\documentclass[11pt]{amsart}
\usepackage{amsmath,amssymb}
\usepackage[alphabetic]{amsrefs}
\usepackage{mathrsfs}
\usepackage{graphicx,color}
\usepackage{wrapfig}
\usepackage{calc}
\usepackage{extarrows}
\usepackage{amsthm}
\usepackage{latexsym}
\usepackage{slashed}
\usepackage{framed}
\usepackage{ascmac}
\usepackage[all]{xy}
\usepackage{hyperref}
\usepackage[mathscr]{eucal}


\allowdisplaybreaks[3]

\BibSpec{book}{%
    +{}  {\PrintPrimary}                {transition}
    +{.} { \textit}                     {title}
    +{.} { }                            {part}
    +{:} { \textit}                     {subtitle}
    +{,} { \PrintEdition}               {edition}
    +{}  { \PrintEditorsB}              {editor}
    +{,} { \PrintTranslatorsC}          {translator}
    +{,} { \PrintContributions}         {contribution}
    +{,} { }                            {series}
    +{,} { \voltext}                    {volume}
    +{,} { }                            {publisher}
    +{,} { }                            {organization}
    +{,} { }                            {address}
    +{,} { }                            {status}
    +{,} { \PrintDOI}                   {doi}
    +{,} { \PrintISBNs}                 {isbn}
    +{}  { \parenthesize}               {language}
    +{}  { \PrintTranslation}           {translation}
    +{;} { \PrintReprint}               {reprint}
    +{,} { \PrintDate}                  {date}
    +{.} { }                            {note}
    +{.} {}                             {transition}
}
\BibSpec{article}{%
    +{}  {\PrintAuthors}                {author}
    +{,} { \textit}                     {title}
    +{.} { }                            {part}
    +{:} { \textit}                     {subtitle}
    +{,} { \PrintContributions}         {contribution}
    +{.} { \PrintPartials}              {partial}
    +{,} { }                            {journal}
    +{}  { \textbf}                     {volume}
    +{}  { \PrintDatePV}                {date}
    +{,} { \issuetext}                  {number}
    +{,} { \eprintpages}                {pages}
    +{,} { }                            {status}
    +{,} { \PrintDOI}                   {doi}
    +{}  { \parenthesize}               {language}
    +{}  { \PrintTranslation}           {translation}
    +{;} { \PrintReprint}               {reprint}
    +{.} { }                            {note}
    +{,} { \eprint}                     {eprint}
    +{.} {}                             {transition}
}
\BibSpec{collection.article}{%
    +{}  {\PrintAuthors}                {author}
    +{,} { \textit}                     {title}
    +{.} { }                            {part}
    +{:} { \textit}                     {subtitle}
    +{,} { \PrintContributions}         {contribution}
    +{,} { \PrintConference}            {conference}
    +{}  {\PrintBook}                   {book}
    +{,} { }                            {booktitle}
    +{,} { \PrintDateB}                 {date}
    +{,} { pp.~}                        {pages}
    +{,} { }                            {publisher}
    +{,} { }                            {organization}
    +{,} { }                            {address}
    +{,} { }                            {status}
    +{,} { \PrintDOI}                   {doi}
    +{,} { \eprint}        {eprint}
    +{}  { \parenthesize}               {language}
    +{}  { \PrintTranslation}           {translation}
    +{;} { \PrintReprint}               {reprint}
    +{.} { }                            {note}
    +{.} {}                             {transition}
}

\BibSpec{misc}{
  +{}{\PrintAuthors}  {author}
  +{,}{ \textit}      {title}
  +{,}{ }             {note}
  +{,}{ }             {date}
  +{.}{}              {transition}
}


\makeatletter
\@addtoreset{equation}{section}
\makeatother

\DeclareMathAlphabet{\mathfrak}{U}{euf}{m}{n}
\SetMathAlphabet{\mathfrak}{bold}{U}{euf}{b}{n}


\newcommand{\ma}[1]{\begin{align*} #1 \end{align*}}
\newcommand{\maa}[1]{\begin{align} #1 \end{align}}

\theoremstyle{definition}
\newtheorem{defn}[equation]{Definition}
\theoremstyle{plain}
\newtheorem{thm}[equation]{Theorem}
\newtheorem{prp}[equation]{Proposition}
\newtheorem{lem}[equation]{Lemma}
\newtheorem{cor}[equation]{Corollary}

\theoremstyle{remark}
\newtheorem{remk}[equation]{Remark}
\newtheorem{exmp}[equation]{Example}
\newtheorem*{remk*}{Remark}


\newcommand{\real}{\mathbb R}
\newcommand{\comp}{\mathbb C}
\newcommand{\zahl}{\mathbb Z}

\newcommand{\nat}{\mathbb N}


\DeclareMathOperator{\Hom}{Hom}

\DeclareMathOperator{\Tor}{Tor}

\newcommand{\id}{\mathrm{id}}
\DeclareMathOperator{\rank}{rank}
\DeclareMathOperator{\Ker}{Ker}

\newcommand{\ebk}[1]{\left<  #1 \right>}

\newcommand{\ssbk}[1]{\left\| #1 \right\|}



\newcommand{\xra}{\xrightarrow}


\newcommand{\loc}{\mathrm{loc}}

\DeclareMathOperator{\image}{Im}
\renewcommand{\Im}{\image}
\DeclareMathOperator{\Ind}{Ind}
\DeclareMathOperator{\Res}{Res}
\newcommand{\cone}{\mathcal{C}}
\DeclareMathOperator*{\bigast}{\text{\scalebox{1.5}{\raisebox{-0.2ex}{$\ast$}}}}

\newcommand{\GCsep}[1]{#1\mathchar`-\mathfrak{C}^*\mathfrak{sep}}
\newcommand{\ev}{\mathrm{ev}}

\newcommand{\Bad}{\mathsf{Bad}}
\renewcommand{\blank}{\text{\textvisiblespace}}
\DeclareMathOperator{\Tel}{Tel}
\newcommand{\Pro}{\mathrm{Pro}}
\newcommand{\Tcat}{\mathfrak{T}}

\newcommand{\Cat}{\mathfrak{C}}
\newcommand{\Ncat}{\mathfrak{N}}
\newcommand{\Icat}{\mathfrak{I}}

\newcommand{\J}{\mathfrak{J}}
\newcommand{\K}{\mathrm{K}}
\newcommand{\KK}{\mathrm{KK}}
\newcommand{\RKK}{\mathrm{RKK}}
\newcommand{\Kas}{\mathfrak{KK}}
\newcommand{\Cst}{\mathrm{C}^*}
\newcommand{\Kop}{\mathbb{K}}

\newcommand{\Lop}{\mathbb{L}}
\newcommand{\FC}{\mathcal{FC}}
\newcommand{\FI}{\mathcal{FI}}
\newcommand{\CI}{\mathcal{CI}}
\newcommand{\CC}{\mathcal{CC}}
\newcommand{\CNI}{\mathcal{C}_{N}\mathcal{I}}
\newcommand{\CNC}{\mathcal{C}_{N}\mathcal{C}}
\newcommand{\Ab}{\mathfrak{Ab}}
\newcommand{\F}{\mathcal{F}}
\newcommand{\Hilb}{\mathscr{H}}
\newcommand{\hoprojlim}{\mathrm{ho}\mathchar`-\varprojlim}
\newcommand{\hoinjlim}{\mathrm{ho}\mathchar`-\varinjlim}
\DeclareMathOperator{\Obj}{Obj}

\DeclareMathOperator{\diag}{diag}

\DeclareMathOperator{\Ad}{Ad}

\newcommand{\Z}{\mathcal{Z}}
\label{thm:Rokinj}



\newcommand{\myhat}{
\scalebox{1.1}[1.1]{\text{\hspace{-0.1em}\raisebox{-0.9ex}[0.8ex][0.0ex]{\textasciicircum}\hspace{-0.1em}}}
}
\newcommand{\FIN}{\mathcal{E}}
\newcommand{\FINe}{\mathcal{E}_{0}}
\newcommand{\FZ}{\mathcal{EZ}}
\newcommand{\G}{\mathcal{G}}
\newcommand{\cyc}{\mathrm{cyc}}
\newcommand{\CZ}{\mathcal{CZ}}
\newcommand{\CZC}{\mathcal{CZC}}
\newcommand{\CZI}{\mathcal{CZI}}

\setcounter{tocdepth}{1}

\title[A categorical perspective on the Atiyah-Segal theorem]{A categorical perspective on the Atiyah-Segal completion theorem in $\KK$-theory}
\date{November 17, 2015.}

\author[Y. Arano]{Yuki Arano}
\address{Graduate School of Mathematical Science, The University of Tokyo, 3-8-1 Komaba, Meguro-ku, Tokyo 153-8914, Japan}
\email{arano@ms.u-tokyo.ac.jp}

\author[Y. Kubota]{Yosuke Kubota}
\address{Graduate School of Mathematical Science, The University of Tokyo, 3-8-1 Komaba, Meguro-ku, Tokyo 153-8914, Japan}
\email{ykubota@ms.u-tokyo.ac.jp}

\subjclass[2010]{Primary 19K35; Secondary 18E30, 19L47, 46L80.}

\begin{document}
\maketitle
\begin{abstract}
We investigate the homological ideal $\J _G^H$, the kernel of the restriction functors in compact Lie group equivariant Kasparov categories. Applying the relative homological algebra developed by Meyer and Nest, we relate the Atiyah-Segal completion theorem with the comparison of $\J_G^H$ with the augmentation ideal of the representation ring. 

In relation to it, we study on the Atiyah-Segal completion theorem for groupoid equivariant $\KK$-theory, McClure's restriction map theorem and permanence property of the Baum-Connes conjecture under extensions of groups.
\end{abstract}

\tableofcontents

\section{Introduction}
Equivariant $\KK$-theory is one of the main subjects in the noncommutative topology, which deals with topological properties of $\Cst$-algebras. The main subject of this paper is the homological ideal
$$\J _G^H(A,B):=\Ker (\Res _G^H : \KK^G(A,B) \to \KK^H(A,B))$$
of the Kasparov category $\Kas^G$, whose objects are separable $G$-$\Cst$-algebras, morphisms are equivariant $\KK$-groups and composition is given by the Kasparov product. 

In \cite{MR2193334}, Meyer and Nest introduced a new approach to study the homological algebra of the Kasparov category. They observed that the Kasparov category has a canonical structure of the triangulated category. Moreover, they applied the Verdier localization for $\Kas ^G$ in order to give a categorical formulation of the Baum-Connes assembly map. Actually they prove that an analogue of the simplicial approximation in the Kasparov category is naturally isomorphic to the assembly map. Their argument is refined in \cite{MR2563811} in terms of relative homological algebra of the projective class developed by Christensen~\cite{MR1626856}. Moreover it is proved that the ABC spectral sequence (a generalization of Adams spectral sequence in relative homological algebra) for the functor $\K _*(G \ltimes \blank)$ and an object $A$ converges to the domain of the assembly map.

These results are essentially based on the fact that the induction functor $\Ind _{H}^G$ is the left adjoint of the restriction functor $\Res _G^H$ when $H \leq G$ is an open subgroup. On the other hand, it is also known that when $H \leq G$ is a cocompact subgroup, $\Ind _H^G$ is the right adjoint of $\Res _G^H$. This relation enables us to apply the homological algebra of injective class for $\KK$-theory. It should be noted that the category of separable $G$-$\Cst$-algebras is not closed under countable direct product although the fact that $\Kas ^G$ have countable direct sums plays an essential role in \citelist{\cite{MR2193334}\cite{MR2681710}\cite{MR2563811}}. Therefore, we replace the category $\GCsep{G}$ of separable $G$-$\Cst$-algebras with its (countable) pro-category. Actually, the category $\mathop{\mathrm{Pro}}_\nat \GCsep{G}$ is naturally equivalent to the category $\sigma\GCsep{G}$ of $\sigma$-$G$-$\Cst$-algebras, which is dealt with by Phillips in his study of the Atiyah-Segal completion theorem. Fortunately, $\KK$-theory for (non-equivariant) $\sigma$-$\Cst$-algebras are investigated by Bonkat~\cite{Bonkat}. We check that his definition is generalized for equivariant $\KK$-theory and obtain the following theorem.

\theoremstyle{plain}
\newtheorem*{intro1}{Theorem \ref{thm:triang} and Theorem \ref{thm:decomp}}
\begin{intro1}
For a compact group $G$, the equivariant Kasparov category $\sigma \Kas^G$ of $\sigma$-$G$-$\Cst$-algebras has a structure of the triangulated category. Moreover, for a family $\F$ of $G$, the pair of thick subcategories $(\FC,\ebk{\FI}^{\loc})$ is complementary. Here $\FC$ is the full subcategory of $\F$-contractible objects and $\FI$ is the class of $\F$-induced objects (see Definition \ref{def:FindFres}).
\end{intro1}

Next, we observe that this semi-orthogonal decomposition is related to a classical idea in equivariant $\K$-theory so called the Atiyah-Segal completion.
In the theory of equivariant cohomology, there is a canonical way to construct an equivariant general cohomology theory from a non-equivariant cohomology theory. Actually, for a compact Lie group $G$ and a $G$-$CW$-complex $X$, the general cohomology group of the new space given by the Borel construction $X \times _G EG$ is regarded as the equivariant version of the given cohomology group of $X$. On the other hand, equivariant $\K$-theory is defined in terms of equivariant vector bundles by Atiyah and Segal in \cite{MR0236951} \cite{MR0234452}. This group has a structure of modules over the representation ring $R(G)$ and hence is related to the representation theory of compact Lie groups. In 1969, Atiyah and Segal discovered a beautiful relation between them \cite{MR0259946}. When the equivariant $\K$-group $\K_G^*(X)$ of a compact $G$-space is finitely generated as an $R(G)$-module, then the completion of the equivariant $\K$-group by the augmentation ideal is actually isomorphic to the (representable) $\K$-group pf the Borel construction of $X$. 

This theorem is generalized in \cite{MR935523} for families of subgroups. The completion of $\K_G^*(X)$ by the family of ideals $I_G^H$ ($H \in \F$) is isomorphic to the equivariant $\K$-group $\K_G(X \times E_\F G)$ where $E_\F G$ is the universal $\F$-free $G$-space. On the other hand, Phillips~\cite{MR1050491} generalizes it for $\K$-theory of $\Cst$-algebras. In order to formulate the statement, he generalizes operator $\K$-theory for $\sigma$-$\Cst$-algebras in \cite{MR1050490}. Actually, this contains the Atiyah-Segal completion theorem for twisted $\K$-theory because the twisted equivariant $\K$-group is isomorphic to the $\K$-group of certain $\Cst$-algebra bundles with (twisted) group actions.

The Atiyah-Segal completion theorem is generalized for equivariant $\KK$-theory by Uuye~\cite{MR2887199}. Here he assumes that $\KK^H_*(A,B)$ are finitely generated for all subgroups $H$ of $G$ in order to regard the correspondence $X \mapsto \KK^G(A , B \otimes C(X))$ as an equivariant cohomology theory of finite type. We prove the categorical counterpart of the Atiyah-Segal completion theorem under weaker assumptions. 
\newtheorem*{intro2}{Theorem \ref{thm:IG2}}
\begin{intro2}
Let $G$ be a compact Lie group and let $A$, $B$ be $\sigma$-$\Cst$-algebras such that $\KK^G_*(A,B)$ are finitely generated for $\ast=0,1$. Then the filtrations $(\J_G^\F)^*(A,B)$ and $(I_G^\F)^* \KK^G(A,B)$ are equivalent.
\end{intro2}

Applying Theorem \ref{thm:IG2} for the relative homological algebra of the injective class, we obtain the following generalization of the Atiyah-Segal completion theorem. 
\newtheorem*{intro3}{Theorem \ref{thm:AS}}
\begin{intro3}
When $\KK^G(A,B)$ is a finitely generated $R(G)$-module, the following $R(G)$-modules are canonically isomorphic.
$$\KK^G(A,B)^{{\myhat}}_{I_G^\F } \cong \KK^G(A,\tilde{B}) \cong \RKK^G(E_\F G; A,B) \cong \sigma\Kas^G/\FC(A,B)$$
\end{intro3}
The Atiyah-Segal completion theorem for proper actions and groupoids are studied in \cite{MR1838997} and \cite{MR2955971}. We generalize Theorem \ref{thm:AS} for groupoid equivariant $\KK$-theory (Theorem \ref{thm:groupoid}) and equivariant $\KK$-theory for proper $G$-$\Cst$-algebras (Theorem \ref{thm:proper}) under certain assumptions.

Note that in some special cases we need not to assume that $\KK^G_*(A,B)$ are finitely generated. In particular, we obtain the following.

\newtheorem*{intro4}{Corollary \ref{cor:cyc}}
\begin{intro4}
Let $\Z$ be the family generated by all cyclic subgroups of $G$. Then, there is $n>0$ such that $(\J_G^\Z)^n=0$. 
\end{intro4}

It immediately follows from Corollary \ref{cor:cyc} that if $\Res _G^H A$ is $\KK^H$-contractible for any cyclic subgroup $H$ of $G$, then $A$ is $\KK^G$-contractible. This is a variation of McClure's restriction map theorem~\cite{MR862427} which is generalized by Uuye~\cite{MR2887199} for equivariant $\KK$-theory. We also revisit these theorems from categorical viewpoint and generalize Theorem 0.1 of \cite{MR2887199} (Theorem \ref{thm:rest}).

Next we apply Corollary \ref{cor:cyc} for the study of the complementary pair $(\ebk{\CI}_\loc ,\CC)$ of the Kasparov category $\sigma \Kas ^G$ and the Baum-Connes conjecture (BCC). Oue main interest here is permanence property of the BCC under group extensions, which is studied by Chabert, Echterhoff and Oyono-Oyono in \cite{MR1817505}, \cite{MR1857079} and \cite{MR1836047} with the use of the partial assembly map. Let $1 \to N \to G \xra{\pi} G/N \to 1$ be an extension of groups. It is proved in Corollary 3.4 of \cite{MR1836047} and Theorem 10.5 of \cite{MR2193334} that if $G/N$ and $\pi ^{-1}(F)$ for any compact subgroup $F$ of $G/N$ satisfy the (resp.\ strong) BCC, then so does $G$. Here, the assumption that $\pi ^{-1}(F)$ satisfy the BCC is related to the fact that the assmebly map is defined in terms of the complementary pair $(\ebk{\CI}_\loc ,\CC)$ (this assumption is refined by Schick~\cite{MR2350467} when $G$ is discrete, $H$ is cohomologically complete and has enough torsion-free amenable quotients by group-theoretic arguments). On the other hand, Corollary \ref{cor:cyc} implies that the subcategories $\CC$ and $\CZC$ coincide in $\sigma \Kas ^G$. As a consequence we refine their results as following.

\newtheorem*{intro5}{Theorem \ref{thm:BC}}
\begin{intro5}
Let $1 \to N \to G \to Q \to 1$ be an extension of second countable groups such that all compact subgroups of $Q$ are Lie groups and let $A$ be a $G$-$\Cst$-algebra. Then the following holds:
\begin{enumerate}
\item If $\pi^{-1}(H)$ satisfies the (resp.\ strong) BCC for $A$ for any compact cyclic subgroup $H$ of $Q$, then $G$ satisfies the (resp.\ strong) BCC for $A$ if and only if $Q$ satisfies the (resp.\ strong) BCC for $N \ltimes ^{\mathrm{PR}}_rA$.
\item If $\pi^{-1}(H)$ and $Q$ have the $\gamma$-element for any compact cyclic subgroup $H$ of $Q$, then so does $G$. Moreover, in that case $\gamma _{\pi^{-1}(H)}=1$ and $\gamma _Q=1$ if and only if $\gamma _G=1$.
\end{enumerate}
\end{intro5}

This paper is organized as follows. In Section \ref{section:2}, we briefly summarize terminologies and basic facts on the relative homological algebra of triangulated categories.  In Section \ref{section:3}, we study the relative homological algebra of the injective class in the Kasparov category and prove the Atiyah-Segal completion theorem in $\KK$-theory. Section \ref{section:4} - \ref{section:6} are mutually independent. In Section \ref{section:4}, we study on the restriction map in $\KK$-theory. In Section \ref{section:5} we generalize the Atiyah-Segal completion theorem for groupoid equivariant case. In Section \ref{section:6}, we discuss on permanence property of the Baum-Connes conjecture under extensions of groups. In Appendix \ref{section:App}, we survey definitions and some basic properties of equivariant $\KK$-theory for $\sigma$-$\Cst$-algebras.

\section{Preliminaries in the relative homological algebra}\label{section:2}
In this section we briefly summarize some terminologies and basic facts on the relative homological algebra of triangulated categories. The readers can find more details in \cite{MR2681710} and \cite{MR2563811}. We modify a part of the theory in order to deal with the relative homological algebra of the injective class for countable families of homological ideals. 

A triangulated category is an additive category together with the category automorphism $\Sigma$ so called the suspension and the class of triangles (a sequence $A \xra{f} B \xra{g} C \xra{h} \Sigma A$ such that $g\circ f=h\circ g=\Sigma f \circ h=0$) which satisfies axioms [TR0]-[TR4] (see Chapter 1 of \cite{MR1812507}). We often write an exact triangle $A \to B \to C \to \Sigma A$ as
\[
\xymatrix@R=1.73em@C=1em{A \ar[rr] &&B \ar[ld] \\
&C. \ar[lu]|\circ &}
\]
Here the symbol $\xymatrix@C=1.5em{A \ar[r] |\circ &B}$ represents a morphism from $A$ to $\Sigma B$. 

Let $\Tcat$ be a triangulated category. An \textit{ideal} $\J$ of $\Tcat$ is a family of subgroups $\J (A,B)$ of $\Tcat (A,B)$ such that $\Tcat(A,B)\circ \J(B,C) \circ \Tcat(C,D) \subset \J(A,D)$. A typical example is the kernel of an additive functor $F : \Tcat \to \mathfrak{A}$. We say that an ideal is a \textit{homological ideal} if it is the kernel of a stable homological functor from $\Tcat$ to an abelian category $\mathfrak{A}$ with the suspension automorphism. Here a covariant functor $F$ is homological if $F(A) \to F(B) \to F(C)$ is exact for any exact triangle $A \to B \to C \to \Sigma A$ and stable if $F \circ \Sigma =\Sigma \circ F$. Note that the kernel of an exact functor between triangulated categories is a homological ideal by Proposition 2.22 of \cite{MR2681710}. 

For a homological ideal $\J$ of $\Tcat$, an object $A$ is \textit{$\J$-contractible} if $\id _A$ is in $\J$ and is \textit{$\J$-injective} if $f_*:\Tcat (A,B) \to \Tcat (A,D)$ is zero for any $f \in \J (B,D)$. The triangulated category $\Tcat$ \textit{has enough $\J$-injectives} if for any object $A \in \Obj \Tcat$ there is a $\J$-injective object $I$ and a $\J$-monic morphism $A \to I$ i.e.\ the morphism $\iota$ is the exact triangle $N \xra{\iota} A \to I \to \Sigma N$ is in $\J$. Note that the morphism $\iota$ is \textit{$\J$-coversal}, that is, an arbitrary morphism $f:B \to A$ in $\J$ factors through $\iota$ (see Lemma 3.5 of \cite{MR2563811}). 

More generally, we consider the above homological algebra for a countable family $\J=\{ \J_k \}_{k \in \nat }$ of homological ideals of $\Tcat$. For example, we say an object $A$ is $\J$-contractible if $A$ is $\J_k$-contractible for any $k\in \nat$. 
\begin{defn}\label{def:filt}
A \textit{filtration} associated to $\J$ is a filtration of the morphism sets of $\Tcat$ coming from the composition of ideals $\{ \J_{i_1} \circ \J _{i_2} \circ \cdots \circ \J _{i_r} \} _{r \in \zahl _{>0}}$ where $i_1,i_2,\dots $ is a sequence of positive integers such that each $k \in \nat$ arises infinitely many times. 
\end{defn}
Note that two filtrations associated to $\J$ are equivalent. For simplicity of notation, we use the notation $\J ^r$ for the $r$-th component of a (fixed) filtration associated to $\J$ unless otherwise noted.

The relative homological algebra is related to the complementary pairs (or semi-orthogonal decompositions) of the triangulated categories. For a thick triangulated subcategory $\Cat$ of $\Tcat$ (Definition 1.5.1 and Definition 2.1.6 of \cite{MR1812507}), there is a natural way to obtain a new triangulated category $\Tcat /\Cat$ so called the Verdier localization (see Section 2.1 of \cite{MR1812507}). A pair $(\Ncat, \Icat)$ is a \textit{complementary pair} if $\Tcat (N,I)=0$ for any $N \in \Obj \Ncat$, $I \in \Obj \Icat$ and for any $A \in \Obj \Tcat$ there is an exact triangle $N_A \to A \to I_A \to \Sigma N_A$ such that $N_A \in \Obj \Ncat$ and $I_A \in \Obj \Icat$. Actually, such an exact triangle is unique up to isomorphism for each $A$ and there are functors $N: \Tcat \to \Ncat$ and $I:\Tcat \to \Icat$ that maps $A$ to $N_A$ and $I_A$ respectively. We say that $N$ (resp.\ $I$) the \textit{left} (resp.\ \textit{right}) \textit{approximation functor} with respect to the complementary pair $(\Ncat, \Icat)$. These functors induces the category equivalence $I:\Tcat /\Ncat \to \Icat$ and $N:\Tcat /\Icat \to \Ncat$.

Moreover we assume that a triangulated category $\Tcat$ admits countable direct sums and direct products. A thick triangulated subcategory of $\Tcat$ is \textit{colocalizing} (resp.\ \textit{localizing}) if it is closed under countable direct products (resp.\ direct sums). For a class $\mathcal{C}$ of objects in $\Tcat$, let $\ebk{\mathcal{C}}^{\loc}$ (resp.\ $\ebk{\mathcal{C}}_{\loc}$) denote the smallest colocalizing (resp.\ localizing) thick triangulated subcategory which includes all objects in $\mathcal{C}$. We say that an ideal $\J$ is \textit{compatible with countable direct products} if the canonical isomorphism $\Tcat (A, \prod B_n) \cong \prod \Tcat(A, B_n)$ restricts to $\J (A, \prod B_n) \cong \prod \J(A, B_n)$.

We write $\Ncat _\J$ for the thick subcategory of objects which is $\J _k$-contractible for any $k$. If each $\J_k$ is compatible with countable direct products, $\Ncat _\J$ is colocalizing. We write $\mathfrak{I}_\J$ for the class of $\J_k$-injective objects for some $k$.
\begin{thm}[Theorem 3.21 of \cite{MR2563811}]\label{thm:homideal}
Let $\Tcat$ be a triangulated category with countable direct product and let $\J =\{ \J_i \}$ be a family of homological ideals with enough $\J _i$-injective objects which are compatible with countable direct products. Then, the pair $(\Ncat _{\J}, \ebk{\mathfrak{I}_\J}^{\loc})$ is complementary. 
\end{thm}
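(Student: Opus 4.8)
The plan is to verify the two defining properties of a complementary pair: the orthogonality $\Tcat(N,I)=0$ for $N\in\Ncat_\J$ and $I\in\ebk{\mathfrak{I}_\J}^{\loc}$, and the existence, for every $A\in\Obj\Tcat$, of an exact triangle $N_A\to A\to I_A\to\Sigma N_A$ with $N_A\in\Ncat_\J$ and $I_A\in\ebk{\mathfrak{I}_\J}^{\loc}$. First I would dispose of orthogonality. Fix $N\in\Ncat_\J$ and let $\mathcal D_N$ be the full subcategory of objects $Y$ with $\Tcat(N,\Sigma^n Y)=0$ for all $n\in\zahl$. Since $\Tcat(N,\blank)$ is homological and turns countable products into products, and $\Sigma$ commutes with products, $\mathcal D_N$ is closed under suspensions, countable products, and cofibers, hence is a colocalizing thick subcategory. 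It contains $\mathfrak{I}_\J$: if $I$ is $\J_k$-injective then so is each $\Sigma^n I$, and any $g\colon N\to\Sigma^n I$ can be written $g=g\circ\id_N$ with $\id_N\in\J_k$, so $g\in\J_k(N,\Sigma^n I)$, which vanishes by $\J_k$-injectivity. Therefore $\mathcal D_N\supseteq\ebk{\mathfrak{I}_\J}^{\loc}$, which is precisely the orthogonality.

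Next I would build the approximation triangle. Fix a filtration sequence $i_1,i_2,\dots$ in which every $k\in\nat$ occurs infinitely often, and construct a tower $\cdots\to N_2\xrightarrow{\iota_2}N_1\xrightarrow{\iota_1}N_0=A$ inductively: given $N_{r-1}$, the hypothesis that $\Tcat$ has enough $\J_{i_r}$-injectives supplies an exact triangle $N_r\xrightarrow{\iota_r}N_{r-1}\to I_{r-1}\to\Sigma N_r$ with $I_{r-1}$ a $\J_{i_r}$-injective object (so $I_{r-1}\in\mathfrak{I}_\J$) and $\iota_r\in\J_{i_r}$. Set $N_A:=\hoprojlim_r N_r$, with canonical map $q\colon N_A\to A$, and let $I_A$ be the cofiber of $q$. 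Writing $C_r$ for the cofiber of the composite $N_r\to A$, the octahedral axiom applied to $N_r\xrightarrow{\iota_r}N_{r-1}\to A$ yields triangles $I_{r-1}\to C_r\to C_{r-1}\to\Sigma I_{r-1}$; since $C_0=0$, induction shows every $C_r$ lies in the thick subcategory generated by $\mathfrak{I}_\J$. As homotopy limits of towers preserve levelwise triangles and $A$ is constant, $I_A\simeq\hoprojlim_r C_r$, and the Milnor triangle $\hoprojlim_r C_r\to\prod_r C_r\to\prod_r C_r\to\Sigma\hoprojlim_r C_r$, together with the closure of $\ebk{\mathfrak{I}_\J}^{\loc}$ under countable products, places $I_A$ in $\ebk{\mathfrak{I}_\J}^{\loc}$.

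The heart of the argument, and the step I expect to be the main obstacle, is showing $N_A\in\Ncat_\J$, i.e.\ that $\id_{N_A}\in\J_k$ for every $k$. (Note that the composite $q\colon N_A\to A$ lies in $\bigcap_r\J^r$ and hence in $\J_k$, but this concerns $q$, not $\id_{N_A}$, so more is needed.) Fix $k$. Since the indices $r$ with $i_r=k$ are cofinal, passing to the corresponding subtower alters neither $N_A$ nor its structure maps, so I may assume every transition map $\iota_r$ lies in $\J_k$; then the canonical maps $p_r\colon N_A\to N_r$ satisfy $p_{r-1}=\iota_r\circ p_r\in\J_k$. Because $\J_k$ is compatible with countable products, the map $\mu\colon N_A\to\prod_r N_r$ with components $p_r$ lies in $\J_k(N_A,\prod_r N_r)\cong\prod_r\J_k(N_A,N_r)$. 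I would then feed this into the Milnor exact sequence for maps into a homotopy limit,
\[
0\to{\varprojlim_r}^1\Tcat(N_A,\Sigma^{-1}N_r)\to\Tcat(N_A,N_A)\to\varprojlim_r\Tcat(N_A,N_r)\to 0,
\]
under which $\id_{N_A}$ maps to the compatible system $(p_r)_r$. This system lies in the subgroup $\varprojlim_r\J_k(N_A,N_r)$, and compatibility with products lets me carry out the same $\varprojlim$–$\varprojlim^1$ analysis for the subfunctor $\J_k(N_A,\blank)$; comparing the two presentations shows that $(p_r)_r$ lifts to an element of $\J_k(N_A,N_A)$ while the residual $\varprojlim^1$–contribution is again represented by $\J_k$–classes, whence $\id_{N_A}\in\J_k(N_A,N_A)$.

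The delicate point in the last step is the control of the $\varprojlim^1$–term: unlike $\Tcat(N_A,\blank)$, the subfunctor $\J_k(N_A,\blank)$ need not be homological, so one cannot simply apply it to the Milnor triangle and must instead argue on the level of the explicit $1-\mathrm{shift}$ presentation of $\varprojlim$ and $\varprojlim^1$, using compatibility with countable products to identify the relevant kernels and cokernels inside those of $\Tcat(N_A,\blank)$. Once $N_A\in\Ncat_\J$ is established, the triangle $N_A\xrightarrow{q}A\to I_A\to\Sigma N_A$ is the required approximation triangle, and together with the orthogonality above this shows that $(\Ncat_\J,\ebk{\mathfrak{I}_\J}^{\loc})$ is complementary. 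Finally, it is again the hypothesis that each $\J_k$ is compatible with countable products that guarantees, as recalled before the statement, that $\Ncat_\J$ is genuinely colocalizing, so that it is a legitimate half of a complementary pair.
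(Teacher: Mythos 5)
Your orthogonality argument is correct, and your construction of the triangle $N_A \to A \to I_A \to \Sigma N_A$ is essentially the paper's own: your tower is its phantom tower, your cofibers $C_r$ are its phantom castle, and the passage to homotopy limits via the Milnor triangle is identical. The genuine gap is exactly at the step you flag as the heart of the matter, the proof that $N_A \in \Ncat_\J$. The $\varprojlim$--$\varprojlim^1$ comparison you propose does not close. What you need is the implication: if $p_r \circ f \in \J_k(N_A,N_r)$ for all $r$, then $f \in \J_k(N_A,N_A)$, applied to $f=\id_{N_A}$. Your mechanism is to lift the compatible family $(p_r)_r \in \varprojlim_r \J_k(N_A,N_r)$ to an element of $\J_k(N_A,N_A)$ and control the $\varprojlim^1$ ambiguity. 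But $\J_k(N_A,\blank)$ is not half-exact, so the exactness that produces lifts for $\Tcat(N_A,\blank)$ --- namely $\Ker\bigl((1-S)_*\bigr) = \Im \mu_*$, coming from the long exact sequence of the Milnor triangle --- is unavailable for the subfunctor: given an element of $\Ker(1-S)_* \cap \prod_r \J_k(N_A,N_r)$, exactness of $\Tcat(N_A,\blank)$ produces a lift $f \in \Tcat(N_A,N_A)$, but nothing in your argument shows that this lift, or any other one, lies in $\J_k$; all lifts differ by elements of $\Im w_*$, where $w \colon \Sigma^{-1}\prod N_r \to N_A$ is the connecting map, so either every lift lies in $\J_k$ or none does, and deciding which is precisely the statement being proven. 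The telltale sign of the circularity is that your proof never uses that the $\J_k$ are \emph{homological} ideals; only the ideal axioms and compatibility with products appear, and these alone do not carry the step.

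The missing idea is to apply the defining functor to the Milnor triangle. Write $\J_k = \Ker F_k$ with $F_k$ a stable homological functor. After your (correct) cofinality reduction every $\iota_r$ lies in $\J_k$, so compatibility with countable products gives $S \in \J_k\bigl(\prod_r N_r, \prod_r N_r\bigr)$, since each component $q_r \circ S = \iota_{r+1}\circ q_{r+1}$ lies in $\J_k$. Hence $F_k(S)=0$, so $F_k(1-S)=\id$ is an isomorphism, and likewise after desuspension. The long exact sequence of $F_k$ applied to the exact triangle $\Sigma^{-1}\prod N_r \xrightarrow{w} N_A \xrightarrow{\mu} \prod N_r \xrightarrow{1-S} \prod N_r$ then forces $F_k(N_A)=0$, i.e.\ $\id_{N_A} \in \J_k(N_A,N_A)$, and this holds for every $k$. (Equivalently, and closer in spirit to your computation: $(1-S)\circ \mu = 0$ gives $\mu = S\circ\mu \in \J_k$, and $w\circ\Sigma^{-1}(1-S)=0$ gives $w = w \circ \Sigma^{-1}S \in \J_k$; two consecutive morphisms of an exact triangle lying in $\Ker F_k$ force the middle object to be $F_k$-acyclic.) This is the argument in the reference the paper cites for this theorem; with it in place of your $\varprojlim$--$\varprojlim^1$ step, the rest of your proof stands.
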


We review the explicit construction of the left and right approximation in Theorem 3.21 of \cite{MR2563811}. We start with the following diagram so called the \textit{phantom tower} for $B$:
\[
\xymatrix@R=1.73em@C=1em{
B \ar@{=}[r]&N_0\ar[rd]_{\pi _0} &&N_1 \ar[ll]_{\iota _0^1} \ar[rd]_{\pi_1} &&N_2 \ar[ll]_{\iota _1^2} \ar[rd]_{\pi _2} &&N_3 \ar[ll]_{\iota _2^3} \ar[rd]_{\pi _3} &&N_4 \ar[ll]_{\iota _3^4} \ar[rd]_{\pi _4} &&\cdots \ar[ll]& \\
&& I_0 \ar[ru]|\circ_{\varepsilon _0} \ar[rr]|\circ _{\delta_1} && I_1 \ar[ru]|\circ _{\varepsilon _1} \ar[rr]|\circ _{\delta _2}  && I_2 \ar[ru]|\circ _{\varepsilon_2} \ar[rr]|\circ _{\delta_2} &&  I_3 \ar[ru]|\circ_{\varepsilon _3} \ar[rr]|\circ_{\delta _3}&& \cdots  \\
}
\]
where $\iota _k^{k+1}$ are in $\J _{i_k}$ and $I_k$ are $\J _{i_k}$-injective (here $\{ i_k \}_{k \in \nat}$ is the same as in Definition \ref{def:filt}). There exists such a diagram for any $B$ since $\Tcat$ has enough $\J$-injectives. We write $\iota _k^l$ for the composition $\iota _{l-1} ^l \circ \iota _{l-2}^{l-1} \circ \cdots \circ \iota _{k}^{k+1}$. Since each $\iota _k^{k+1}$ is $\J_{i_k}$-coversal, we obtain $\J^p(A,B)=\Im (\iota _0^p)_*$ for any $A$.

Next we extend this diagram to the \textit{phantom castle}. Due to the axiom [TR1], there is a (unique) object $\tilde{B}_p$ in $\Tcat$ and an exact triangle $N_p \to B \to \tilde{B}_p \to \Sigma N_p$ for each $p$. By the axiom [TR4], we can complete the following diagram by dotted morphisms
\[
\xymatrix@R=1.73em@C=1em{B \ar[rd] && N_{p-1} \ar[ll] \ar[rd] && N_p \ar[ll]\\
&\tilde{B}_{p-1} \ar[ru]|\circ \ar[rr]|\circ && I_p \ar[ru] \ar@{.>}[ld]&\\
&&\tilde{B}_p. \ar@{.>}[lu] &&}
\]
and hence $\tilde{B}_p$ is $\J^p$-injective. Moreover, we obtain a projective system
\[
\xymatrix@R=0.86em@C=1em{
&N_1 \ar[ld]&&N_2 \ar[ld] \ar[ll] &&N_3 \ar[ld] \ar[ll] &&N_4 \ar[ld] \ar[ll] &&N_5 \ar[ld] \ar[ll] && \cdots \ar[ll] \\
B\ar[rd]  && B\ar[rd] \ar[ll]|\hole && B\ar[rd] \ar[ll]|\hole && B\ar[rd] \ar[ll]|\hole && B\ar[rd] \ar[ll]|\hole && \cdots \ar[ll]|\hole &  \\
& \tilde{B}_1 \ar[uu]|\circ&& \tilde{B}_2 \ar[uu]|\circ \ar[ll]&& \tilde{B}_3 \ar[uu]|\circ \ar[ll]&& \tilde{B}_4 \ar[uu]|\circ \ar[ll]&& \tilde{B}_5 \ar[uu]|\circ \ar[ll]&& \cdots \ar[ll]
}
\]
of exact triangles. Now we take the homotopy projective limit $\tilde{B}:=\hoprojlim _p \tilde{B}_p$ and $N:=\hoprojlim N_p$. Here the homotopy projective limit of a projective system $(B,\varphi _m^{m+1})$ is the 
third part of the exact triangle 
$$\Sigma^{-1} \prod B_p \to \hoprojlim B_p \to \prod B_p \xra{\id -S} \prod B_p$$
where $S :=\prod \varphi _m^{m+1}$. Then, the axiom [TR4] implies that the homotopy projective limit $N \to B \to \tilde{B} \to \Sigma N$ of the projective system of exact triangles is also exact. In fact, it can be checked that $\tilde{B}$ is in $\ebk{\Icat _{\J}}^\loc$ and $N$ is in $\Ncat _{\J}$ and hence $N$ and $\tilde{B}$ gives a left and right approximation of $B$.

At the end of this section, we review the ABC spectral sequence, introduced in \cite{MR2563811} and named after Adams, Brinkmann and Christensen. Let $A$ be an object in $\Tcat$, let $\J$ be a countable family of homological ideals with a fixed filtration and let $F: \Tcat \to \Ab$ be a homological functor. Set 
\[
\left\{
\begin{array}{ll}
D=\bigoplus D_{pq},& D_{p,q}:=F_{p+q+1}(N_{p+1}),\\
E=\bigoplus E_{pq},& E_{p,q}:=F_{p+q}(I_p),\\
\end{array}
\right.
\left\{
\begin{array}{ll}
i_{p,q}:=(\iota _p^{p+1})^*&: D_{p,q} \to D_{p+1,q-1},\\
j_{p,q}:=(\varepsilon _p)^*&:D_{p,q} \to E_{p,q+1},\\
k_{p,q}:=(\pi _p)^*&: E_{p,q} \to  D_{p-1,q},
\end{array}
\right.
\]
where $N_{p}=A$ and $I_p=0$ for $p <0$.
Then the triangle
\[
\xymatrix@R=1.73em@C=1em{
D \ar[rr]^i && D \ar[ld]^j \\
&E \ar[lu]^k&
}
\]
forms an exact couple. We call the associated spectral sequence is the \textit{ABC spectral sequence} for $A$ and $F$.

\begin{prp}[Proposition 4.3 of \cite{MR2563811}]\label{prop:ABC}
Let $B$ be an object in $\Tcat$ and let $F$ be a homological functor. Set $D_{pq}^r=D^r_{pq}(B):=i^{r-1}(D_{p+r-1,p-r+1})$ and $E_{pq}^r=E_{pq}^r(B):=k^{-1}(D^r_{pq})/j(\Ker i^r)$. Then the following hold:
\begin{enumerate}
\item 
\[
D^r_{pq}=\left\{
\begin{array}{ll}
\J ^{r-1}F_{p+q+1}(N_p) & \text{if $p \geq 0$}\\
\J ^{p+r-1}F_{p+q+1}(B) & \text{if $-r \leq p \leq 0$}\\
F_{p+q+1}(B) & \text{if $p \leq -r$}
\end{array}
\right.
\]
where $\J^p F(B)$ denotes the subgroup $\{ f_* \xi \mid \xi \in F(A), f \in \J^p (A,B) \}$ of $F(B)$.
\item There is an exact sequence
\ma{0 \to \frac{\J^pF_{p+q+1}(B)}{\J^{p+1}F_{p+q+1}(B)} \to E^\infty_{pq} \to \Bad _{p+1,p+q+1} \xra{i} \Bad _{p,p+q+1}}
where $\Bad _{p,q}(B)=\Bad _{p,q}:=\J ^\infty F_q(N_p)$.
\end{enumerate}
\end{prp}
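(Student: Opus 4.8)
The plan is to deduce both parts from the general mechanics of the derived exact couple, feeding in a single $\KK$-theoretic input: an identification of the powers of the filtration with images of the connecting morphisms $\iota$ of the phantom tower. The one lemma I would isolate first is that for all $p\ge 0$ and $s\ge 0$,
\[
\Im\big((\iota_p^{p+s})_*\colon F_n(N_{p+s})\to F_n(N_p)\big)=\J^s F_n(N_p),
\]
and for $p=0$ the right-hand side is $\J^s F_n(B)$. This is immediate from the facts recorded before the statement: $\iota_p^{p+s}$ is a composite of $s$ successive $\J_{i_k}$-coversal morphisms, hence $\J^s$-coversal, so every $\J^s$-morphism into $N_p$ factors through it; pushing forward by $F$ and invoking $\J^p(A,B)=\Im(\iota_0^p)_*$ gives both inclusions.

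For part (1) I would simply unwind the definition $D^r_{pq}=i^{r-1}(D_{p+r-1,\,q-r+1})$, noting that $i^{r-1}$ is the map induced on $F$ by the relevant composite of $\iota$'s. When $p\ge 0$ that composite reaches from $N_{p+r-1}$ down to $N_p$, so the lemma identifies $D^r_{pq}$ with $\J^{r-1}F_{p+q+1}(N_p)$. For $p<0$ one exploits the convention $N_p=B$, $I_p=0$: the connecting maps below degree $0$ are identities, so of the $r-1$ factors of $i^{r-1}$ exactly $p+r-1$ are genuine $\iota$'s when $-r\le p\le 0$ (and none when $p\le -r$), yielding $\J^{p+r-1}F_{p+q+1}(B)$ and $F_{p+q+1}(B)$ respectively.

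For part (2) I would pass to the limit. Standard exact-couple theory gives $E^\infty_{pq}=Z^\infty/B^\infty$ with $Z^\infty=k^{-1}(D^\infty)$ and $B^\infty=j(\Ker i^\infty)$, where $D^\infty=\bigcap_r D^r$ and $\Ker i^\infty=\bigcup_r\Ker i^r$. By part (1), $D^\infty=\bigcap_r \J^{r-1}F_{p+q+1}(N_\bullet)=\J^\infty F_{p+q+1}(N_\bullet)=\Bad_{p+1,p+q+1}$. Since $kj=0$, the map $k$ descends to $\bar k\colon E^\infty_{pq}\to \Bad_{p+1,p+q+1}$, and because $\Im k=\Ker i$ one computes $\Im\bar k=D^\infty\cap\Im k=\Ker(i\colon\Bad_{p+1,p+q+1}\to\Bad_{p,p+q+1})$; this is the exactness at $\Bad_{p+1,p+q+1}$. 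For the kernel, $\Ker k=\Im j\subseteq Z^\infty$, so $\Ker\bar k=\Im j/B^\infty\cong D/(\Im i+\Ker i^\infty)$, using $\Ker j=\Im i$. Here the lemma gives $\Im i=\J^1 F(N_\bullet)$, while $\Ker i^\infty$ stabilises to $\Ker((\iota_0^p)_*)$ because below degree $0$ the maps are identities, so $i^r=(\iota_0^p)_*$ for all large $r$. The edge map $(\iota_0^p)_*$ then identifies $D/(\Im i+\Ker i^\infty)$ with $\J^p F_{p+q+1}(B)/\J^{p+1}F_{p+q+1}(B)$, which is the asserted injection, completing the four-term sequence.

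The main obstacle is the last identification: it is exactly where the $\Bad$ (i.e.\ $\varprojlim^1$-type) correction enters. One must verify carefully that the boundaries $B^\infty=j(\Ker i^\infty)$ are precisely large enough to cut $\Im j\cong F(N_\bullet)/\J^1F(N_\bullet)$ down to the clean filtration quotient $\J^pF(B)/\J^{p+1}F(B)$ — equivalently, that $\Ker i^\infty=\Ker((\iota_0^p)_*)$ exactly, neither more nor less — and that the limit formulas $Z^\infty=k^{-1}(D^\infty)$, $B^\infty=j(\Ker i^\infty)$ are legitimate (the $E$-pages form a tower of subquotients, so no derived-limit obstruction arises at this step). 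Pinning down the stabilisation of $\Ker i^\infty$ and keeping the tower indices $N_p$ versus $N_{p+1}$ straight, rather than any individual computation, is the delicate point.
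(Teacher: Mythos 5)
Your proof is correct, and it is essentially the argument behind the cited result: the paper does not prove Proposition \ref{prop:ABC} itself but imports it from Proposition 4.3 of \cite{MR2563811}, whose proof rests on exactly your key lemma — coversality of the maps $\iota_k^{k+1}$ forces every $\J^s$-morphism into $N_p$ to factor through $\iota_p^{p+s}$, whence $\Im\bigl((\iota_p^{p+s})_*\bigr)=\J^s F_n(N_p)$ — followed by the same direct computation of $k^{-1}(D^\infty)/j(\Ker i^\infty)$ from the exact couple, including your identification $\Ker i^\infty=\Ker\bigl((\iota_0^p)_*\bigr)$ via stabilization below degree $0$ and the first-isomorphism-theorem step giving $D/(\Im i+\Ker i^\infty)\cong \J^pF_{p+q+1}(B)/\J^{p+1}F_{p+q+1}(B)$. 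The only caution is notational: the index conventions in the paper's displayed exact couple are internally inconsistent (the directions of $i$, $j$, $k$ and the placement of $N_{p+1}$ versus $N_p$ do not match part (1) literally), so your self-consistent bookkeeping is the right course rather than matching those formulas symbol for symbol.
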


\begin{lem}\label{lem:comp}
Assume that $i: \Bad _{p+1,p+q+1}(B) \to \Bad_{p,p+q+1}(B)$ is injective. Then, the ABC spectral sequence $E_{pq}^r$ converges to $F(B)$ with the filtration $\J^\ast F(B)$. Moreover, $\alpha_*:F(B) \to F(\tilde B)$ induces an isomorphism of graded quotients with respect to the filtration $\J^*F$.
\end{lem}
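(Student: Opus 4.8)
The plan is to treat the two assertions separately: the convergence statement is essentially a direct corollary of Proposition \ref{prop:ABC}(2), while the statement about $\alpha_*$ requires identifying $F(\tilde B)$ with the $\J$-adic completion of $F(B)$. For the first part I would feed the hypothesis into the exact sequence of Proposition \ref{prop:ABC}(2): since $i\colon \Bad_{p+1,p+q+1}\to\Bad_{p,p+q+1}$ is injective its kernel is trivial, so by exactness the map $E^\infty_{pq}\to\Bad_{p+1,p+q+1}$ is zero, and therefore the inclusion $\J^pF_{p+q+1}(B)/\J^{p+1}F_{p+q+1}(B)\hookrightarrow E^\infty_{pq}$ is also surjective. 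This yields a natural isomorphism $E^\infty_{pq}\cong \J^pF_{p+q+1}(B)/\J^{p+1}F_{p+q+1}(B)$, identifying the $E^\infty$-page with the associated graded of the filtration $\J^\ast F(B)$, which is exactly the asserted convergence.

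For the second part I would first record two elementary consequences of $N\in\Ncat_\J$, where $N\to B\xra{\alpha}\tilde B\to\Sigma N$ is the defining triangle of the phantom castle. Because $\id_N$ lies in every $\J_k$, it lies in $\J^p(N,N)$ for all $p$, so the filtration on $F(N)$ is constant: $\J^pF(N)=F(N)$ and $\Gr^pF(N)=0$. Moreover the structural map $N\to B$ factors through $\iota_0^p\in\J^p(N_p,B)$ for every $p$, so it lies in $\J^\infty(N,B)$ and its image in $F(B)$ is contained in $\J^\infty F(B)$. The ideal property of $\J^p$ shows at once that $\alpha_*$ sends $\J^pF(B)$ into $\J^pF(\tilde B)$, so $\alpha_*$ is filtered and induces $\Gr^p\alpha_*\colon \Gr^pF(B)\to\Gr^pF(\tilde B)$.

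The core of the argument is to identify $\alpha_*$ with the completion map. Applying $F$ to the triangles $N_p\to B\to\tilde B_p$ of the phantom castle produces short exact sequences
\[
0\to F_n(B)/\J^pF_n(B)\to F_n(\tilde B_p)\to K_{p,n-1}\to 0,\qquad K_{p,m}:=\Ker\big((\iota_0^p)_*\colon F_m(N_p)\to F_m(B)\big),
\]
compatible with the tower maps. Passing to $\varprojlim_p$ (the left-hand tower is surjective, so its $\varprojlim^1$ vanishes and its limit is $\widehat{F_n(B)}:=\varprojlim_p F_n(B)/\J^pF_n(B)$) and combining with the Milnor sequence for $\tilde B=\hoprojlim_p\tilde B_p$, I would express $F_n(\tilde B)$ as an extension of $\widehat{F_n(B)}$ by the error terms $\varprojlim_p K_{p,n-1}$ and $\varprojlim^1_p K_{p,n}$, under which $\alpha_*$ becomes the canonical completion map $F(B)\to\widehat{F(B)}$. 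Since completion preserves the associated graded, $\Gr^pF(B)\cong\Gr^p\widehat{F(B)}$ canonically, the whole problem reduces to showing that the two error terms do not contribute to any finite graded piece of $F(\tilde B)$.

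This last point is where the injectivity hypothesis enters and is the main obstacle. A compatible family in $\varprojlim_p K_{p,n}$ automatically lies in $\bigcap_r\Im(\iota_p^{p+r})_*=\J^\infty F_n(N_p)=\Bad_{p,n}$, so both error terms are governed by the tower $(\Bad_{p,n},i)$, and the content of the hypothesis is precisely that this tower has injective structure maps. This is the form of Boardman's strong-convergence criterion relevant here: it forces the associated $\varprojlim^1$ (equivalently the $RE_\infty$-term of the exact couple) to vanish and the surviving $\varprojlim$-part to be absorbed into $\J^\infty F(\tilde B)$. The delicate bookkeeping will be to make precise that these error contributions sit in $\J^\infty F(\tilde B)$ and hence vanish in every $\Gr^pF(\tilde B)$; granting this, $\Gr^p\alpha_*$ is identified with $\Gr^p$ of the completion map and is therefore an isomorphism.
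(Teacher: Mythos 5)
Your first paragraph reproduces the paper's own convergence argument: injectivity of $i$ kills the map $E^\infty_{pq}\to\Bad_{p+1,p+q+1}$ in the exact sequence of Proposition \ref{prop:ABC}(2), so $E^\infty_{pq}\cong \J^pF_{p+q+1}(B)/\J^{p+1}F_{p+q+1}(B)$; that half is fine. The second half is where the gap lies. Your route --- express $F(\tilde B)$ as the $\J$-adic completion of $F(B)$ up to $\varprojlim/{\varprojlim}^1$ error terms and then argue the errors are invisible in the graded quotients --- has two defects. First, the Milnor sequence for $F_n(\tilde B)$ over the tower $\{\tilde B_p\}$ requires $F(\prod\tilde B_p)\cong\prod F(\tilde B_p)$; the lemma assumes only that $F$ is a homological functor, so this is an unstated extra hypothesis (true for $\KK^G(A,\blank)$ in the application, but not in the lemma's generality). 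Second, and decisively, the step you defer as ``delicate bookkeeping'' is the actual content: you must show that ${\varprojlim}^1_p K_{p,n}$ and $\varprojlim_p K_{p,n-1}$ sit inside $\J^\infty F(\tilde B)$ and that, modulo them, the filtration $\J^\ast F(\tilde B)$ matches the completion filtration on the limit. Neither follows from the hypothesis by anything you wrote: injectivity of the structure maps of the subtowers $\Bad_{p,n}(B)$ gives no Mittag-Leffler-type control of the towers $K_{p,n}$ themselves (so no handle on ${\varprojlim}^1 K$), and the filtration $\J^pF(\tilde B)$ is defined by images of morphisms of $\J^p$ with target $\tilde B$, so identifying it with the tower filtration coming from $\{\tilde B_p\}$ is essentially the assertion being proved --- your reduction is circular exactly there. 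Boardman-type criteria govern the spectral sequence of $B$, which you already dealt with in part one; they say nothing about $\J^\ast F(\tilde B)$.

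The missing idea is to never compute $F(\tilde B)$ at all, but to compare the two spectral sequences. Functoriality of the right approximation lifts $\alpha\colon B\to\tilde B$ to a morphism of phantom towers, hence of exact couples; since the cone of $\alpha$ is $\Sigma N$ with $N\in\Ncat_\J$, the induced map $E^2_{pq}(B)\to E^2_{pq}(\tilde B)$ is an isomorphism commuting with the differentials, whence $E^\infty_{pq}(B)\to E^\infty_{pq}(\tilde B)$ is an isomorphism. Now write the exact sequence of Proposition \ref{prop:ABC}(2) for both $B$ and $\tilde B$ and chase the commutative diagram: injectivity of $i$ for $B$ makes the inclusion $\J^pF_{p+q+1}(B)/\J^{p+1}F_{p+q+1}(B)\to E^\infty_{pq}(B)$ an isomorphism, and since $\J^pF_{p+q+1}(\tilde B)/\J^{p+1}F_{p+q+1}(\tilde B)\to E^\infty_{pq}(\tilde B)$ is injective while the middle vertical map is an isomorphism, the induced map on graded quotients is both injective and surjective. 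This argument needs no limits, no product-compatibility of $F$, and only the stated injectivity hypothesis for $B$.
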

\begin{proof}
The convergence of the ABC spectral sequence follows from Proposition \ref{prop:ABC} (2). Since the right approximation is functorial, we obtain the morphism between exact couples which induces the isomorphism $E^2_{pq}(B) \to E^2_{pq}(\tilde{B})$ commuting with the derivation. Hence we obtain the diagram
\[
\xymatrix{
0 \ar[r]& \frac{\J^p F_{p+q+1}(B)}{\J^{p+1}F_{p+q+1} (B)} \ar[r] \ar[d]^{\alpha_*} & E^\infty_{pq}(B) \ar[r] \ar[d]^{\alpha_*}_\cong & \Bad _{p+q+1,p}(B) \ar[r]^i \ar[d]^{\alpha_*} & \Bad _{p,q}(B) \ar[d]^{\alpha_*} \\
0 \ar[r]& \frac{\J ^p F_{p+q+1}(\tilde{B})}{\J^{p+1}F_{p+q+1}(\tilde{B})} \ar[r]& E^\infty_{pq}(\tilde{B}) \ar[r]& \Bad _{p+q+1,p}(\tilde{B}) \ar[r]^i & \Bad _{p,q}(\tilde{B}). \\
}
\]
Consequently, $\alpha _* $ induces the isomorphism of graded quotients of filtered groups $\J^\ast F_p(B)$ and $\J^*F_p(\tilde{B})$ if $i$ is injective.
\end{proof}

\section{The Atiyah-Segal completion theorem}\label{section:3}
In this section we apply the relative homological algebra of the injective class introduced in Section \ref{section:2} for equivariant $\KK$-theory and relate it with the Atiyah-Segal completion theorem. We deal with the Kasparov category $\sigma \Kas ^G$ of $\sigma$-$G$-$\Cst$-algebras, which is closed under countably infinite direct products. The definition and the basic properties of equivariant $\KK$-theory for $\sigma$-$G$-$\Cst$-algebras are summarized in Appendix \ref{section:App}. In most part of this section we assume that $G$ is a compact Lie group. We need not to assume that $G$ is either connected or simply 
connected. 

For a subgroup $H \leq G$, consider the homological ideal $\J_G^H :=\Ker \Res _G^H$ of $\sigma \Kas ^G$. There are only countably many homological ideals of the form $\J_G^H$ since $\J_G^{H_1}=\J_G^{H_2}$ when $H_1 $ and $H_2$ are conjugate and the set of conjugacy classes of subgroups of a compact Lie group $G$ is countable (Corollary 1.7.27 of \cite{MR0177401}),  

\begin{defn}
Let $\F$ be a \textit{family}, that is, a set of closed subgroups of a compact group $G$ that is closed under subconjugacy. We write $\J _G^\F$ for the countable family of homological ideals $\{ \J _G^H \mid H \in \F \}$.
\end{defn}
In particular, we say that the family $\mathcal{T}$ consisting of the trivial subgroup $\{ e \}$ is the trivial family.

By the universal property of the Kasparov category (Theorem \ref{thm:univ}), the induction functor $\Ind _H^G :\sigma \GCsep{H} \to \sigma \GCsep{G}$ given by
$$\Ind _H^G A:=C(G,A)^H=\{ f \in C(G,A) \mid \alpha _h(f(g \cdot h))=f(g) \}$$
with the left regular $G$-action $\lambda_g (f)(g')=f(g^{-1}g')$ induces the functor between Kasparov categories. 

An important property of this functor is the following Frobenius reciprocity.
\begin{prp}[Section 3.2 of \cite{MR2193334}]\label{prp:indres}
Let $G$ be a locally compact group and $H \leq G$ be a cocompact subgroup. Then the induction functor $\Ind _H^G$ is the right adjoint of the restriction functor $\Res _G^H$. That is, for any $\sigma $-$G$-$\Cst$-algebra $A$ and $\sigma $-$H$-$\Cst$-algebra $B$ we have
$$\KK^G(A,\Ind _H^GB) \cong \KK^H(\Res_G^H A,B).$$
\end{prp}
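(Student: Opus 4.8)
The plan is to produce an explicit unit and counit for the putative adjunction $\Res_G^H \dashv \Ind_H^G$ as equivariant $*$-homomorphisms, to verify the two triangle identities, and then to transport the resulting adjunction from the categories of $\sigma$-$\Cst$-algebras to the Kasparov categories.

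First I would write down the counit and unit. For a $\sigma$-$H$-$\Cst$-algebra $B$, let $\epsilon_B \colon \Res_G^H \Ind_H^G B \to B$ be evaluation at the identity, $\epsilon_B(f) = f(e)$; the relation $\alpha_h(f(gh)) = f(g)$ gives $\epsilon_B(\lambda_h f) = f(h^{-1}) = \alpha_h(f(e))$, so $\epsilon_B$ is $H$-equivariant. For a $\sigma$-$G$-$\Cst$-algebra $A$, let $\eta_A \colon A \to \Ind_H^G \Res_G^H A$ be $\eta_A(a)(g) = \alpha_{g^{-1}}(a)$; the defining relation of the induced algebra holds because $\alpha_h(\eta_A(a)(gh)) = \alpha_{g^{-1}}(a)$, and $\eta_A$ is $G$-equivariant since $\eta_A(\alpha_{g_0}a)(g) = \alpha_{g^{-1}g_0}(a) = \lambda_{g_0}(\eta_A(a))(g)$. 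This is the only place cocompactness enters: the norm $\|\eta_A(a)(g)\| = \|a\|$ is constant along $G$, hence descends to a constant function on the orbit space $G/H$; such a constant lies in $C_0(G/H)$—so that $\eta_A(a)$ indeed defines an element of $\Ind_H^G \Res_G^H A$—exactly when $G/H$ is compact, i.e.\ when $H$ is cocompact.

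Next I would check the triangle identities directly on elements, where in fact they hold strictly. For $a \in \Res_G^H A$ one has $\epsilon_{\Res A}(\Res(\eta_A)(a)) = \eta_A(a)(e) = a$, so $\epsilon_{\Res A} \circ \Res(\eta_A) = \id$. For $f \in \Ind_H^G B$ a short computation gives $\eta_{\Ind B}(f)(g) = (g' \mapsto f(gg'))$, whence $\Ind(\epsilon_B)(\eta_{\Ind B}(f))(g) = f(g)$ and $\Ind(\epsilon_B) \circ \eta_{\Ind B} = \id$. Passing to the Kasparov categories, the classes $[\eta_A] \in \KK^G(A, \Ind_H^G \Res_G^H A)$ and $[\epsilon_B] \in \KK^H(\Res_G^H \Ind_H^G B, B)$ then define maps
\[
\Phi(\phi) = \Ind_H^G(\phi) \circ [\eta_A], \qquad \Psi(\psi) = [\epsilon_B] \circ \Res_G^H(\psi),
\]
between $\KK^H(\Res_G^H A, B)$ and $\KK^G(A, \Ind_H^G B)$, built purely from the functors $\Ind_H^G, \Res_G^H$ on the Kasparov categories and Kasparov product with these fixed classes; here $\Phi$ is manifestly natural in $B$ and $\Psi$ in $A$.

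The one substantial point—and the main obstacle—is that deducing $\Phi \circ \Psi = \id$ and $\Psi \circ \Phi = \id$ from the triangle identities requires sliding an arbitrary $\KK$-morphism past $\eta$ and $\epsilon$; that is, it requires $\eta$ and $\epsilon$ to be natural transformations of the induced triangulated functors $\Ind_H^G \circ \Res_G^H$ and $\Res_G^H \circ \Ind_H^G$ on the Kasparov categories, and not merely natural with respect to $*$-homomorphisms (as is immediate from the formulas). I would dispatch this by appealing to the functoriality of $\Ind_H^G$ and $\Res_G^H$ on $\sigma\Kas^G$ and $\sigma\Kas^H$, whose action on morphisms is compatible with the Kasparov product: the naturality squares for $\eta$ and $\epsilon$, which commute on $*$-homomorphisms, then commute on all $\KK$-morphisms by the bifunctoriality and associativity of the Kasparov product—equivalently, by the universal property of the Kasparov category (Theorem \ref{thm:univ}). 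Granting this, $\Phi$ and $\Psi$ are mutually inverse and natural in both variables, which yields the desired isomorphism $\KK^G(A, \Ind_H^G B) \cong \KK^H(\Res_G^H A, B)$.
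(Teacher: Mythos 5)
Your proof is correct and follows essentially the same route as the paper: the paper exhibits the same counit $f \mapsto f(e)$ and unit $a \mapsto a \otimes 1_{G/H}$ (your $\eta_A(a)(g)=\alpha_{g^{-1}}(a)$ under the identification $\Ind_H^G \Res_G^H A \cong C(G/H)\otimes A$), and concludes from the two triangle identities, which it likewise verifies at the level of $\ast$-homomorphisms. Your extra paragraph on why naturality of $\eta$ and $\varepsilon$ passes from $\ast$-homomorphisms to arbitrary $\KK$-morphisms addresses a point the paper leaves implicit, and your justification via the universal property (Theorem \ref{thm:univ}) and the functoriality of $\Ind_H^G$, $\Res_G^H$ on the Kasparov categories is the standard one.
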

\begin{proof}
The equivariant $\KK$-cycles induced from the $\ast$-homomorphisms
\ma{
\varepsilon _A :\Res _G^H \Ind _H^G A \cong C(G,A)^H \to A;& f \mapsto f(e)\\
\eta _B: B \to \Ind _H^G \Res _G^H B \cong C(G/H)\otimes B; & a \mapsto a \otimes 1_{G/H}
}
form a counit and a unit of an adjunction between $\Ind _H^G$ and $\Res _G^H$. Actually it directly follows from the definition that the compositions 
\ma{
&\Res _G^H A \xra{\Res _G^H \eta _A} \Res _G^H \Ind _H^G \Res _G^H A \xra{\varepsilon _{\Res _G^HA}}\Res _G^H A \\
&\Ind _H^GB \xra{\eta _{\Ind _H^GB}}\Ind _H^G \Res _G^H \Ind _H^G B \xra{\Ind _H^G \varepsilon _B}\Ind _H^GB}
are identities in $\sigma \Kas ^G$. 
\end{proof}

\begin{defn}\label{def:FindFres}
Let $G$ be a compact group and let $\F$ be a family of $G$.
\begin{enumerate}
\item A separable $\sigma$-$G$-$\Cst$-algebra $A$ is \textit{$\F$-induced} if $A$ is isomorphic to the inductions $\Ind _H^G A_0$ where $A_0$ is a separable $\sigma$-$H$-$\Cst$-algebra and $H \in \F$. 
We write $\FI$ for the class of $\F $-induced objects. 
\item A separable $\sigma$-$G$-$\Cst$-algebra $A$ is \textit{$\F$-contractible} if $\Res _{G}^H A$ is $\KK^H$-contractible for any $H \in \F$. We write $\FC$ for the class of $\mathcal{F}$-contractible objects.
\end{enumerate}
In particular, when $\F =\mathcal{T}$ we say that $A$ is trivially induced and trivially contractible respectively. 
\end{defn}

\begin{thm}\label{thm:decomp}
Let $G$ be a compact group and let $\F$ be a family $G$. The pair $(\FC,\ebk{\FI}^{\loc})$ is complementary in $\sigma \Kas^G$.
\end{thm}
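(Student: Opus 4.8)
The plan is to realize this complementary pair as an instance of Theorem \ref{thm:homideal}, applied to the countable family $\J_G^\F=\{\J_G^H\mid H\in\F\}$ of homological ideals of $\sigma\Kas^G$. The ambient category admits countable direct products, and each $\J_G^H=\Ker\Res_G^H$ is homological because it is the kernel of the exact functor $\Res_G^H$. Thus the work splits into verifying the hypotheses of Theorem \ref{thm:homideal}---that $\sigma\Kas^G$ has enough $\J_G^H$-injectives and that each $\J_G^H$ is compatible with countable products---and then identifying the resulting pair $(\Ncat_{\J_G^\F},\ebk{\mathfrak{I}_{\J_G^\F}}^\loc)$ with $(\FC,\ebk{\FI}^\loc)$.

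For the injectives I would use Frobenius reciprocity (Proposition \ref{prp:indres}), which provides the adjunction $\Res_G^H\dashv\Ind_H^G$ with unit $\eta$ and counit $\varepsilon$. First, every object of the form $\Ind_H^G B_0$ is $\J_G^H$-injective, since a morphism $f$ with $\Res_G^H f=0$ induces, via $\KK^G(-,\Ind_H^G B_0)\cong\KK^H(\Res_G^H(-),B_0)$, precomposition with $\Res_G^H f=0$, so that morphisms of $\J_G^H$ act trivially. Second, there are enough of them: completing the unit to an exact triangle $N\xra{\iota}A\xra{\eta_A}\Ind_H^G\Res_G^H A\to\Sigma N$, the triangle identity $\varepsilon_{\Res_G^H A}\circ\Res_G^H\eta_A=\id$ exhibits $\Res_G^H\eta_A$ as a split monomorphism, forcing $\Res_G^H\iota=0$, i.e. $\iota\in\J_G^H$; thus $\eta_A$ is $\J_G^H$-monic into a $\J_G^H$-injective object. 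Compatibility with countable products holds because $\Res_G^H$ preserves them in $\KK$-theory (Appendix \ref{section:App}), so $\Ker\Res_G^H$ respects the canonical isomorphism $\KK^G(A,\prod_n B_n)\cong\prod_n\KK^G(A,B_n)$.

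Granting Theorem \ref{thm:homideal}, it remains to identify the two sides. The equality $\Ncat_{\J_G^\F}=\FC$ is immediate from the definitions: $A$ is $\J_G^H$-contractible exactly when $\Res_G^H(\id_A)=\id_{\Res_G^H A}$ vanishes, i.e. $\Res_G^H A$ is $\KK^H$-contractible, and imposing this for all $H\in\F$ gives $\FC$. For $\ebk{\mathfrak{I}_{\J_G^\F}}^\loc=\ebk{\FI}^\loc$ the inclusion $\supseteq$ is clear, since every $\F$-induced object lies in $\mathfrak{I}_{\J_G^\F}$ by the computation above. For $\subseteq$ I would first note that any $\J_G^H$-injective object $I$ is a retract of $\Ind_H^G\Res_G^H I$: applying $\Tcat(-,I)$ to the unit triangle and using injectivity of $I$ shows that $\eta_I$ admits a retraction.

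The crux---and the step I expect to be the main obstacle---is that this only exhibits $I$ as a retract of an induction of the $\sigma$-$H$-$\Cst$-algebra $\Res_G^H I$, whereas $\FI$ consists of inductions of separable algebras. I would bridge this gap with the pro-structure $\sigma\Kas^H\simeq\Pro_\nat\Kas^H$: write $\Res_G^H I$ as a countable homotopy projective limit of separable $H$-$\Cst$-algebras $B_0^{(n)}$. As a right adjoint, $\Ind_H^G$ preserves countable products and hence homotopy projective limits, so $\Ind_H^G\Res_G^H I=\hoprojlim_n\Ind_H^G B_0^{(n)}$ with each $\Ind_H^G B_0^{(n)}\in\FI$. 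Since $\ebk{\FI}^\loc$ is colocalizing and thick, the defining triangle of the homotopy projective limit places $\hoprojlim_n\Ind_H^G B_0^{(n)}$---and therefore the retract $I$---inside $\ebk{\FI}^\loc$, completing $\subseteq$. The one supporting analytic fact, that $\Res_G^H$ commutes with countable products, is routine and can be relegated to Appendix \ref{section:App}.
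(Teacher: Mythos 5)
Your core argument is the paper's proof: reduce to Theorem \ref{thm:homideal}, use the adjunction of Proposition \ref{prp:indres} to show that $\eta_A\colon A\to \Ind_H^G\Res_G^H A$ is a $\J_G^H$-monic morphism into a $\J_G^H$-injective object (so there are enough injectives), note $\Ncat_{\J_G^\F}=\FC$ by definition, and exhibit every $\J_G^H$-injective object as a direct summand of an induced object. Your explicit verification that each $\J_G^H$ is compatible with countable direct products is a hypothesis of Theorem \ref{thm:homideal} that the paper leaves implicit, so including it is fine and correct.

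However, the step you single out as the crux is a non-issue created by a misreading of Definition \ref{def:FindFres}: there, an $\F$-induced object is $\Ind_H^G A_0$ with $A_0$ a separable \emph{$\sigma$-}$H$-$\Cst$-algebra, i.e.\ an arbitrary object of $\sigma\Kas^H$, not a genuine separable $H$-$\Cst$-algebra. Hence $\Ind_H^G\Res_G^H I$ lies in $\FI$ on the nose, and your retract argument already completes the inclusion $\Icat_{\J_G^\F}\subset\ebk{\FI}^{\loc}$; this is exactly what the paper does. Be aware that the patch you propose in its place would not stand on its own: it requires that a separable $\sigma$-$H$-$\Cst$-algebra be isomorphic in $\sigma\Kas^H$ to the homotopy projective limit of its separable $\Cst$-algebra quotients, but the equivalence $\Pro_\nat\GCsep{H}\simeq\sigma\GCsep{H}$ is an equivalence of categories of algebras and $\ast$-homomorphisms, while Theorem \ref{thm:limit} and Corollary \ref{cor:sigma} only concern algebras already presented as telescopes; the comparison map $\varprojlim B_0^{(n)}\to\hoprojlim B_0^{(n)}$ is not shown to be a $\KK^H$-equivalence anywhere in the paper, and this is a genuinely delicate point. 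With the definition read correctly, that step is never needed, and your proof collapses to the paper's.
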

\begin{proof}
This is proved in the same way as Proposition 3.37 of \cite{MR2681710}. By definition, we have $\FC = \Ncat _{\J_G^\F}$ and $\FI \subset \Icat _{\J_G^\F}$. Therefore, by Theorem \ref{thm:homideal}, it suffices to show that $\sigma \Kas ^G$ has enough $\J_G^{\F}$-injectives and all $\J_G^ \F$-injective objects are in $\ebk{\FI}^{\loc}$. The first assertion follows from the existence of the right adjoint functor of $\Res _G^H$. Actually, for any $H \in \F$, the morphism $A \to I_1:=\Ind _H^G\Res _G^HA$ is $\J _G^H$-monic and $I_1$ is $\J_G^H$-injective. Moreover, the morphism $A$ is a direct summand of $I_1$ when $A$ is $\J_G^H$-injective. This implies the second assertion.
\end{proof}

In particular, applying Theorem \ref{thm:decomp} for the case of $\F =\mathcal{T}$, we immediately get the following simple but non-trivial application.
\begin{cor}\label{cor:hom}
Let $A$ be a separable $\sigma$-$\Cst$-algebra and let $\{ \alpha _t \} _{t \in [0,1]}$ be a homotopy of $G$-actions on $A$. We write $A_t$ for the $\sigma $-$G$-$\Cst$-algebra $(A,\alpha _t)$. Then, $A_0$ and $A_1$ are equivalent in $\sigma \Kas ^G/\mathcal{TC}$. In particular, if $A_0$ and $A_1$ are in $\ebk{\mathcal{TI}}^\loc$, then they are $\KK ^G$-equivalent.
\end{cor}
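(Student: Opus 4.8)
The plan is to encode the homotopy as a single $\sigma$-$G$-$\Cst$-algebra and to recognise its two endpoint evaluations as morphisms that become invertible in the quotient $\sigma\Kas^G/\mathcal{TC}$. First I would form the path algebra $\mathbb{A}:=C([0,1],A)$ and equip it with the $G$-action $\tilde\alpha$ defined pointwise by $(\tilde\alpha_g f)(t):=\alpha_t(g)(f(t))$; continuity of the homotopy $\{\alpha_t\}$ makes $\tilde\alpha$ a strongly continuous action, so $\mathbb{A}$ is again a separable $\sigma$-$G$-$\Cst$-algebra. The evaluations $\ev_0,\ev_1\colon\mathbb{A}\to A$ at the endpoints are then $G$-equivariant $\ast$-homomorphisms onto $A_0$ and $A_1$, hence morphisms of $\sigma\Kas^G$.

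The heart of the argument is to show that $\ev_0$ and $\ev_1$ are inverted by the localization functor $\sigma\Kas^G\to\sigma\Kas^G/\mathcal{TC}$. Because $\mathcal{TC}=\Ncat_{\J_G^{\mathcal T}}$ is a thick subcategory, a morphism is inverted by this Verdier localization exactly when its mapping cone lies in $\mathcal{TC}$, so I would compute the cone of $\ev_t$. Since $\Res_G^{\{e\}}$ is an exact functor of triangulated categories, it carries the cone of $\ev_t$ to the cone of the underlying non-equivariant map $\Res_G^{\{e\}}\ev_t\colon C([0,1],A)\to A$. The latter is a homotopy equivalence, with homotopy inverse the inclusion of constant functions and contracting homotopy $H_s(f)(t):=f(st)$, and is therefore a $\KK$-equivalence; consequently its cone is $\KK$-contractible, which says precisely that the cone of $\ev_t$ lies in $\mathcal{TC}$. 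Thus both $\ev_0$ and $\ev_1$ are isomorphisms in $\sigma\Kas^G/\mathcal{TC}$, and composing them along the roof $A_0\xleftarrow{\ \ev_0\ }\mathbb{A}\xrightarrow{\ \ev_1\ }A_1$ produces the desired equivalence between $A_0$ and $A_1$ in $\sigma\Kas^G/\mathcal{TC}$.

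For the final assertion I would appeal to Theorem \ref{thm:decomp}: since $(\mathcal{TC},\ebk{\mathcal{TI}}^{\loc})$ is a complementary pair, the localization functor restricts to an equivalence $\ebk{\mathcal{TI}}^{\loc}\xrightarrow{\sim}\sigma\Kas^G/\mathcal{TC}$, with inverse the right approximation functor. In particular this functor is fully faithful on $\ebk{\mathcal{TI}}^{\loc}$, so when $A_0$ and $A_1$ both belong to $\ebk{\mathcal{TI}}^{\loc}$ an isomorphism between them in the quotient is the image of an isomorphism in $\sigma\Kas^G$ itself, i.e.\ a $\KK^G$-equivalence.

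I expect the only genuine subtlety to be the bookkeeping that $\tilde\alpha$ is a well-defined strongly continuous action and that $\mathbb{A}$ remains a separable $\sigma$-$G$-$\Cst$-algebra; once the path algebra is available, exactness of $\Res_G^{\{e\}}$ reduces the whole statement to the standard contractibility of $C([0,1],A)$ over $A$.
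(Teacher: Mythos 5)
Your proposal is correct and follows essentially the same route as the paper: form the path algebra with the pointwise action, observe that the endpoint evaluations are non-equivariant homotopy equivalences and hence become invertible in $\sigma\Kas^G/\mathcal{TC}$, and compose along the roof; the paper's proof is just a terser version of this. Your justification of the last assertion via the equivalence $\ebk{\mathcal{TI}}^{\loc}\simeq\sigma\Kas^G/\mathcal{TC}$ coming from Theorem \ref{thm:decomp} is exactly the detail the paper dismisses as ``obvious.''
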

Corollary \ref{cor:hom} is applied for the study of $\Cst$-dymniaical systems in the upcoming paper \cite{AranoKubota}. Actually, it follows from Thomsen's description of $\KK$-groups using completely positive asymptotic morphisms \cite{MR1680467} that a unital $G$-$\Cst$-algebra with continuous Rokhlin property (or more generally finite continuous Rokhlin dimension with commuting tower) is contained in the subcategory $\ebk{\mathcal{TI}}^\loc$. 

\begin{proof}
Consider the $\sigma$-$G$-$\Cst$-algebra $\tilde{A}:=(A \otimes C[0,1],\tilde{\alpha})$ where $\tilde{\alpha}(a)(t)=\alpha _t(a(t))$. Since the evaluation maps $\ev _t: \tilde{A} \to A_t$ are non-equivariantly homotopy equivalent, they induce equivalences in $\sigma \Kas ^G / \mathcal{TC}$. Consequently, $\ev _1 \circ (\ev _0)^{-1}:A_0 \to A_1$ is an equivalence in $\sigma \Kas ^G / \mathcal{TC}$. The second assertion is obvious.
\end{proof}

Next we study a canonical model of phantom towers and phantom castles. Actually, we observe that the cellular approximation tower obtained in the proof of Theorem \ref{thm:decomp} is nothing but the Milnor construction of the universal $\F$-free $G$-space (see \cite{MR2195456}). Hereafter, for a compact $G$-space $X$, we write $\cone _X$ for the mapping cone $\{ f \in C_0([0,\infty), C(X)) \mid f(0)=\comp \cdot 1_X \}$ of the $\ast$-homomorphism $\comp \to C(X)$ induced from the collapsing map $X \to \mathrm{pt}$. 

\begin{defn}\label{def:Milnor}
Let $\{H_p\}_{p \in \zahl _{>0}}$ be a countable family of subgroups in $\F$ such that any $L \in \F$ are contained infinitely many $H_p$'s. We call the phantom tower and the phantom castle determined inductively by
$$I_p:=\Ind_H^G \Res_G^H N_{p-1} \cong N_{p-1} \otimes C(G/H_p)$$
is the \textit{Milnor phantom tower} and the \textit{Milnor phantom castle} (associated to $\{ H_p\}$) respectively. 
\end{defn}

By definition, $I_k$ and $N_k$ in the Milnor phantom tower are explicitly of the form
\ma{N_k &\cong A \otimes \cone_{G/H_1} \otimes \cdots \otimes \cone_{G/H_k}\\
I_k &\cong A \otimes \cone_{G/H_1} \otimes \cdots \otimes \cone_{G/H_{k-1}} \otimes C(G/H_k)}
and $\iota _k^{k+1}$ is induced from the restriction (evaluation) $\ast$-homomorphism $\ev _0 : \cone _{G/H_k} \to \comp$ given by $f \mapsto f(0)$.

\begin{lem}\label{lem:Milnor}
The $n$-th step of the cellular approximation $\tilde{\comp}_n$ of $\comp$ is isomorphic to the $n$-th step of the Milnor construction $C(\bigast _{k=1}^n G/H_k)$. 
\end{lem}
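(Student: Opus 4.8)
The plan is to identify the object $\tilde\comp_n$ directly, as an object of $\sigma\Kas^G$, with $C(\bigast_{k=1}^{n}G/H_k)$ by means of the classical homeomorphism ``a product of cones is the cone on the join''; note that ``isomorphic'' here must be read in $\sigma\Kas^G$ (equivalently, $\KK^G$-equivalent), since suspensions will appear and be absorbed only by Bott periodicity. First I would record a geometric model for the bottom of the Milnor phantom castle at $A=\comp$. Write $X_k:=G/H_k$. Reparametrising $[0,\infty)\cong[0,1)$ identifies $\cone_{X_k}$ with $C_0(\mathring{C}X_k)$, where $\mathring{C}X_k:=(X_k\times[0,1))/(X_k\times\{0\})$ is the open cone on $X_k$ (vertex $v_k$ included, base pushed to infinity), and under this identification the structure map $\ev_0\colon\cone_{X_k}\to\comp$ becomes evaluation at $v_k$. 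Since minimal tensor products of commutative algebras are functions on the product, this gives, $G$-equivariantly,
\[
N_n\cong C_0\Big(\,\textstyle\prod_{k=1}^{n}\mathring{C}X_k\Big),\qquad \iota_0^{n}=\ev_{\mathbf v},
\]
the evaluation at the vertex tuple $\mathbf v=(v_1,\dots,v_n)$, which is a $G$-fixed point since each $v_k$ is.

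Because $\mathbf v$ is $G$-fixed, the ideal $C_0(\prod_k\mathring{C}X_k\setminus\{\mathbf v\})$ is $G$-invariant and the resulting extension with quotient $\comp$ is equivariantly semisplit; it therefore defines an exact triangle in $\sigma\Kas^G$. Since $\tilde\comp_n$ is by construction the cofibre of $\iota_0^n\colon N_n\to\comp$, we obtain
\[
\tilde\comp_n\cong\Sigma\,C_0\Big(\,\textstyle\prod_{k=1}^{n}\mathring{C}X_k\setminus\{\mathbf v\}\Big).
\]

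The geometric heart of the argument is the $G$-equivariant homeomorphism $\prod_{k=1}^{n}CX_k\cong C\big(\bigast_{k=1}^{n}X_k\big)$. I would build it from a radial rescaling of the cube $[0,1]^n$ onto the corner simplex $\{u\in[0,1]^n:\sum_k u_k\le 1\}$ that fixes the origin and preserves every coordinate hyperplane $\{s_k=0\}$; it is automatically $G$-equivariant because $G$ acts only through the $X_k$-coordinates, the cone parameters being fixed. This homeomorphism sends the vertex tuple to the cone vertex and, restricting away from the bases, identifies $\prod_k\mathring{C}X_k$ with the open cone $\mathring{C}J_n$ on $J_n:=\bigast_{k=1}^{n}X_k$. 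Deleting the ($G$-fixed) vertex $w$ then gives $\prod_k\mathring{C}X_k\setminus\{\mathbf v\}\cong\mathring{C}J_n\setminus\{w\}\cong J_n\times\real$ equivariantly.

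Feeding the last identification into the previous display yields $\tilde\comp_n\cong\Sigma\big(C_0(\real)\otimes C(J_n)\big)=\Sigma^2 C(J_n)$, which is isomorphic to $C(J_n)=C(\bigast_{k=1}^{n}G/H_k)$ in $\sigma\Kas^G$ by Bott periodicity, as required. The step I expect to be the main obstacle is the geometric homeomorphism of the previous paragraph: one must exhibit the radial map of the cube onto the corner simplex explicitly enough to verify that it respects exactly which coordinates vanish, so that the single vertex and the whole family of bases are matched simultaneously and correctly. Once this is in place, the equivariance (immediate, as the cone parameters are $G$-fixed) and the two-fold suspension together with Bott periodicity are routine bookkeeping.
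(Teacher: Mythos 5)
Your proof is correct and takes essentially the same route as the paper: both arguments rest on the $G$-equivariant identification of a product of cones with the cone on the join (the paper realizes it via the level sets of the function $f=t_1+\dots+t_n$, you via radial rescaling of the cube onto the corner simplex). The only cosmetic difference is that the paper identifies $N_n$ directly with the mapping cone $\cone_{\bigast_{k=1}^n G/H_k}$, so that $\tilde{\comp}_n \cong C(\bigast_{k=1}^n G/H_k)$ is read off from the mapping-cone triangle with no periodicity needed, whereas you pass through the kernel of $\ev_{\mathbf v}$ and absorb the resulting extra suspensions by Bott periodicity.
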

\begin{proof}
The join $\bigast _{k=1}^n G/H_k$ is defined as the quotient $\Delta^n \times (\prod G/H_k) /\sim $ where $\Delta ^n:=\{ (t_1,\dots ,t_n) \in [0,1]^n \mid \sum t_i=1 \}$ and $(t_1,\dots,t_n,x_1,\dots,x_n) \sim (t_1,\dots, t_n,y_1,\dots,y_n)$ if $x_k=y_k$ for any $k$ such that $t_k \neq 0$. 
Let $f$ be an element in $\cone _{G/H_1} \otimes \cdots \otimes \cone _{G/H_n}$ given by 
$$f(((x_1,t_1),\dots,(x_n,t_n)))=t_1+\dots +t_n.$$
By definition $f^{-1}(t)$ is $G$-homeomorphic to the join $\bigast G/H_k$ and moreover $f^{-1}((0,\infty))\cong (0,\infty) \times (\bigast G/H_k)$. On the other hand, $f^{-1}(0)=\ast$. Consequently $\cone _{G/H_1} \otimes \cdots \otimes \cone _{G/H_n}$ is $G$-equivariantly isomorphic to the mapping cone $\cone _{\bigast  G/H_k}$.
\end{proof}

More generally, let $X$ be a $\F$-free (i.e.\ every stabilizer subgroups are in $\F$) finite $G$-CW-complex containing a point $x$ whose stabilizer subgroup is $H$. By Proposition 2.2 of \cite{MR2563811}, there is $n>0$ such that $C(X)$ is $(\J _G^\F)^n$-injective. Moreover, the morphism $\ev_0 : \cone _X \to \comp$ is in $\J_G^H$ since the path of $H$-equivariant $\ast$-homomorphisms $\ev _{(t,x)} : \cone _X \to \comp $ connects $\ev _0$ and zero. Let $\{ X_i \}$ be a family of $\F$-free compact $G$-CW-complexes such that for any $H \in \F$ there are infinitely many $X_i$'s such that $X_i^H\neq \emptyset$. Then, in the same way as Theorem \ref{thm:homideal}, the exact triangle 
$$SC(\bigast_{i=1}^\infty X_i) \to C_0(\prod_{i=1}^\infty \cone _{X_i}) \to \comp \to C(\bigast _{i=1}^\infty X_i)$$
gives the approximations of $\comp$ with respect to the complementary pair $(\FC, \ebk{\FI}^\loc)$.  

Now we compare the filtration $(\J_G^\F)^*(A,B)$ with another one;
$$(I_G^\F)^n \KK^G(A,B):=\{ \sum \gamma _i^1 \cdots \gamma _i^n \xi_i \mid \gamma _k^i \in I_G^{H_k}, \xi_i \in \KK^G(A,B) \}$$ 
where $I_G^{H}$ are the augmentation ideals $\Ker \Res _G^H$ of $R(G)$ and $\{ H_i \}$ is the same as Definition \ref{def:Milnor}. Obviously its equivalence class is independent of the choice of such $\{ H_i \}$. 
\begin{exmp}\label{exmp:torus}
We consider the case that $G=\mathbb{T}^1$ and $\F =\mathcal{T}$. The first triangle in the Milnor phantom tower is
\[
\xymatrix@R=1.73em@C=1em{
\comp \ar[rd] && C_0(\real ^2) \ar[ll]_{\iota _0^1} \\
&C(\mathbb{T}^1) \ar[ru]|\circ &
}
\]
where $\mathbb{T}^1=U(1)$ acts on $\real ^2 =\comp$ canonically. By the Bott periodicity, $\KK^G(N_1,\comp)$ is freely generated by the Bott generator $\beta \in \KK^G(N_1,\comp)$ and $\J_G (N_1,\comp)=I_G \cdot \beta$. Consequently, $\iota _0^1 $ is in $I_G\KK^G(A,B)$. More explicitly, $\iota _0^1=\lambda \cdot \beta$ where $\lambda :=[\Lambda ^0 \comp]-[\Lambda ^1 \comp]$. Since $\iota _0^1$ is $\J_G$-coversal, $\J _G(A,B) =I_G\KK^G(A,B)$ holds for any $A$ and $B$. 
\end{exmp}

\begin{exmp}\label{exmp:flag}
Let $G$ be a Lie group with Hodgkin condition (i.e.\ $G$ is connected and $\pi _1(G)$ is torsion free) and let $T$ be a maximal torus of $G$. By the Borel-Weil-Bott theorem, the equivariant index of the Dolbeault operator $\overline{\partial}+\overline{\partial}^* $ on the flag manifold $G/T$ is $1 \in R(G)$. Therefore, the corresponding $\K$-homology cycle $[\overline{\partial}+\overline{\partial}^*]$ determines a left inverse of $\pi^* : \comp \to C(G/T)$. This implies that $\iota _0^1=0$. More generally, for any compact Lie group $G$, there is a subgroup $T$ of $G$ which is isomorphic to a finite extension of a torus such that $\comp$ is a direct summand of $C(G/T)$ and hence $\J _G^T=I_G^T\KK^G=0$ (Proposition 4.1 of \cite{MR0248277}). 
\end{exmp}

\begin{thm}\label{thm:IG1}
Let $H \leq G$ be compact Lie groups satisfying the Hodgkin condition and $\rank G - \rank H\leq 1$. For a group homomorphism $\varphi: L \to G$, let $\F$ be the smallest family containing $\{ \varphi ^{-1}(gHg^{-1}) \mid g \in G \}$. Then, for any $r \in \zahl _{>0}$ there is $k \in \zahl _{>0}$ such that $\iota ^k_0 \in (I_L^\F)^r \KK^L (N_k,\comp)$.
\end{thm}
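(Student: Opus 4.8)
The plan is to peel the statement down, by functoriality and multiplicativity, to the two special cases already computed in Example \ref{exmp:torus} and Example \ref{exmp:flag}. I would first remove $\varphi$. Restriction along $\varphi$ is a triangulated tensor functor $\varphi^\ast\colon\sigma\Kas^G\to\sigma\Kas^L$, and through $\varphi$ the space $G/H$ is an $\F$-free $L$-space: the stabiliser of $gH$ is $\varphi^{-1}(gHg^{-1})\in\F$, and $(G/H)^K\neq\emptyset$ for every $K\in\F$. Hence the $L$-Milnor phantom tower for $\comp$ (Definition \ref{def:Milnor}) may be taken to be the $\varphi^\ast$-image of the $G$-Milnor tower built from the constant family $X_i=G/H$, so that $\iota^k_0$ over $L$ is $\varphi^\ast$ of $\iota^k_0$ over $G$. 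On representation rings $\varphi^\ast\colon R(G)\to R(L)$ carries $I_G^H$ into $I_L^{\varphi^{-1}(H)}$, and since $\varphi^{-1}(H)$ is a maximal member of $\F$ one has $(I_L^{\varphi^{-1}(H)})^r\subseteq(I_L^\F)^r$. It therefore suffices to prove, for $G$ itself with the constant-$H$ tower, that $\iota^r_0\in (I_G^H)^r\,\KK^G(N_r,\comp)$.

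For this tower $N_k\cong\cone_{G/H}^{\otimes k}$ and each $\iota^{k}_{k-1}$ applies $\ev_0$ to one tensor factor, so $\iota^r_0$ is the $r$-fold external Kasparov product $(\ev_0)^{\otimes r}$ of $\iota^1_0=\ev_0\in\KK^G(\cone_{G/H},\comp)$. As the external product is $R(G)$-bilinear for the $R(G)=\KK^G(\comp,\comp)$-module structure, it is enough to establish the single-step statement
\[
\iota^1_0\in I_G^H\,\KK^G(\cone_{G/H},\comp),
\]
for then $(\iota^1_0)^{\otimes r}\in(I_G^H)^r$. Here I would exploit that, $H$ being connected, $\Res_H^{T_H}\colon R(H)\to R(T_H)$ is injective for a maximal torus $T_H\le H$, so that $I_G^H=I_G^{T_H}$.

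This lets me trade $H$ for $T_H$. Example \ref{exmp:flag} applied to $H$ gives a splitting $\comp\xrightarrow{\pi^\ast}C(H/T_H)\xrightarrow{\rho}\comp$ in $\sigma\Kas^H$ coming from the Dolbeault class; inducing along $\Ind_H^G$ and using $\Ind_H^G\comp=C(G/H)$ and $\Ind_H^G C(H/T_H)=C(G/T_H)$ exhibits $C(G/H)$ as a $\KK^G$-retract of $C(G/T_H)$ compatible with the unit maps $\pi^\ast$ from $\comp$. Passing to mapping cones yields $a\colon\cone_{G/H}\to\cone_{G/T_H}$ with $\ev_0^{G/T_H}\circ a=\ev_0^{G/H}$, whence $\iota^1_0(G/H)=a^\ast\,\iota^1_0(G/T_H)$ and $a^\ast$ is $R(G)$-linear. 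So it remains to show $\iota^1_0(G/T_H)\in I_G^{T_H}\,\KK^G(\cone_{G/T_H},\comp)$, where $T_H$ is a torus of corank $\rank G-\rank H\le 1$ in $G$.

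If $\rank G=\rank H$ then $T_H$ is a maximal torus of $G$ and Example \ref{exmp:flag} gives $\iota^1_0(G/T_H)=0$. The remaining corank-one case, $S:=T_H$, is the crux. Writing $T_G=S\times S'$ with $S'\cong\mathbb{T}^1$, the projection $G/S\to G/T_G$ is a principal $\mathbb{T}^1$-bundle, the unit-circle bundle of the $G$-line bundle $\mathcal L$ attached to the character of $T_G$ generating $\widehat{S'}$. I would compute $\iota^1_0(G/S)$ from the equivariant Gysin (cofibre) sequence of this bundle, combined with the splitting of $\comp$ off $C(G/T_G)$ from Example \ref{exmp:flag}, identifying it with the $K$-theoretic Euler class $1-[\mathcal L]$ times a Bott class, exactly the relative form of Example \ref{exmp:torus}. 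Because $\mathcal L$ is associated to a $T_G$-character trivial on $S$, one has $1-[\mathcal L]\in I_G^{S}$, which gives the claim. The main obstacle is precisely this last computation: setting up the equivariant Gysin sequence for the circle bundle and carefully tracking $\ev_0$ through the octahedral identifications relating $\cone_{G/S}$, $\cone_{G/T_G}$ and the induced cone, so as to place the distinguished generator $\iota^1_0$ in the augmentation submodule and not merely in $\J_G^{S}$.
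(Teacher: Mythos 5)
Your skeleton is the same as the paper's (reduce to $\varphi=\id$; equal rank via Example \ref{exmp:flag}; corank one via a torus/Euler-class computation), but the corank-one step, which you yourself call the crux, contains a genuine gap, and it is exactly the point that carries the content of the theorem. The assertion ``$1-[\mathcal{L}]\in I_G^S$'' is a type error: the Euler class of the circle bundle $G/S\to G/T_G$ lives in $\K^0_G(G/T_G)\cong R(T_G)$, and under this isomorphism it is $1-\chi$, which lies in the augmentation ideal $I_{T_G}^{S}=\Ker (R(T_G)\to R(S))$ of $R(T_G)$ --- not in $I_G^S\subset R(G)$, which is the ideal the theorem is about. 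Consequently your Gysin argument yields only $\Res_G^{T_G}\iota_0^1\in I_{T_G}^{T_H}\KK^{T_G}(N_1,\comp)$, which is precisely where the paper's proof also arrives (via the triangle for $C(T_G/T_H)\cong C(\mathbb{T}^1)$, as in Example \ref{exmp:torus}). The passage from the ideal $I_{T_G}^{T_H}$ of $R(T_G)$ to the ideal $I_G^{T_H}=I_G^H$ of $R(G)$ is the hard step: the paper invokes Lemma 3.4 of \cite{MR935523}, which gives only $(I_{T_G}^{T_H})^n\subset I_G^{T_H}R(T_G)$ for sufficiently large $n$, and then descends from $\KK^{T_G}$ to $\KK^{G}$ using that $\KK^G(A,B)$ is an $R(G)$-module direct summand of $\KK^{T_G}(A,B)$ (this is where the Hodgkin condition enters). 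That is exactly why the statement reads ``for any $r$ there is $k$'': one must take $k$ on the order of $nr$, not $k=r$.

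Your plan instead asserts the single-step membership $\iota_0^1\in I_G^H\,\KK^G(\cone_{G/H},\comp)$ and concludes with $k=r$ by $R(G)$-bilinearity of the external product. The bilinearity step is fine, but the single-step claim is strictly stronger than what the paper proves, and nothing in the proposal substantiates it: the Gysin sequence together with the Dolbeault splitting controls multiples by elements of $R(T_G)=\K^0_G(G/T_G)$, not by elements of $R(G)$, and converting the one into the other is the entire difficulty (compare the Remark following Corollary \ref{cor:cyc}, where such a membership statement fails outright once the corank hypothesis is dropped). You flag this computation as ``the main obstacle''; as written, the proposal stops exactly where the proof has to begin. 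A secondary, repairable, imprecision: in removing $\varphi$ you declare the $L$-Milnor tower to be $\varphi^*$ of the $G$-tower, but the pullback is not a Milnor tower for $L$ in the sense of Definition \ref{def:Milnor} (its injectives are not of the form $N_{p-1}\otimes C(L/K)$ with $K\in\F$). The correct argument, as in the paper, is that $\varphi^*$ of the $G$-Milnor construction is $(\J_L^\F)^l$-injective because $L$ acts $\F$-freely on $\bigast_{i} G/H$, so that $\iota_0^l$ of the genuine $L$-tower factors through $\varphi^*\iota_0^k$; this factorization, rather than an identification of towers, is what makes the reduction to $\varphi=\id$ legitimate.
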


\begin{proof}
Let $(N_l, I_l)$ and $(N_k', I_k')$ be a Milnor phantom tower of $\comp$ in $\sigma \Kas ^L$ and $\sigma \Kas ^G$ respectively. Since $L$ acts $\F$-freely on $\bigast _{i=1}^k G/H_i$ by $\varphi$, for any $k>0$ there is $l>0$ such that $\varphi ^*I_k'$ is $(\J _L^\F)^l$-injective. Thus, the composition $N_l \to \comp \to \varphi^*I_k'$ is zero and hence $\iota _0^l :N_l \to \comp$ factors through $\varphi ^*\iota _0^k : \varphi ^*N_k' \to \comp$. Therefore, it suffices to show the assertion when $\varphi =\id$.

When $\rank G =\rank H$, it immediately follows from Example \ref{exmp:flag}. To see the case that $\rank G-\rank H =1$, choose an inclusion of maximal tori $T_H \subset T_G$. Consider the exact triangle 
$S C(T_G/T_H) \to \cone _{T_G/T_H} \to \comp \to C(T_G/T_H)$. In the same way as Example \ref{exmp:torus}, we obtain that $\Res _G^{T_G}\iota _0^1$ is in $I_{T_G}^{T_H}\KK^{T_G}(N_1,\comp)$. Since $(I_{T_G}^{T_H})^n \subset I_G^{T_H}R(T_G)$ for sufficiently large $n>0$ (Lemma 3.4 of  \cite{MR935523}), for any $l>0$ there is $k>0$ such that $\iota _0^k=\iota _0^1 \otimes \cdots \otimes \iota _0^1$ is in $(I_{G}^{H})^l \KK^{T_G}(N_k,\comp)$ (note that $I_G^{T_H}=I_G^H$). We obtain the consequence because $\KK^G(A,B)$ is a direct summand of $\KK^{T_G}(A,B)$.
\end{proof}

As a corollary, we obtain a generalization of Corollary 1.3 of \cite{MR935523}. For a family $\F$ of $G$, we write $\F _{\cyc}$ for the family generated by (topologically) cyclic subgroups in $\F$. In particular, let $\Z$ denote the family generated by all cyclic subgroups. Here, we say that $T \leq G$ is a cyclic subgroup of $G$ if there is an element $g \in T$ such that $\overline{\{ g^n \}}=T$. Note that $T$ is cyclic if and only if $T \cong \mathbb{T}^m \times \zahl /l\zahl$.

\begin{cor}\label{cor:cyc}
For general compact Lie group $G$, the following hold: 
\begin{enumerate}
\item There is $n>0$ such that $(\J ^{\Z}_G)^n=0$. In particular, the subcategory $\Z \mathcal{C}$ is zero in $\sigma \Kas ^G$.
\item For any family $\F$ of $G$, the filtrations $(\J_G^\F)^*$ and $(\J_G^{\F_{\cyc}})^*$ are equivalent. Moreover, $\FC= \F _\cyc \mathcal{C}$ in $\sigma \Kas ^G$.
\end{enumerate}
\end{cor}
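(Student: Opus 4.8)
The plan is to establish (1) and then deduce (2) from it. For (1), I fix the Milnor phantom tower (Definition \ref{def:Milnor}) of $\comp$ for the family $\Z$, with cyclic subgroups $\{H_p\}$ chosen so that each topologically cyclic subgroup of $G$ occurs infinitely often. Using the identity $\J^p(A,B)=\Im(\iota_0^p)_*$ and the fact that the tower of an arbitrary coefficient $B$ is obtained from that of $\comp$ by applying $B\otimes(-)$ (so the connecting morphisms for $B$ are $\id_B\otimes\iota_0^p$), I reduce $(\J_G^\Z)^n=0$ to producing a single $n$ with $\iota_0^n=0$ in $\KK^G(N_n,\comp)$. I then split this into a purely algebraic nilpotency statement in $R(G)$ and a comparison of the homological filtration with the augmentation filtration.

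The algebraic input is that $(I_G^\Z)^n=0$ in $R(G)$ for some $n$. Here $R(G)$ is Noetherian and reduced, as characters embed it into the continuous class functions; hence the zero ideal is the intersection of finitely many minimal primes $\mathfrak p_1,\dots,\mathfrak p_t$. By Segal's description of the prime spectrum of $R(G)$, every prime contains the augmentation ideal $I_G^C$ of some topologically cyclic subgroup $C$, so I may pick $C_j$ with $I_G^{C_j}\subseteq\mathfrak p_j$ and obtain
\[
I_G^{C_1}\cdots I_G^{C_t}\subseteq\mathfrak p_1\cap\cdots\cap\mathfrak p_t=0.
\]
Since $R(G)$ is commutative and each $C_j$ appears infinitely often among $\{H_p\}$, any sufficiently long product $I_G^{H_1}\cdots I_G^{H_n}$ contains all the factors $I_G^{C_j}$ and therefore vanishes, giving $(I_G^\Z)^n=0$.

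For the comparison I will combine Theorem \ref{thm:IG1} with Example \ref{exmp:flag}: given $r$, I want $k$ with $\iota_0^k\in(I_G^\Z)^r\KK^G(N_k,\comp)$, for then the previous paragraph forces $\iota_0^k=0$ as soon as $(I_G^\Z)^r=0$. Example \ref{exmp:flag} provides, for any compact Lie group, a subgroup $T$ that is a finite extension of a torus with $\comp$ a direct summand of $C(G/T)$, hence $I_G^T\KK^G=0$; this is the device that removes the Hodgkin hypothesis and lets one descend to a toral situation, where Theorem \ref{thm:IG1} already compares the Milnor map with powers of augmentation ideals of tori. The main obstacle is exactly this descent: Theorem \ref{thm:IG1} is phrased for Hodgkin pairs of corank at most one, whereas $\Z$ must range over all topologically cyclic subgroups of a group that may be disconnected and have torsion. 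I expect to handle it by iterating the corank-one reduction of Theorem \ref{thm:IG1} down to maximal tori, which are themselves topologically cyclic, and separately feeding in the finite cyclic subgroups detecting the torsion, arranging that the family produced by the preimage construction of Theorem \ref{thm:IG1} lands inside $\Z$; assembling these reductions is the technical heart of (1).

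Part (2) is then formal. As $\F_{\cyc}\subseteq\F$, every ideal of the $\F_{\cyc}$-family occurs in the $\F$-family, and since an ideal absorbs arbitrary morphisms composed on either side, truncating a long $\F$-product to a subsequence realizing $\J_G^{C_1}\circ\cdots\circ\J_G^{C_m}$ gives $(\J_G^\F)^n\subseteq(\J_G^{\F_{\cyc}})^m$ for $n\gg m$. Conversely, for each $H\in\F$ I apply (1) to the group $H$ to obtain cyclic subgroups $C_1,\dots,C_k$ of $H$, necessarily members of $\F_{\cyc}$, with $\J_H^{C_1}\circ\cdots\circ\J_H^{C_k}=0$ in $\sigma\Kas^H$. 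For $f_j\in\J_G^{C_j}$ one has $\Res_G^H f_j\in\J_H^{C_j}$ because $\Res_G^{C_j}=\Res_H^{C_j}\circ\Res_G^H$, so $\Res_G^H(f_1\circ\cdots\circ f_k)=0$ and hence $\J_G^{C_1}\circ\cdots\circ\J_G^{C_k}\subseteq\J_G^H$. Composing these inclusions over the factors of $(\J_G^\F)^n$ produces $(\J_G^{\F_{\cyc}})^m\subseteq(\J_G^\F)^n$, so the two filtrations are equivalent. The same restriction argument yields $\FC=\F_{\cyc}\mathcal C$: the inclusion $\FC\subseteq\F_{\cyc}\mathcal C$ is immediate, while if $A$ is $\J_G^C$-contractible for every cyclic $C\in\F$, then for each $H\in\F$ the object $\Res_G^H A$ is contractible over all cyclic subgroups of $H$ and so is $\KK^H$-contractible by (1), i.e.\ $A\in\FC$.
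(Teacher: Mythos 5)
Your reduction of $(\J_G^\Z)^n=0$ to a single vanishing $\iota_0^n=0$ in $\KK^G(N_n,\comp)$ is sound, your algebraic nilpotency $(I_G^\Z)^n=0$ (reducedness and Noetherianity of $R(G)$ plus Segal's description of its prime spectrum) is correct and plays the same role as the paper's covering argument for augmentation ideals, and your part (2) is exactly the restriction/absorption argument by which the paper deduces (2) from (1). The problem is the remaining step: producing, for each $r$, some $k$ with $\iota_0^k\in(I_G^\Z)^r\KK^G(N_k,\comp)$. This is the entire content of (1), and you do not prove it --- you explicitly defer it as ``the technical heart,'' with only a hope that iterating Theorem \ref{thm:IG1} will close it. Moreover the sketched route cannot work as stated. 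Theorem \ref{thm:IG1} only produces comparisons for families of the form $\{\varphi^{-1}(gHg^{-1})\}$ with $H\leq G$ a Hodgkin pair of corank at most one, and for the homomorphisms actually available --- a faithful representation $\pi:G\to U(n)$ with $H=T_{U(n)}$, or characters $G\to\mathbb{T}^1$ with $H=\{e\}$ --- the resulting preimage families consist of abelian subgroups, respectively kernels of characters, which are in general not topologically cyclic; note that even a closed subgroup of a topologically cyclic group need not be cyclic (e.g.\ $\zahl/2\times\zahl/2\subset\mathbb{T}^2$), so there is no way to arrange the preimage family to ``land inside $\Z$.'' Finally, a direct comparison $\iota_0^k\in (I_G^\F)^r\KK^G(N_k,\comp)$ is simply false for general families (the Remark following the corollary gives the counterexample $G=\mathbb{T}^2$, $\F=\mathcal{T}$), so for $\F=\Z$ such a statement is not a stepping stone: it is equivalent to the corollary itself, since a posteriori $\iota_0^k=0$.

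What the paper does instead is a two-stage reduction in which Theorem \ref{thm:IG1} is only ever applied to families it can actually reach. First, applying it to $T_{U(n)}\leq U(n)$ and a faithful representation $\pi$, the preimage family is the family $\mathcal{AB}$ of \emph{all abelian subgroups} of $G$; combined with $(I_G^{\mathcal{AB}})^r=0$ (your prime-ideal argument works here, as does the observation that finitely many Cartan subgroups cover $G$ up to conjugacy and characters vanishing on a covering family have zero product), this yields $(\J_G^{\mathcal{AB}})^k=0$, and your restriction/absorption glue then reduces (1) to the case of abelian compact Lie groups. Second, for abelian $G$ one inducts on the order of $G/G^0$: if $G/G^0$ is cyclic then $G$ itself is topologically cyclic and there is nothing to prove; otherwise $G$ is covered by the family $\mathcal{P}$ of pullbacks of the proper subgroups of $G/G^0$, so $(I_G^{\mathcal{P}})^m=0$, and Theorem \ref{thm:IG1} applied to the compositions $G\to G/G^0\to\mathbb{T}^1$ with the corank-one Hodgkin pair $(\mathbb{T}^1,\{e\})$ gives $(\J_G^{\mathcal{P}})^n\subset(I_G^{\mathcal{P}})^m\KK^G=0$; the induction hypothesis plus absorption finishes. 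So your proposal contains all the formal machinery (the reduction to $\iota_0^n$, the $R(G)$ algebra, the absorption argument) but omits the one step where real work happens, and filling it requires this detour through abelian subgroups and component groups rather than any direct comparison for the cyclic family.
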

Note that the second assertion means that for any $n>0$ we obtain $k>0$ (which does not depend on $A$ and $B$) such that $(\J_G^\F)^k(A,B) \subset (\J_G^{\F_{\cyc}})^n(A,B)$.
\begin{proof}
Let $\pi : G \to U(n)$ be a faithful representation of $G$. Apply Theorem \ref{thm:IG1} for $T_{U(n)} \leq U(n)$ and $\pi$. In this case $\F$ is equal to the family of all abelian subgroups $\mathcal{AB}$ of $G$. Consequently we obtain $k\in \zahl_{>0}$ such that $(\J_G^{\mathcal{AB}})^k=0$. Therefore, it suffices to show that for any abelian compact Lie group $G$ there is a large $n>0$ such that $(\J_G^\Z )^n(A,B)=0$. 

We prove it by induction with respect to the order of $G/G^0$. When $G/G^0$ is cyclic, then the assertion holds because $G$ is also cyclic. Now we assume that $G/G^0$ is not cyclic (and hence any element in $G/G^0$ is contained in a proper subgroup). Let $\mathcal{P}$ be the family of $G$ generated by pull-backs of proper subgroups of $G/G^0$. By the induction hypothesis, it suffices to show that there is a large $n>0$ such that $(\J _G^{\mathcal{P}})^n=0$. Because $G$ is covered by finitely many subgroups in $\mathcal{P}$, we obtain a large $m>0$ such that $(I_G^{\mathcal{P}})^m=0$. Applying Theorem \ref{thm:IG1} for compositions of the quotient $\pi :G \to G/G^0$ and group homomorphisms $G/G^0 \to \mathbb{T}^1$, we obtain $n>0$ such that $(\J _G^{\mathcal{P}})^n \subset (I_G^{\mathcal{P}})^m\KK^G=0$. 

The assertion (2) immediately follows from (1). 
\end{proof}

\begin{remk}
Unfortunately, in contrast to Theorem \ref{thm:IG1}, $\iota _0^k \in I_G^\F \KK^G(N_k,\comp)$ does not hold for general compact Lie groups and families. For example, consider the case that $G=\mathbb{T}^2$ and $\F=\mathcal{T}$. Computing the six-term exact sequence of the equivariant $\K$-homology groups associated to the exact triangle
$$SC(S^{2n-1} \times S^{2n-1}) \to \cone _{S^{2n-1} \times S^{2n-1}} \to \comp \to C(S^{2n-1} \times S^{2n-1}),$$
we obtain $\KK ^G(\cone _{S^{2n-1} \times S^{2n-1}},\comp) \cong R(G) \cdot \iota _0^k$ (note that $\KK^G_1(C(S^{2n-1} \times S^{2n-1}),\comp) \cong \K_1(\comp P^{n} \times \comp P^n)=0$ by Poincar\'e duality). By Theorem \ref{thm:limit} (3), we obtain $\KK^G(N,\comp) \cong R(G) \cdot \iota _0^\infty $ and hence $\iota _0^\infty$ is not in $I_G \KK^G(N,\comp)$. 
\end{remk}

Instead of Theorem \ref{thm:IG1}, the following theorem holds for general compact Lie groups and families.

\begin{thm}\label{thm:IG2}
Let $G$ be a compact Lie group and let $A$, $B$ be $\sigma$-$\Cst$-algebras such that $\KK^G_*(A,B)$ is finitely generated for $\ast=0,1$. Then the filtrations $(\J_G^\F)^*(A,B)$ and $(I_G^\F)^* \KK^G(A,B)$ are equivalent.
\end{thm}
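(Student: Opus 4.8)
The plan is to view both constructions as filtrations of the finitely generated $R(G)$-module $M:=\KK^G_*(A,B)$ and to prove that they are cofinally equivalent, using that $R(G)$ is Noetherian (Segal). Recall that $R(G)=\KK^G(\comp,\comp)$ acts on $\KK^G(A,B)$ by $\gamma\cdot\xi=\xi\circ(\gamma\otimes\id_A)$ with $\gamma\otimes\id_A\in\KK^G(A,A)$. The \emph{easy inclusion} $(I_G^\F)^n\KK^G(A,B)\subseteq(\J_G^\F)^n(A,B)$ then holds with no finiteness hypothesis: if $\gamma\in I_G^H=\Ker(\Res_G^H\colon R(G)\to R(H))$ then multiplicativity of $\Res_G^H$ gives $\Res_G^H(\gamma\otimes\id_A)=\Res_G^H(\gamma)\otimes\id=0$, so $\gamma\otimes\id_A\in\J_G^H(A,A)$, and a length-$n$ product factors as $\gamma_1\cdots\gamma_n\cdot\xi=\xi\circ(\gamma_1\otimes\id_A)\circ\cdots\circ(\gamma_n\otimes\id_A)$, which lies in the $n$-fold composition ideal $\J_G^{H_1}\circ\cdots\circ\J_G^{H_n}=(\J_G^\F)^n(A,B)$. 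This supplies one half of the equivalence, so the content of the theorem is the reverse inclusion $(\J_G^\F)^k(A,B)\subseteq(I_G^\F)^n M$ for $k\gg n$.

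For the reverse inclusion I would exploit Theorem~\ref{thm:IG1}. Using the Milnor phantom tower (Definition~\ref{def:Milnor}) one has $\J^k(A,B)=\Im((\iota_0^k)_*)$ with $\iota_0^k=\id_B\otimes j_k$, where $j_k\colon\cone_{Y_k}\to\comp$ and $Y_k=\bigast_{i=1}^kG/H_i$. If the \emph{universal} membership $j_k\in(I_G^\F)^r\KK^G(\cone_{Y_k},\comp)$ held, then $R(G)$-linearity of the Kasparov product would immediately give $\J^k(A,B)\subseteq(I_G^\F)^rM$ for all $A,B$ with no finiteness; this is exactly what Theorem~\ref{thm:IG1} provides in the Hodgkin/corank-$\le 1$ situation, and, through a faithful representation $G\hookrightarrow U(N)$ together with Corollary~\ref{cor:cyc}, for the families reachable that way. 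The obstruction to arguing verbatim is that for a general family this universal membership genuinely fails (the preceding Remark, $G=\mathbb{T}^2$, $\F=\mathcal{T}$), and it is precisely here that finite generation of $M$ must enter.

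To bring finiteness to bear, from the cofiber sequence $\Sigma C(Y_k)\to\cone_{Y_k}\to\comp\to C(Y_k)$ (Lemma~\ref{lem:Milnor}) I would identify $\J^k(A,B)=\Ker(M\to\KK^G_*(A,B\otimes C(Y_k)))$, so that $\{\J^k\}$ is a descending chain of $R(G)$-submodules lying above the $(I_G^\F)$-adic filtration, and — by the easy inclusion together with the recurrence of each $H\in\F$ in the tower — is itself an $(I_G^\F)$-filtration in the sense that $(I_G^\F)\cdot\J^k\subseteq\J^{k+c}$ up to the standard reindexing. Over the Noetherian ring $R(G)$ with $M$ finitely generated, equivalence with the $(I_G^\F)$-adic filtration becomes a finiteness statement of Artin--Rees type: it amounts to showing the chain $\{\J^k\}$ is \emph{good}, i.e.\ that the images of $\J^k$ in each finite quotient $M/(I_G^\F)^nM$ vanish for $k\gg 0$.

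Establishing this goodness is the technical heart and the step I expect to be hardest, since $M/(I_G^\F)^nM$ need not be of finite length — the quotient ring $R(G)/(I_G^\F)^n$ is not Artinian when $\F$ contains positive-dimensional subgroups — so no naive descending-chain argument applies. To carry it out I would run the ABC spectral sequence (Proposition~\ref{prop:ABC}) for $F=\KK^G(A,-)$ against the Milnor tower: finite generation over the Noetherian $R(G)$ should force the relevant kernels and images to stabilize, verifying the injectivity hypothesis on $\Bad$ in Lemma~\ref{lem:comp}, which identifies $E^\infty$ with the graded quotients $\J^pM/\J^{p+1}M$. Since the $E^1$-page is built from the $\F$-induced $\J$-injectives $B\otimes C(G/H_p)\otimes\cdots$, its differentials are multiplication by Euler classes lying in the augmentation ideals exactly as in Example~\ref{exmp:torus}; matching the two associated graded modules then shows that the $\J$-filtration is $(I_G^\F)$-good, which yields $\J^k\subseteq(I_G^\F)^{\,k-k_0}M$ cofinally and completes the reverse inclusion.
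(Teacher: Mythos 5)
Your opening inclusion $(I_G^\F)^n\KK^G(A,B)\subseteq(\J_G^\F)^n(A,B)$ is correct and indeed needs no finiteness, and you have correctly located all the content in the reverse inclusion; but the mechanism you propose for that direction has a genuine gap, in fact two. First, the appeal to the ABC machinery is circular relative to this paper and cannot be repaired by the stabilization argument you sketch. In the paper, the injectivity hypothesis of Lemma \ref{lem:comp} is verified only in Lemma \ref{lem:ABC}, whose proof rests on Lemma \ref{lem:proisom}, which is itself proved by applying Theorem \ref{thm:IG2} together with Artin--Rees; so convergence of the ABC spectral sequence with the filtration $(\J_G^\F)^*$ lies downstream of the very theorem you are proving. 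Your independent substitute --- that ``finite generation over the Noetherian $R(G)$ should force the relevant kernels and images to stabilize'' --- fails twice over: Noetherianness gives the ascending chain condition, not the descending one needed for the images $i^r(D)$ and for the intersections $\Bad_{p,q}=\J^\infty F_q(N_p)$; and, more fundamentally, the terms of the exact couple for the Milnor tower are not finitely generated under the stated hypothesis. Indeed $E_{p,q}=\KK^G_{p+q}(A,I_p)\cong\KK^{H_p}_{p+q}(\Res_G^{H_p}A,\Res_G^{H_p}N_{p-1})$ by the adjunction of Proposition \ref{prp:indres}, and already for $p=1$ this is $\KK^{H_1}_*(A,B)$, whose finite generation is exactly what Theorem \ref{thm:IG2} does \emph{not} assume (this is the whole difference from Uuye's setting). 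So no Noetherian argument can be run on the pages of this spectral sequence.

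Second, your concluding step --- that the differentials ``are multiplication by Euler classes lying in the augmentation ideals exactly as in Example \ref{exmp:torus}'' --- is the universal statement you yourself discarded at the outset: the Remark preceding Theorem \ref{thm:IG2} shows that for $G=\mathbb{T}^2$ and $\F=\mathcal{T}$ one has $\KK^G(N,\comp)\cong R(G)\cdot\iota_0^\infty$ with $\iota_0^\infty\notin I_G\KK^G(N,\comp)$, so the tower maps are not augmentation-ideal multiples, and this failure is independent of any finiteness of $\KK^G_*(A,B)$. Your proposal never identifies a point where finite generation of $\KK^G_*(A,B)$ alone (as opposed to finite generation over all subgroups) actually does work. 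The paper's proof solves precisely this problem by a reduction you do not have: after replacing $\F$ by $\F_{\cyc}$ (Corollary \ref{cor:cyc}), it embeds $G$ into a product of unitary groups $U$, rewrites $\KK^G(A,B)\cong\KK^{U\ltimes U/G}(\Ind_G^UA,\Ind_G^UB)\subseteq\KK^{T\ltimes U/G}(\Ind_G^UA,\Ind_G^UB)$ for a maximal torus $T\leq U$, matches the $\J$-filtrations on the two sides by the slice theorem and Lemma \ref{lem:local}, and matches the augmentation-ideal filtrations by Lemma 3.4 of \cite{MR935523}; the point of this detour is that for a torus (equivalently, for the groupoid $T\ltimes U/G$) finite generation at the top level propagates to all closed subgroups (Lemma \ref{lem:fingen}), so that Corollary 2.5 of \cite{MR2887199} applies there. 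Some reduction of this kind --- trading $G$ for a group for which finite generation of $\KK^G_*$ controls all $\KK^H_*$ --- is the missing idea; without it, neither Theorem \ref{thm:IG1} nor the spectral sequence can bridge the gap.
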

Note that this is a direct consequence of Lemma \ref{lem:Milnor} and Corollary 2.5 of \cite{MR2887199} when $\KK^H_*(A,B)$ are finitely generated for any $H \leq G$ and $\ast=0,1$.

To show Theorem \ref{thm:IG2}, we prepare some lemmas.
\begin{lem}\label{lem:fingen}
Let $G$ be a compact Lie group, let $X$ be a compact $G$-space and let $A$, $B$ be $\sigma$-$G \ltimes X$-$\Cst$-algebras. We asssume that $\KK^{G \ltimes X}_*(A,B)$ are finitely generated for $\ast=0,1$. Then, the following holds:
\begin{enumerate}
\item Assume that $G$ satisfies Hodgkin condition and let $T$ be a maximal torus of $G$. Then $\KK^{T \ltimes X}_*(A,B)$ are finitely generated for $\ast=0,1$.
\item When $G=\mathbb{T}^n$, $\KK^{H \ltimes X}_*(A,B)$ are finitely generated for any $H \leq \mathbb{T}^n$.
\item For any cyclic subgroup $H$ of $G$, there is a $G$-space $Y$ such that $C(Y)$ is $(\J_G^H)^k$-injective for some $k>0$ and $\KK^{G \ltimes X}_*(A,B \otimes C(Y))$ are finitely generated for $\ast =0,1$.
\end{enumerate}
\end{lem}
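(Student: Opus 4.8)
The plan is to deduce all three statements from the Frobenius reciprocity of Proposition~\ref{prp:indres}, in its $G\ltimes X$-groupoid form, together with equivariant Bott periodicity and the Euler-class cofiber sequences it produces. For a closed subgroup $H\le G$ the adjunction gives a natural isomorphism
\[
\KK^{H\ltimes X}_*(\Res A,\Res B)\cong \KK^{G\ltimes X}_*(A,\,C(G/H)\otimes B),
\]
so in each case it suffices to control the right-hand side. Throughout I would use that $R(G)=\KK^G(\comp,\comp)$ is Noetherian (a finitely generated $\zahl$-algebra), that every group in sight is an $R(G)$-module via $R(G)\to R(G\ltimes X)$, and that by Segal's theorem $R(H)=K^0_G(G/H)$ is module-finite over $R(G)$; finite generation over $R(H)$ will then follow from finite generation over $R(G)$ together with annihilation by a suitable ideal.

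For (2), since $\mathbb{T}^n$ is connected the homogeneous space $G/H$ is again a torus $\mathbb{T}^k$, and I would write $C(\mathbb{T}^k)=\bigotimes_{i=1}^{k}C(S(V_i))$ for weight-one characters $z_1,\dots,z_k$ spanning $(\mathbb{T}^n/H)^\wedge$. Each factor lies in a cofiber sequence $C_0(V_i)\otimes B\to B\to C(S(V_i))\otimes B$ on which $\KK^{G\ltimes X}_*(A,-)$ acts, after Bott periodicity, by multiplication by the Euler class $1-z_i\in R(G)$. Tensoring the $k$ sequences gives a finite Koszul tower whose spectral sequence has $E_2$-page the Koszul homology of the regular sequence $(1-z_1,\dots,1-z_k)$ on $\KK^{G\ltimes X}_*(A,B)$, that is $\Tor^{R(G)}_*(\KK^{G\ltimes X}_*(A,B),R(H))$, and converges to $\KK^{H\ltimes X}_*(A,B)$. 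Each $\Tor$-term is a finitely generated $R(G)$-module annihilated by $\Ker(R(G)\to R(H))$, hence finitely generated over $R(H)$, and finiteness passes to the abutment.

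For (1) the space $G/T$ is the flag manifold rather than a product of spheres, so I would not resolve it geometrically but invoke that under the Hodgkin hypothesis $R(T)=K^0_G(G/T)$ is a \emph{free} $R(G)$-module of rank $|W|$ (Pittie--Steinberg). The Künneth/universal-coefficient spectral sequence then degenerates and yields $\KK^{T\ltimes X}_*(A,B)\cong \KK^{G\ltimes X}_*(A,B)\otimes_{R(G)}R(T)$, which is finitely generated over $R(T)$ since $R(T)$ is module-finite over $R(G)$. This is the degenerate case of the mechanism in (2), the Koszul resolution collapsing to a finite free module precisely because a Hodgkin group has regular representation ring.

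For (3) I would take $Y=G/H$, so that $C(Y)=\Ind_H^G\comp$ is $\J_G^H$-injective by the computation in the proof of Theorem~\ref{thm:decomp} (a truncated join as in the proof of Corollary~\ref{cor:cyc} supplies higher injectivity powers if needed); Frobenius reciprocity then reduces the claim to finite generation of $\KK^{H\ltimes X}_*(\Res A,\Res B)$ for a cyclic $H$. The structural observation driving the reduction is that a cyclic $H=\overline{\langle g\rangle}$ with $g$ in a maximal torus $T$ satisfies $H\le T$, so that $\Res^G_H=\Res^T_H\circ\Res^G_T$, and when $G$ is Hodgkin one applies (1) to pass from $G$ to $T$ and then (2) to pass from $T$ to $H$. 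I expect the main obstacle to be the general compact Lie group, where $G$ is neither connected nor Hodgkin and $g$ need not lie in a maximal torus: then the torsion of $H/H^0\cong\zahl/\ell$ makes $R(H)$ non-regular (elements such as $1-t^2$ become zero-divisors), $R(H)$ has infinite projective dimension over $R(G)$, and the finite-free-resolution argument of (1)--(2) is unavailable. The plan here is to reduce to the Hodgkin situation through a faithful representation $G\hookrightarrow U(N)$, inside which the abelian $H$ is conjugate into a maximal torus, and to build $Y$ by pulling back the corresponding flag--torus data so that its $G$-CW structure carries only cyclic isotropy and the computation of $\KK^{G\ltimes X}_*(A,B\otimes C(Y))$ stays finite. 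Reconciling this $U(N)$-geometry with the merely $G$-equivariant coefficients $A,B$, while keeping every stage finitely generated over $R(G)$, is the delicate point on which the lemma ultimately turns; once it is arranged, Noetherianity of $R(G)$ and Segal's finiteness of $R(H)$ over $R(G)$ finish the proof.
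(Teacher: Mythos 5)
Parts (1) and (2) of your proposal are, in substance, the paper's own argument. For (1) the paper cites the $\KK^G$-equivalence $C(G/T)\sim\comp^{|W_G|}$ for Hodgkin groups; this is the same Pittie--Steinberg input you invoke, but used as a $\KK$-equivalence of coefficient algebras, which sidesteps the question of whether a K\"unneth/UCT spectral sequence exists for the groupoid-equivariant theory $\KK^{G\ltimes X}$ (it is not off-the-shelf in that generality, so your formulation has a small foundational debt that the paper's does not). For (2) the paper runs your Euler-class cofiber sequences one circle at a time through six-term exact sequences and Noetherianity of $R(G)$, rather than assembling them into a Koszul spectral sequence; same mechanism, different packaging.

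The genuine gap is in (3), and you have named it without closing it. Taking $Y=G/H$ reduces the claim, via Frobenius reciprocity, to finite generation of $\KK^{H\ltimes X}_*(\Res A,\Res B)$ for a cyclic subgroup $H$ of a \emph{general} compact Lie group $G$ --- but that is exactly the statement one cannot prove (it is the reason the lemma only asserts existence of \emph{some} $Y$), so outside the Hodgkin case this reduction is circular. Your fallback --- a faithful embedding $G\hookrightarrow U(N)$ and a pulled-back flag/torus space --- is explicitly left unfinished (``the delicate point on which the lemma ultimately turns''), and as sketched it fails on the injectivity half of the claim: to get $(\J_G^H)^k$-injectivity of $C(Y)$ from Corollary \ref{cor:cyc}(2) you need every cyclic subgroup of every isotropy group of $Y$ to be subconjugate to $H$ \emph{inside $G$}, and a merely faithful representation does not ensure this, since elements of $G$ may be conjugate in $U(N)$ without being conjugate in $G$ (e.g.\ $i$ and $j$ in $Q_8\subset U(2)$ have the same eigenvalues; then the isotropy group $G\cap uHu^{-1}$ of $U(2)/\langle i\rangle$ at a suitable point contains $\langle j\rangle$, which is not $G$-subconjugate to $\langle i\rangle$, and the route to injectivity collapses). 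The paper's proof fixes precisely this: it chooses a \emph{finite family} of representations $\pi_i\colon G\to U(n_i)$ whose characters separate the conjugacy classes of $G$, sets $U=\prod U(n_i)$ and $Y=U/H$, so that conjugacy in $U$ implies conjugacy in $G$; then every cyclic subgroup of every stabilizer $G\cap uHu^{-1}$ is $G$-subconjugate to $H$ and Corollary \ref{cor:cyc}(2) gives $(\J_G^H)^k$-injectivity. Finite generation of $\KK^{G\ltimes X}_*(A,B\otimes C(U/H))$ then follows with no ``reconciliation'' problem at all, by your own two mechanisms applied to $G$-equivariant coefficients throughout: $U$ is Hodgkin, so $C(U/T)$ is $\KK^U$- (hence $\KK^G$-) equivalent to $\comp^{|W_U|}$, and $U/H\to U/T$ is a principal $T/H$-bundle, climbed by circle-bundle Gysin triangles as in (2); at no stage does one need $\KK^{H\ltimes X}_*(\Res A,\Res B)$ itself to be finitely generated.
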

\begin{proof}
First, (1) follows from the fact that $C(G/T)$ is $\KK^G$-equivalent to $\comp ^{|W_G|}$ (which is essentially proved in p.31 of \cite{MR849938}). To see (2), first we consider the case that $\mathbb{T}^n/H$ is isomorphic to $\mathbb{T}$. Then, the assertion follows from the six-term exact sequence of the functor $\KK^{\mathbb{T}^n \ltimes X}(A,B \otimes \blank)$ associated to the exact triangle
$$SC(\mathbb{T}^1) \to C_0(\real ^2) \to \comp \to C(\mathbb{T}^1).$$
In general $\mathbb{T}^n/H$ is isomorphic to $\mathbb{T}^m$. By iterating this argument $m$ times, we immediately obtain the conclusion.

Finally we show (3). Since the space of conjugacy classes of $G$ is homeomorphic to the quotient of a finite copies of the maximal torus $T$ of $G_0$ by a finite group, there is a finite family of class functions separating conjugacy classes of $G$. A moment thought will give you a finite faithful family of representations $\{ \pi _i:G \to U(n_i) \}$ such that $\{ \chi (\pi _i) \}$ separates the conjugacy classes of $G$. Then, two elements $g _1$ , $g_ 2$ in $G$ are conjugate in $G$ if and only if so are in $U:=\prod U(n_i)$ (here $G$ is regarded as a subgroup of $U$ by $\prod \pi _i$). Set $\F := \{ L \leq G \cap gHg^{-1} \mid g \in U\}$. Then $G$ acts on $U /H$ $\F$-freely and every subgroup in $\F _\cyc$ is contained in a conjugate of $H$. By Corollary \ref{cor:cyc} (2), $C(U/H)$ is $(\J_G^H)^k$-injective for some $k>0$. Moreover, $\KK^G_*(A,B \otimes C(U/H))$ are finitely generated $R(G)$-modules. To see this, choose a maximal torus $T$ of $U$ containing $H$. Then $U/H$ is a principal $T/H$-bundle over $U/T$ and we can apply the same argument as (2).
\end{proof}

\begin{lem}\label{lem:local}
Let $X$ be a compact $G$-space and let $X_1,\dots ,X_n$ be closed $G$-subsets of $X$ such that $X_1 \cup \cdots \cup X_n=X$. Then, in the category $\sigma \Kas ^{G \ltimes X}$, the filtration associated to the family of ideals $\J _{X_1,\dots ,X_n}:=\{ \Ker \Res _{G \ltimes X}^{G \ltimes X_i} \}$ is trivial (i.e.\ there is $k>0$ such that $(\J _{X_1,\dots ,X_n})^k=0$).
\end{lem}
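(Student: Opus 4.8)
The plan is to compute a geometric phantom tower, in the sense of Section \ref{section:2}, for the family $\J_{X_1,\dots,X_n}$ and to read the filtration off from it. Write $U_i := X \setminus X_i$ for the open complement, so that $U_1 \cap \cdots \cap U_n = X \setminus (X_1 \cup \cdots \cup X_n) = \emptyset$. For a $\sigma$-$G \ltimes X$-algebra $B$ and a closed invariant subset $Y \subseteq X$, restriction to $Y$ is realized by the ideal--quotient extension
\[
0 \to C_0(X \setminus Y) \cdot B \to B \to B|_Y \to 0,
\]
which gives an exact triangle in $\sigma \Kas^{G \ltimes X}$. I would first record two facts about $\J_i := \Ker \Res_{G \ltimes X}^{G \ltimes X_i}$: the quotient $B|_{X_i}$, being supported on $X_i$, is $\J_i$-injective, and the inclusion $C_0(U_i) \cdot B \hookrightarrow B$ lies in $\J_i$ and is $\J_i$-coversal. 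Both follow from the identification $\KK^{G \ltimes X}(A, M|_{X_i}) \cong \KK^{G \ltimes X_i}(A|_{X_i}, M|_{X_i})$ for $X_i$-supported $M$, under which post-composition with the quotient map is the functor $\Res_{G \ltimes X}^{G \ltimes X_i}$; hence $f^*$ vanishes on maps into $B|_{X_i}$ whenever $\Res_{G \ltimes X}^{G \ltimes X_i} f = 0$, giving injectivity, while the fiber of a $\J_i$-monic map is $\J_i$-coversal by Lemma 3.5 of \cite{MR2563811}. In particular $\sigma \Kas^{G \ltimes X}$ has enough $\J_i$-injectives, realized geometrically.

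Next I would build, for an arbitrary object $B$, the phantom tower associated to a sequence $i_1, i_2, \dots$ in $\{1,\dots,n\}$ in which each index occurs infinitely often. Setting $V_k := U_{i_1} \cap \cdots \cap U_{i_k}$ and $N_k := C_0(V_k) \cdot B$, with $N_0 = B$, the extension
\[
0 \to N_k \xrightarrow{\iota_{k-1}^k} N_{k-1} \to N_{k-1}|_{X_{i_k}} \to 0
\]
exhibits $I_k := N_{k-1}|_{X_{i_k}}$ as a $\J_{i_k}$-injective object and $\iota_{k-1}^k$ as a $\J_{i_k}$-coversal morphism, exactly as in the previous paragraph; note that $N_k|_{X_{i_k}} = C_0(V_{k-1} \cap U_{i_k})|_{X_{i_k}} = 0$ since $U_{i_k} \cap X_{i_k} = \emptyset$. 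This is therefore a legitimate phantom tower for $B$ relative to $\{\J_i\}$, so by the computation recalled in Section \ref{section:2} the filtration is
\[
\J^p(A,B) = \Im\big( (\iota_0^p)_* : \KK^{G \ltimes X}(A, N_p) \to \KK^{G \ltimes X}(A, B) \big)
\]
for every $A$, where $\iota_0^p = \iota_0^1 \circ \cdots \circ \iota_{p-1}^p$ is the inclusion of the ideal $N_p$.

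Finally I would invoke the geometric vanishing. Since $V_k$ is a decreasing intersection of the $U_i$ and each index occurs in the sequence, there is a step $k_0$ — depending only on the chosen sequence, not on $A$ or $B$ — by which all of $U_1, \dots, U_n$ have been used, so that $V_{k_0} \subseteq U_1 \cap \cdots \cap U_n = \emptyset$ and hence $N_{k_0} = C_0(\emptyset) \cdot B = 0$. Consequently $\J^{k_0}(A,B) = \Im((\iota_0^{k_0})_*) = 0$ for all $A$ and $B$, which is exactly the assertion $(\J_{X_1,\dots,X_n})^{k_0} = 0$; since all filtrations associated to the family are equivalent, the stated filtration is trivial.

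The bookkeeping of open and closed sets is routine, and the product-of-ideals identity $\prod_i C_0(U_i) = C_0(\bigcap_i U_i) = 0$ does the real work. The one point deserving care — and the main thing to pin down — is the first paragraph's claim that restriction of the groupoid $\KK$-category to a closed invariant subset is implemented by the ideal--quotient extension, that is, that $X_i$-supported algebras are $\J_i$-injective and the vanishing ideals are $\J_i$-coversal. This rests on the behaviour of $\Res_{G \ltimes X}^{G \ltimes X_i}$ on $X_i$-supported algebras, an adjunction-free analogue of the enough-injectives argument in Theorem \ref{thm:decomp}; once it is granted, everything else is formal.
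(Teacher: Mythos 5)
Your injectivity claim is fine: pushing forward along the closed inclusion $X_i \hookrightarrow X$ is right adjoint to $\Res_{G \ltimes X}^{G \ltimes X_i}$ (a Kasparov cycle whose target is $X_i$-supported is automatically a cycle over $X_i$), so every $X_i$-supported object is $\Ker \Res_{G \ltimes X}^{G \ltimes X_i}$-injective, in exact parallel with the proof of Theorem \ref{thm:decomp}. The gap is in promoting the $\Cst$-algebra extensions $0 \to N_k \to N_{k-1} \to N_{k-1}|_{X_{i_k}} \to 0$ to exact triangles in $\sigma \Kas^{G \ltimes X}$. In the Kasparov category an ideal--quotient extension yields an exact triangle --- equivalently, the literal kernel computes the mapping-cone fiber --- only when the quotient map admits a $G$-equivariant completely positive contractive splitting. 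For an arbitrary separable $\sigma$-$G \ltimes X$-$\Cst$-algebra this is not automatic for the restriction map $B \to B|_{X_i}$, and in the intended application (the proof of Theorem \ref{thm:IG2}) the coefficient algebras are induced from arbitrary separable $G$-$\Cst$-algebras, so no nuclearity is available to produce such a splitting. Without the exact triangle, Lemma 3.5 of \cite{MR2563811} does not apply to the kernel inclusion: that lemma makes the \emph{triangulated fiber} of a $\J$-monic map coversal, and your argument silently identifies this abstract fiber with the literal kernel $C_0(U_{i_k}) N_{k-1}$. Consequently both the coversality of $\iota_{k-1}^{k}$ and the identity $\J^p(A,B) = \Im\bigl((\iota_0^p)_*\bigr)$ --- precisely the two steps that let the geometric vanishing $N_{k_0}=0$ imply $(\J_{X_1,\dots,X_n})^{k_0}=0$ --- are unjustified. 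Replacing the kernels by abstract fibers restores the formal phantom tower (enough injectives does hold, via the adjunction), but then destroys the geometric vanishing, so your strategy genuinely needs semisplitness of these quotients.

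The paper avoids this issue by never leaving the cycle level. It reduces to two closed sets $X = X_1 \cup X_2$ and works in the Cuntz picture, where $\KK^{G \ltimes X}(A,B)$ consists of homotopy classes of $G \ltimes X$-equivariant $\ast$-homomorphisms $q_{s,X}A \to q_{s,X}B$ and the Kasparov product is literal composition. Given $\xi_1$ killed by restriction to $X_1$ and $\xi_2$ killed by restriction to $X_2$, one pulls back to the stretched space $X' = X_1 \times \{0\} \cup (X_1 \cap X_2) \times [0,1] \cup X_2 \times \{1\}$, which is homotopy equivalent to $X$, and uses the trivializing homotopies to choose representatives $\varphi_1'$, $\varphi_2'$ vanishing identically over $X' \cap (X \times [0,1/2])$ and $X' \cap (X \times [1/2,1])$ respectively; then $\varphi_2' \circ \varphi_1' = 0$ on the nose, so $\xi_2 \circ \xi_1 = 0$. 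To rescue your approach you would either have to prove that the restriction maps occurring in your tower are equivariantly semisplit, or rerun the factorization argument at the level of cycles in this fashion.
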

\begin{proof}
It suffices to show the following: Let $X$ be a compact $G$-space and $X_1$, $X_2$ be a closed $G$-subspaces such that $X=X_1 \cup X_2$. For separable $\sigma$-$G \ltimes X$-$\Cst$-algebras $A$, $B$, $D$ and $\xi _1 \in \KK^{G \ltimes X}(A,B)$, $\xi _2 \in \KK^{G \ltimes X}(B,D)$ such that $\Res _{G \ltimes X} ^{G  \ltimes X_1}\xi _1 =0$ and $\Res _{G \ltimes X} ^{G  \ltimes X_2} \xi _2=0$ holds, we have $\xi _2 \circ \xi _1 =0$. 

To see this, we use the Cuntz picture. Let $\Kop _G :=\Kop (L^2(G)^\infty)$ and let $q_{s,X}A$ be the kernel of the canonical $\ast$-homomorphism 
$$((A \otimes \Kop _G) \ast _X (A \otimes \Kop _G)) \otimes \Kop _G \to (A \otimes \Kop _G) \otimes \Kop _G$$
for a $G \ltimes X$-$\Cst$-algebra $A$. Then, $\KK^{G \ltimes X}(A,B)$ is isomorphic to the set of homotopy classes of $G \ltimes X$-equivariant $\ast$-homomorphisms from $q_{s,X}A$ to $q_{s,X}B$ and the Kasparov product is given by the composition. 

Let $X'$ be the $G$-space $X_1 \times \{ 0 \}  \cup (X_1 \cap X_2) \times [0,1] \cup X_2 \times \{ 1 \} \subset X \times [0,1]$ and let $p:X' \to X$ be the projection. Note that $p$ is a homotopy equivalence. Let $\varphi _1: q_{s,X}A \to q_{s,X}B$ be a $G \ltimes X$-equivariant $\ast$-homomorphism such that $[\varphi _1]=\xi _1$. By using a homotopy trivializing $\varphi _1|_{X_1}$, we obtain a $G \ltimes X'$-equivariant $\ast$-homomorphism $\varphi _1':q_{s,X'}p^*A\to q_{s,X'}p^*B$ such that $[\varphi _1']=\xi_1$ under the isomorphism $\KK^{G \ltimes X}(A,B) \cong \KK^{G \ltimes X'}(p^*A,p^*B)$ and $\varphi _1'=0$ on $X' \cap X \times [0,1/2]$. Similarly, we get $\varphi _2': p^*q_sB \to p^*q_sD$ such that $[\varphi _2'] =\xi _2$ and $\varphi _2'=0$ on $X' \cap X \times [1/2,1]$. Then, $\xi _2 \circ \xi _1=[\varphi _2' \circ \varphi _1']=0$.
\end{proof}

\begin{proof}[Proof of Theorem \ref{thm:IG2}]
By Corollary \ref{cor:cyc}, we may replace $\F$ with $\F _{\mathrm{cyc}}$. When $G=\mathbb{T}^n$, the conclusion follows from Lemma \ref{lem:fingen} (2) and Corollary 2.5 of \cite{MR2887199}.

For general $G$, let $U$ be the Lie group as in the proof of Lemma \ref{lem:fingen} (3) and let $T$ be a maximal torus of $U$. Consider the inclusion
\ma{\KK^G(A,B) &\cong \KK^{U \ltimes U/G}(\Ind _G^UA, \Ind _G^UB) \\
& \subset KK^{T \ltimes U/G}(\Ind _G^UA , \Ind _G^UB ).}
Set $\tilde{\F}$ and $\F'$ the family of $G$ and $T$ respectively given by 
$$\tilde{\F}:=\{L \leq G \cap gHg^{-1} \mid H \in \F, g \in U \}, \ \F':=\{ L \leq T \cap gHg^{-1} \mid H \in \F , g \in U \}.$$
Note that Corollary \ref{cor:cyc} implies that the filtration $(\J^{\tilde{F}}_G)^*$ is equivalent to $(\J _G^\F)^*$ since $\F _{\cyc}=\tilde{\F}_\cyc$.

Consider the family of homological ideals
$$\J_{T \ltimes U/G}^{\F'}:=\{ \Ker \Res _{T \ltimes U/G}^{H \ltimes U/G} \mid H \in \F' \}.$$
We claim that the restriction of the filtration $(\J _{T \ltimes U/G}^{\F'})^*(\Ind _G^UA , \Ind _G^UB )$ on $\KK^G(A,B)$ is equivalent to $(\J_G^\F)^*(A,B)$.

Pick $L \in \F'$. The slice theorem (Theorem 2.4 of \cite{MR2292634}) implies that there is a family of closed $L$-subspaces $X_1,\dots ,X_n$ of $U/G$ and $x _i \in X_i$ such that $\bigcup X_i=U/G$ and the inclusions $Lx_i \to X_i$ are $L$-equivariant homotopy equivalences. Now we have canonical isomorphisms 
\ma{\KK^{L \ltimes X_i}(\Ind _G^UA|_{X_i},\Ind _G^UB|_{X_i}) \xra{\Res _{X_i}^{Lx_i}}& \KK^{L \ltimes L x_i}(\Ind _G^UA|_{L x_i},\Ind _G^UB|_{L x_i})\\
\to& \KK^{gLg^{-1} \cap G}(A,B)}
such that $\Res _G^{gLg^{-1}\cap G}=\Res _{U \ltimes U/G}^{L \ltimes X_i}$ under these identifications (here $g \in U$ such that $gL=x_i \in U/L$). Now, we have $gLg^{-1}\cap G \in \tilde{\F}$. Therefore, by Lemma \ref{lem:local}, we obtain $(\J^{\tilde \F}_G)^k \subset \J^{\F'}_{T \ltimes U/G}$ for some $k>0$. Conversely since $\F = \F_{\cyc}$, for any $L \in \tilde {\F}$, we can take $g \in U$ such that $g L g^{-1} \in \F'$. Hence $\KK^G(A,B) \cap \J^{\F'}_{T \rtimes U/G}(A,B) \subset \J^{\tilde \F}_G(A,B)$.

Similarly, the filtration $(I_G^\F)^*\KK^G(A,B)$ is equivalent to the restriction of $(I_T^{\F '})^*\KK^{T \ltimes U/G}(\Ind _G^UA,\Ind _G^UB)$. Actually, by Lemma 3.4 of \cite{MR935523}, the $I_G^\F$-adic and $I_U^{\F''}$-adic topologies on $\KK^G(A,B)$ (here $\F''$ is the smallest family of $U$ containing $\F'$) coincide and so do the $I_U^{\F'}$-adic and $I_T^{\F'}$-adic topologies on $\KK^{T \ltimes U/G}(\Ind _G^UA,\Ind _G^U B)$. 

Finally, the assertion is reduced to the case of $G=\mathbb{T}^n$.
\end{proof}

Theorem \ref{thm:IG2} can be regarded as a categorical counterpart of the Atiyah-Segal completion theorem. Since Theorem \ref{thm:IG2} holds without assuming that $\KK^H_*(A,B)$ are finitely generated for every $H \leq G$, we also obtain a refinement of the Atiyah-Segal theorem (Corollary 2.5 of \cite{MR2887199}).

\begin{lem}\label{lem:proisom}
Let $A,B$ be separable $\sigma$-$G$-$\Cst$-algebras such that $\KK^G_*(A,B)$ are finitely generated for $\ast=0,1$. Then there is a pro-isomorphism
$$\{ \KK^G(A,B)/(\J_G^\F)^p(A,B) \}_{p \in \zahl _{>0}} \to \{ \KK^G(A,\tilde{B}_p) \}_{p \in \zahl _{>0}}.$$
\end{lem}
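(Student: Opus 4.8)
The plan is to turn the exact triangle $N_p \to B \to \tilde{B}_p \to \Sigma N_p$ into an explicit description of the tower $\{\KK^G(A,\tilde{B}_p)\}$, read off the map in the statement as a levelwise injection, and then reduce everything to the vanishing of the cokernel tower. First I would use Lemma \ref{lem:Milnor} to identify $N_p \cong B \otimes \cone_{X_p}$ with $X_p = \bigast_{i=1}^p G/H_i$ and $\iota_0^p = \id_B \otimes \ev_0$; since the cofibre of $\ev_0 \colon \cone_{X_p} \to \comp$ is $C(X_p)$, this identifies $\tilde{B}_p \cong B \otimes C(X_p)$ and the structure map $B \to \tilde{B}_p$ with $\id_B \otimes \pi$ for the unital inclusion $\pi \colon \comp \to C(X_p)$. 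Applying $\KK^G(A,\blank)$ and recalling $\Im (\iota_0^p)_* = (\J_G^\F)^p(A,B)$ from the phantom-tower construction, the long exact sequence collapses to
\[
0 \to \KK^G_n(A,B)/(\J_G^\F)^p(A,B) \xra{\ \beta_p\ } \KK^G_n(A,\tilde{B}_p) \to C_{p} \to 0, \qquad C_p := \Cok\big(\pi_* \colon \KK^G_n(A,B) \to \KK^G_n(A, B \otimes C(X_p))\big).
\]
These sequences are natural in $p$ for the restriction maps $C(X_{p+1}) \to C(X_p)$ and the quotient maps, so $\beta_\bullet$ is a morphism of towers whose levelwise kernel vanishes.

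Consequently $\beta_\bullet$ is a pro-isomorphism if and only if $\{C_p\}$ is pro-zero, i.e.\ for every $p$ there is $q \geq p$ with $C_q \to C_p$ the zero map. I would reformulate this geometrically: $\Im(C_q \to C_p)$ is the image in $C_p$ of the classes in $\KK^G_n(A, B\otimes C(X_p))$ that extend over $X_q$, so the statement is that for $q \gg p$ every class on $X_p$ extending to $X_q$ is pulled back from $\KK^G_n(A,B)$ along the collapse $X_p \to \mathrm{pt}$. Equivalently, on the tower $\{\KK^G(A,N_p)\}$ one must show that $(\iota_0^p)_*$ is injective on $\Im(\iota_p^q)_*$ for $q \gg p$, since $(\iota_0^p)_*$ carries $\Im(\iota_p^q)_*$ onto $(\J_G^\F)^q(A,B)$ with kernel exactly $C_p \cap \Im(\iota_p^q)_*$.

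The hard part will be this pro-vanishing, where the only available finiteness is that of $\KK^G_\ast(A,B)$ itself; note that the intermediate groups $\KK^G(A,N_p) = \KK^G(A, B\otimes \cone_{X_p})$ are assembled from $\KK^H(\Res A, \Res B)$ over the cells $G/H$ of $X_p$ and so need not be finitely generated. To circumvent this I would copy the reduction in the proof of Theorem \ref{thm:IG2}: use $\KK^G(A,B) \cong \KK^{U\ltimes U/G}(\Ind_G^U A,\Ind_G^U B) \subseteq \KK^{T\ltimes U/G}(\Ind_G^U A,\Ind_G^U B)$ for $U = \prod U(n_i)$ with maximal torus $T$, and invoke Lemma \ref{lem:fingen} to propagate finite generation of $\KK^{T\ltimes U/G}_\ast$ to all closed subgroups of $T$. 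In this enhanced setting a Mayer–Vietoris induction over the cells of $X_p$ makes every $\KK(\,\cdot\, \otimes C(X_p))$ finitely generated over the Noetherian ring $R(T)$, Theorem \ref{thm:IG2} identifies the image filtration $(\J_G^\F)^p$ with the $I_G^\F$-adic filtration, and the pro-vanishing of $\{C_p\}$ becomes the statement that $\{\KK^G(A, B\otimes C(X_p))\}$ realizes the $I_G^\F$-adic pro-system on $\KK^G_n(A,B)$.

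This last assertion is the (generalized) Atiyah–Segal completion theorem in the finitely generated case, which I would derive from the Artin–Rees lemma over $R(T)$ — equivalently cite Corollary 2.5 of \cite{MR2887199} together with Lemma 3.4 of \cite{MR935523} — and then transport back along the direct-summand inclusion $\KK^G(A,B) \subseteq \KK^{T\ltimes U/G}(\Ind_G^U A,\Ind_G^U B)$ exactly as in Theorem \ref{thm:IG2}. This yields that $\{C_p\}$ is pro-zero, and hence that $\beta_\bullet$ is the desired pro-isomorphism.
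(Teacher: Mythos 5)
Your first two paragraphs are exactly the paper's own reduction: identify $\tilde{B}_p \cong B \otimes C(X_p)$, read off the levelwise injection $\KK^G(A,B)/(\J_G^\F)^p(A,B) \to \KK^G(A,\tilde{B}_p)$ from the long exact sequence, and reduce everything to pro-vanishing of the cokernel tower, i.e.\ of the boundary terms. Your diagnosis of the obstruction is also correct: the Milnor-tower groups need not be finitely generated. The gap is in your proposed fix. Passing to $T \ltimes U/G$ does \emph{not} restore finite generation, because Lemma \ref{lem:fingen} (1)--(2) propagate finite generation only for the \emph{fixed} pair $(\Ind_G^U A, \Ind_G^U B)$ restricted to closed subgroups $L \leq T$; they say nothing about the tower algebras $\Ind_G^U(B \otimes C(X_p))$. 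Your Mayer--Vietoris induction runs over the cells of $X_p = \bigast_{i \leq p} G/H_i$, whose orbit types are $G/K$ with $K \in \F$, and these reappear after any re-equivariantization: already $\KK^G(A, B \otimes C(X_1)) = \KK^{H_1}(\Res_G^{H_1}A,\Res_G^{H_1}B)$ by Proposition \ref{prp:indres}, and the cell of type $T/L \times D^n$ in $U \times_G X_p$ contributes (up to suspension) $\KK^{u^{-1}Lu}(A,B)$ with $u^{-1}Lu \in \F$, since the fibre of $\Ind_G^U(B \otimes C(G/K))$ over $uG \in U/G$ is just $B \otimes C(G/K)$. Moreover the $R(T)$-action on such a piece factors through the surjection $R(T) \to R(L) \cong R(u^{-1}Lu)$, so finite generation of these pieces over $R(T)$ is literally the hypothesis ``$\KK^K_*(A,B)$ finitely generated for all $K \in \F$'' of Corollary 2.5 of \cite{MR2887199} --- precisely the assumption this lemma is designed to remove. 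In other words, your key claim either begs the question or is false; and as a secondary point, Corollary 2.5 of \cite{MR2887199} is a statement about compact Lie groups, not about the groupoid $T \ltimes U/G$, so it could not be cited as is even if finite generation were available.

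The missing idea in the paper is to change the \emph{tower}, not the equivariance. Using Lemma \ref{lem:fingen} (3) (after replacing $\F$ by $\F_{\cyc}$ via Corollary \ref{cor:cyc}), one builds the phantom tower and castle from the spaces $U/H_k$ instead of the orbits $G/H_k$: these are still compact $\F$-spaces with $H$-fixed points occurring cofinally, so $N_p' = B \otimes \bigotimes_{i \leq p}\cone_{U/H_i}$ and $\tilde{B}_p' = B \otimes C(\bigast_{i\leq p}U/H_i)$ give a legitimate approximation of $B$; but now, because $U/H_k \to U/T$ is an iterated circle bundle and $C(U/T)$ is $\KK^G$-equivalent to a finite direct sum of copies of $\comp$, finite generation of $\KK^G_*(A,B)$ propagates through the resulting exact triangles to all of $\KK^G_*(A,N_p')$ and $\KK^G_*(A,\tilde{B}_p')$ as $R(G)$-modules, with no subgroup data entering at all. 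With that in hand, Theorem \ref{thm:IG2} applied to the pairs $(A,N_p')$, the Artin--Rees lemma over the noetherian ring $R(G)$, and the $(\J_G^\F)^r$-injectivity of $\tilde{B}_p'$ (which forces $(I_G^\F)^r \Im \partial_p = 0$) show that the boundary tower is pro-zero, which is your pro-vanishing statement. So your outline can be salvaged by replacing ``keep the Milnor tower and enlarge the equivariance'' with ``replace $G/H_k$ by $U/H_k$ in the tower''; the rest of your argument then runs as in the paper.
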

\begin{proof}
By Lemma \ref{lem:fingen} (3), there are compact $G$-spaces $\{ X_k \}_{k \in \zahl _{>0}}$ such that $\KK^G_*(A,B \otimes C(X_k))$ are finitely generated for $\ast=0,1$, each $C(X_i)$ is $(\J_G^\F)^r$-injective for some $r>0$ and for any $H \in \F$ there are infinitely many $X_k$'s such that $X_k^H\neq \emptyset$. Set
$$N_p':= B \otimes \bigotimes _{i=1}^p \cone _{X_i},\ I_p':=N_{p-1}' \otimes C(X_p),\ \tilde{B}_p':=B \otimes C(\bigast _{i=1}^p X_i)$$
and $N':=\hoprojlim N_p'$, $\tilde{B}':=\hoprojlim \tilde{B}'_p$. By the same argument as Theorem \ref{thm:homideal}, we obtain that
$$S\tilde{B} \to N \to B \to \tilde{B}$$
is the approximation of $B$ with respect to $(\FC,\ebk{\FI}^\loc)$. Moreover, by the six-term exact sequence, we obtain that $\KK^G_*(A,\tilde{B}_p')$ are finitely generated $R(G)$-modules.

Consider the long exact sequence of projective systems
$$\{ \KK^G_*(A,S\tilde{B}'_p) \}_p \xra{\partial _p} \{ \KK^G_*(A,N_p') \}_p \xra{(\iota_0^p)_*} \{ \KK^G_*(A,B) \}  \xra{(\alpha _0^p)_*} \{ \KK^G_*(A,\tilde{B}'_p) \}_p.$$
Then, $\{ \Im (\iota _0^p)_* \}_p = \{ \Ker (\alpha _0^p)_* \} _p$ is pro-isomorphic to $(\J_G^\F)^\ast (A,B) $. Actually, for any $p>0$ there is $r>0$ such that $(\J_G^\F)^r (A,B) \subset \Ker (\alpha_0^p)_* =\Im (\iota _0^p)_* \subset (\J _G^\F)^p (A,B)$ since $\tilde{B}_p'$ is $(\J_G^\F)^r$-injective for some $r>0$.

Therefore, it suffices to show that the boundary map $\{ \partial_p \}$ is pro-zero. Apply Theorem \ref{thm:IG2} and the Artin-Rees lemma for finitely generated $R(G)$-modules $M:=\KK^G(A,N_p')$ and $N:=\Im  \partial _p$. Since $\tilde{B}_p'$ is $(\J_G^\F)^r$-injective for some $r >0$, there is $k>0$ and $l>0$ such that 
$$\Im (\iota _p^{p+l})_* \cap N=(\J_G^\F)^l(A,N_p') \cap N \subset (I_G^\F)^{k}M \cap N \subset (I_G^\F)^r N=0.$$
Consequently, for any $p>0$ there is $l>0$ such that $\Im \iota _p^{p+l} \circ \partial _{p+l}=0$.
\end{proof}

\begin{remk}
It is also essential for Lemma \ref{lem:proisom} to assume that $\KK^G_*(A,B)$ are finitely generated. Actually, by Theorem \ref{thm:IG1}, the pro-isomorphism in Lemma \ref{lem:proisom} implies the completion theorem when $G=\mathbb{T}^1$ and $\F=\mathcal{T}$. On the other hand, since the completion functor is not exact in general, there is a $\sigma$-$\Cst$-algebra $A$ such that the completion theorem fails for $\K_*^G(A)$. For example, let $A$ be the mapping cone of $\oplus \lambda ^n : \oplus \comp \to \oplus \comp$. Then, the completion functor for the exact sequence $0 \to \oplus R(G) \to \oplus R(G) \to K_0^G(A) \to 0$ is not exact in  the middle (cf. Example 8 of \cite[Chapter 86]{StackProject} ). 
\end{remk}

\begin{lem}\label{lem:ABC}
Let $A,B$ be separable $\sigma$-$G$-$\Cst$-algebras such that $\KK^G_*(A,B)$ are finitely generated for $\ast=0,1$. Then, the ABC spectral sequence for $\KK^G(A,\blank)$ and $B$ converges toward $\KK^G(A,B)$ with the filtration $(\J_G^\F)^*(A,B)$.
\end{lem}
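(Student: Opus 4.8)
The plan is to reduce the statement to Lemma \ref{lem:comp} and then to run the same Artin--Rees argument already used in the proof of Lemma \ref{lem:proisom}. First I would compute the ABC spectral sequence using the explicit Milnor phantom tower of Lemma \ref{lem:proisom} rather than an arbitrary one: choosing $\F$-free compact $G$-spaces $X_i$ as in Lemma \ref{lem:fingen} (3), so that each $C(X_i)$ is $(\J_G^\F)^r$-injective and each $\KK^G_*(A,B\otimes C(X_i))$ is finitely generated, and putting $N_p:=B\otimes\bigotimes_{i=1}^p\cone_{X_i}$ and $I_p:=N_{p-1}\otimes C(X_p)$. Iterating the six-term exact sequences shows that $\KK^G_*(A,N_p)$ and $\KK^G_*(A,I_p)$ are finitely generated $R(G)$-modules for every $p$; this is the only place the hypothesis on $\KK^G_*(A,B)$ is used, through Lemma \ref{lem:fingen} (3). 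By Lemma \ref{lem:comp} it then suffices to prove that the map $i\colon\Bad_{p+1,p+q+1}(B)\to\Bad_{p,p+q+1}(B)$ induced by $\iota_p^{p+1}\colon N_{p+1}\to N_p$ is injective for all $p,q$.

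Next I would make the $\Bad$-groups algebraic. Since $\iota_p^{p+k}$ is $\J^k$-coversal, the subgroup $\J^r\KK^G_q(A,N_p)$ of $\KK^G_q(A,N_p)$ equals the ideal component $(\J_G^\F)^r(A,N_p)$, and because $\KK^G_*(A,N_p)$ is finitely generated, Theorem \ref{thm:IG2} identifies this filtration with the $(I_G^\F)$-adic one. Hence $\Bad_{p,q}(B)=\bigcap_r(I_G^\F)^r\KK^G_q(A,N_p)$. Writing $M:=\KK^G_q(A,N_{p+1})$ and using the long exact sequence of the triangle $N_{p+1}\to N_p\to I_{p+1}\to\Sigma N_{p+1}$, I identify $L:=\Ker(\iota_p^{p+1})_*=\Im\partial\subseteq M$, a finitely generated submodule which is a quotient of $\KK^G_{q+1}(A,I_{p+1})$, so that $\Ker i=L\cap\Bad_{p+1,q}(B)=L\cap\bigcap_r(I_G^\F)^rM$.

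The heart of the argument is to show that $L$ is $(I_G^\F)$-power-torsion, i.e.\ $(I_G^\F)^nL=0$ for some $n$. For this I would use that $L$ is a quotient of $\KK^G_{q+1}(A,I_{p+1})=\KK^G_{q+1}(A,N_p\otimes C(X_{p+1}))$ and that $N_p\otimes C(X_{p+1})$ is a $G$-$C(X_{p+1})$-algebra over the $\F$-free base $X_{p+1}$. By the family version of the Atiyah--Segal localization theorem (cf.\ \cite{MR935523}), the finitely generated $R(G)$-module $\KK^G_*(A,N_p\otimes C(X_{p+1}))$ is supported on $V(I_G^\F)$ and is therefore killed by a power of $I_G^\F$; the same then holds for its quotient $L$. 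This is exactly the step performed, in the guise of the $(\J_G^\F)^r$-injectivity of $\tilde B_p'$, in the proof of Lemma \ref{lem:proisom}, and I expect it to be the main obstacle, since it is the only point at which one leaves formal relative homological algebra and must feed in genuine equivariant topology.

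Finally I would invoke the Artin--Rees lemma over the Noetherian ring $R(G)$: there is $c$ with $(I_G^\F)^rM\cap L=(I_G^\F)^{r-c}\bigl((I_G^\F)^cM\cap L\bigr)$ for $r\ge c$, so that $\Ker i=L\cap\bigcap_r(I_G^\F)^rM\subseteq\bigcap_s(I_G^\F)^sL=0$. Thus $i$ is injective, and Lemma \ref{lem:comp} delivers both the convergence of the ABC spectral sequence to $\KK^G(A,B)$ with respect to the filtration $(\J_G^\F)^*(A,B)$ and the induced isomorphism on graded quotients. Beyond the localization step, the remaining care is bookkeeping: ensuring the finite-generation hypothesis propagates to every $N_p$ and $I_p$ of the chosen tower, so that Theorem \ref{thm:IG2} and Artin--Rees apply uniformly in $p$.
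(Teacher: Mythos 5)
Your proposal is correct in substance, and its skeleton is the same as the paper's: reduce via Lemma \ref{lem:comp} to injectivity of $i$ on the $\Bad$-groups, compute on the tower of Lemma \ref{lem:proisom} (built from Lemma \ref{lem:fingen} (3) so that finite generation propagates through six-term sequences), and close with Theorem \ref{thm:IG2} plus Artin--Rees over the Noetherian ring $R(G)$. The difference is tactical. The paper's proof is two lines long: it quotes the pro-vanishing of the boundary maps $\partial_p$ (equivalently, of the system $\{\Ker(\iota_0^p)_*\}=\{\Im \partial_p\}$) already established in Lemma \ref{lem:proisom} for the castle triangles $N_p\to B\to \tilde{B}_p'$, and injectivity of $i$ then follows formally: an element $x$ of $\Ker i$ lies in $\Im(\iota_{p+1}^{p+1+q})_*$ for every $q$, any preimage $y$ of $x$ lies in $\Ker(\iota_0^{p+1+q})_*=\Im\partial_{p+1+q}$, and pro-vanishing kills $x=(\iota_{p+1}^{p+1+q})_*y$ for $q$ large. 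You instead rerun the Artin--Rees mechanism stage by stage on the one-step kernels $L=\Ker(\iota_p^{p+1})_*$, with the algebraic identification $\Bad_{p,q}=\bigcap_r (I_G^\F)^r\KK_q^G(A,N_p)$ made explicit (a nice touch), and with the annihilation $(I_G^\F)^nL=0$ justified by the localization theorem of \cite{MR935523} in place of the paper's use of $(\J_G^\F)^r$-injectivity of $\tilde{B}_p'$. This buys self-containedness at the cost of duplicating the content of Lemma \ref{lem:proisom}.

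One step of yours needs repair, precisely at what you call the main obstacle. The spaces supplied by Lemma \ref{lem:fingen} (3) are homogeneous spaces $U/H$ of a larger group $U\supseteq G$ and are \emph{not} $\F$-free: the stabilizers $G\cap gHg^{-1}$, $g\in U$, need not lie in $\F$; only their cyclic subgroups are $G$-subconjugate into $\F$. Under the standing hypothesis (only $\KK^G_*(A,B)$ finitely generated) you cannot switch to genuinely $\F$-free spaces such as $G/H$, since $\KK^G_*(A,B\otimes C(G/H))\cong \KK^H_*(A,B)$ may fail to be finitely generated --- this is exactly why Lemma \ref{lem:fingen} (3) exists. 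Hence "supported on $V(I_G^\F)$" does not follow as stated. Two repairs are available: either run Segal's support theory through Corollary \ref{cor:cyc}, noting that every cyclic subgroup of every stabilizer of $X_{p+1}$ is subconjugate to the single $H\in\F$ used to build it, so that $\supp \KK^G_*(A,I_{p+1})\subseteq V(I_G^H)$ and a power of $I_G^H$ (hence a deep term of the filtration, since $H$ is contained in infinitely many members of the defining sequence) annihilates; or, simpler and parallel to the paper, use the $(\J_G^\F)^r$-injectivity which Lemma \ref{lem:fingen} (3) does give: $I_{p+1}=N_p\otimes C(X_{p+1})$ inherits it, and multiplication by $\gamma_1\cdots\gamma_r\in(I_G^\F)^r$ on $\KK^G(A,I_{p+1})$ is composition with $\gamma_1\cdots\gamma_r\otimes\id\in(\J_G^\F)^r(I_{p+1},I_{p+1})$, hence zero; so $L$ is killed without any support theory. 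Finally, note that Artin--Rees is a statement about powers of a single ideal, not about the multi-ideal filtration $(I_G^\F)^*$; apply it to the single finite product $J:=(I_G^\F)^n$ annihilating $L$, and then place a sufficiently deep filtration term inside $J^{c+1}$ using the recurrence of subgroups in the defining sequence (the paper's own proof of Lemma \ref{lem:proisom} implicitly requires the same care). With these adjustments your argument closes and proves the statement.
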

\begin{proof}
According to  Lemma \ref{lem:comp}, it suffices to show that $i :\Bad _{p+1,p+q+1} \to \Bad _{p,p+q+1}$ is injective. As is proved in Lemma \ref{lem:proisom}, the boundary map $\partial _p$ is pro-zero homomorphism and hence the projective system $\{ \Ker \iota _0^p \} = \{ \Im \partial_p \}$ is pro-zero. Therefore, for any $p>0$ there is a large $q>0$ such that 
$$\Ker \iota _0^1 \cap (\J _G^\F)^\infty (A,N_p) \subset \Ker \iota_0^p \cap (\J _G^\F)^q (A,N_p) = \Ker \iota _0^p \cap \Im \iota _p^{p+q} =0.$$
\end{proof}

\begin{thm}\label{thm:AS}
Let $A$ and $B$ be separable $\sigma$-$G$-$\Cst$-algebras such that $\KK^G_*(A,B)$ are finitely generated $R(G)$-modules ($\ast=0,1$). Then, the morphisms
\begin{itemize}
\item $\KK^G(A,B) \to \KK^G(A,\tilde{B})$,
\item $\KK^G(A,B) \to \RKK^G(E_\mathcal{F}G;A,B)$,
\item $\KK^G(A,B) \to  \sigma \Kas ^G/\FC (A,B)$
\end{itemize}
induce the isomorphism of graded quotients with respect to the filtration $(\J_G^\F)^*(A,B)$. In particular, we obtain isomorphisms
$$\KK ^G(A,B) _{I_G^\F}^{\myhat} \cong \KK^G(A,\tilde{B}) \cong \RKK^G(E_\F G;A,B) \cong \sigma \Kas ^G/\FC (A,B).$$
\end{thm}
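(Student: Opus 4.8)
The plan is to reduce all three assertions to the single comparison map $\alpha_* \colon \KK^G(A,B) \to \KK^G(A,\tilde{B})$ induced by the right approximation $B \to \tilde{B}$ of the complementary pair $(\FC, \ebk{\FI}^{\loc})$ from Theorem \ref{thm:decomp}, and then to feed in the spectral-sequence comparison of Lemma \ref{lem:comp} together with the pro-isomorphism of Lemma \ref{lem:proisom}. First I would handle $\alpha_*$ directly. By Lemma \ref{lem:ABC} the ABC spectral sequence for $\KK^G(A,\blank)$ and $B$ converges toward $\KK^G(A,B)$ with the filtration $(\J_G^\F)^*(A,B)$, and its proof establishes exactly the hypothesis of Lemma \ref{lem:comp}, namely that $i \colon \Bad_{p+1,p+q+1} \to \Bad_{p,p+q+1}$ is injective. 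Hence Lemma \ref{lem:comp} immediately gives that $\alpha_*$ induces an isomorphism on the graded quotients for the filtration $(\J_G^\F)^*$.

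Next I would identify the three targets and check that each comparison map becomes $\alpha_*$ under the identification. For the localization $\sigma \Kas^G/\FC$, the complementary pair of Theorem \ref{thm:decomp} yields a category equivalence $\sigma \Kas^G/\FC \simeq \ebk{\FI}^{\loc}$ whose quasi-inverse is the right approximation functor; since $\tilde{B} \in \ebk{\FI}^{\loc}$ and $\KK^G(N,J)=0$ for $N \in \FC$ and $J \in \ebk{\FI}^{\loc}$, applying $\KK^G(A,\blank)$ to the defining triangle $N_B \to B \to \tilde{B} \to \Sigma N_B$ gives $\sigma \Kas^G/\FC(A,B) \cong \KK^G(A,\tilde{B})$, with the localization map carried to $\alpha_*$. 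For $\RKK^G(E_\F G; A, B)$ I would use the Milnor model $E_\F G = \bigast_{i} X_i$ supplied by Lemma \ref{lem:fingen}~(3), choosing the $X_i$ so that $\KK^G_*(A, B \otimes C(X_i))$ remain finitely generated and each $H \in \F$ is hit by infinitely many $X_i$; then, as in Lemma \ref{lem:proisom}, one has $\tilde{B} = \hoprojlim_p \tilde{B}'_p$ with $\tilde{B}'_p = B \otimes C(\bigast_{i=1}^p X_i)$, and $\RKK^G(E_\F G; A, B)$ is computed by a cofinality argument as $\varprojlim_p \KK^G(A, \tilde{B}'_p)$, the structure map $\KK^G(A,B) \to \RKK^G(E_\F G;A,B)$ being the one induced by the units $B \to \tilde{B}'_p$, i.e.\ $\alpha_*$ in the limit.

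The key remaining input is the vanishing of the derived limit. Applying $\KK^G(A,\blank)$ to the defining triangle of $\hoprojlim$ produces the Milnor sequence relating $\KK^G_*(A,\tilde{B})$ to $\varprojlim_p$ and ${\varprojlim_p}^1$ of $\KK^G_*(A,\tilde{B}'_p)$. By Lemma \ref{lem:proisom} the tower $\{\KK^G(A,\tilde{B}'_p)\}$ is pro-isomorphic to $\{\KK^G(A,B)/(\J_G^\F)^p(A,B)\}$, which has surjective structure maps and hence satisfies the Mittag-Leffler condition; as pro-isomorphic towers share their derived limits, ${\varprojlim}^1 \KK^G_{*+1}(A,\tilde{B}'_p)=0$. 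Therefore $\KK^G(A,\tilde{B}) \cong \varprojlim_p \KK^G(A,\tilde{B}'_p)$, and all three targets are canonically identified with $\KK^G(A,\tilde{B})$ in a way compatible with the comparison maps. The graded-quotient statement for each of the three maps then follows from the one already proved for $\alpha_*$.

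Finally I would assemble the completion chain. By Theorem \ref{thm:IG2} the $(\J_G^\F)^*$-adic and $I_G^\F$-adic filtrations on $\KK^G(A,B)$ are equivalent, so $\KK^G(A,B)^{\myhat}_{I_G^\F} = \varprojlim_p \KK^G(A,B)/(\J_G^\F)^p(A,B)$; since a filtered map inducing an isomorphism on associated graded induces an isomorphism on completions, and each of the three targets is already complete (being the relevant inverse limit), the completion of $\KK^G(A,B)$ coincides with each of them, which is the displayed chain. The step I expect to be the main obstacle is the $\RKK$ identification: one must verify carefully that $\RKK^G(E_\F G; A, B)$ is genuinely computed as $\varprojlim_p \KK^G(A,\tilde{B}'_p)$ over the join filtration of the Milnor model and that the canonical map is $\alpha_*$, and here the finite-generation hypothesis is indispensable precisely because it is what forces the Mittag-Leffler condition and the vanishing of ${\varprojlim}^1$; without it the three right-hand objects need not agree.
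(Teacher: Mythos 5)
Your proposal is correct and follows essentially the same route as the paper: the graded-quotient isomorphism for $\alpha_*\colon \KK^G(A,B)\to\KK^G(A,\tilde{B})$ comes from Lemma \ref{lem:ABC} via Lemma \ref{lem:comp}, the completion identification comes from the pro-isomorphism of Lemma \ref{lem:proisom} together with Theorem \ref{thm:IG2} and the resulting Mittag-Leffler vanishing of $\varprojlim^1$, and the remaining two targets are identified with $\KK^G(A,\tilde{B})$ by the complementary-pair formalism of Theorem \ref{thm:decomp} and the Milnor model of $E_\F G$. The paper's own proof is just a terse citation of these same lemmas; your write-up merely makes explicit the identifications it leaves implicit.
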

\begin{proof}
This is a direct consequence of Lemma \ref{lem:proisom} and Lemma \ref{lem:ABC}. Note that Lemma \ref{lem:proisom} implies that the projective system $\{ \KK ^G(A,\tilde{B}_p) \}$ satisfies the Mittag-Leffler condition and hence the $\varprojlim ^1$-term vanishes. 
\end{proof}

\begin{cor}
Let $A$ be a separable $\sigma$-$\Cst$-algebra and let $\beta _t$ be a homotopy of continuous actions of a compact Lie group $G$ on a $\sigma$-$\Cst$-algebra $B$. We write $B_t$ for $\sigma $-$G$-$\Cst$-algebras $(B,\beta _t)$. If $\KK^G_*(A,B_0)$ and $\KK^G_*(A,B_1)$ are finitely generated for $\ast=0,1$, there is an isomorphism
$$\KK ^G(A,B_0) _{I_G^\F}^{\myhat} \to \KK^G(A,B_1)_{I_G^\F}^{\myhat}.$$
\end{cor}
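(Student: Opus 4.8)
The plan is to deduce the statement from Theorem \ref{thm:AS} together with Corollary \ref{cor:hom}. First I would use Theorem \ref{thm:AS} at each endpoint to replace the completions by morphism groups in the localized picture: for $t=0,1$ there are isomorphisms $\KK^G(A,B_t)^{\myhat}_{I_G^\F}\cong \KK^G(A,\tilde{B}_t)$, where $\tilde{B}_t$ is the right approximation of $B_t$ with respect to the complementary pair $(\FC,\ebk{\FI}^{\loc})$. By the Milnor phantom castle (Definition \ref{def:Milnor}) one has $\tilde{B}_t\cong B_t\otimes \mathcal{P}$, where $\mathcal{P}:=\hoprojlim_p C(\bigast_{i=1}^p X_i)$ is the right approximation of $\comp$ and carries a fixed $G$-action not depending on $t$ (the tensor factor $B$ commutes past the homotopy projective limit). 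Thus it suffices to construct an isomorphism $\KK^G(A,B_0\otimes \mathcal{P})\cong \KK^G(A,B_1\otimes \mathcal{P})$.

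Next, the homotopy $\beta_t$ induces a homotopy $\tilde{\beta}\otimes\id_{\mathcal{P}}$ of $G$-actions on $B\otimes\mathcal{P}$ whose endpoints are $B_0\otimes\mathcal{P}$ and $B_1\otimes\mathcal{P}$. By the projection formula $\Ind_H^G D\otimes C\cong \Ind_H^G(D\otimes \Res_G^H C)$ the subcategory $\ebk{\FI}^{\loc}$ is a tensor ideal, so both endpoints again lie in $\ebk{\FI}^{\loc}$, and in particular are $\FC$-local. Applying Corollary \ref{cor:hom} to $B\otimes\mathcal{P}$ shows that the evaluation maps out of the path algebra are isomorphisms in $\sigma\Kas^G/\mathcal{TC}$, so $B_0\otimes\mathcal{P}$ and $B_1\otimes\mathcal{P}$ are equivalent there.

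What remains, and what I expect to be the main obstacle, is to promote this $\sigma\Kas^G/\mathcal{TC}$-equivalence to an honest isomorphism on $\KK^G(A,-)$. For the trivial family $\mathcal{T}$ this is automatic from the second assertion of Corollary \ref{cor:hom}: since $\mathcal{P}\in\ebk{\mathcal{TI}}^{\loc}$ in that case, both endpoints lie in $\ebk{\mathcal{TI}}^{\loc}$, on which the functor $\sigma\Kas^G\to\sigma\Kas^G/\mathcal{TC}$ is faithful, so a $\mathcal{TC}$-equivalence is already a $\KK^G$-equivalence. For a general family the difficulty is precisely that $\ebk{\FI}^{\loc}$ is only $\FC$-local and not $\mathcal{TC}$-local, so this faithfulness breaks down. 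I would resolve it by showing that the mapping cone $J_t\otimes\mathcal{P}$ of each evaluation—which is non-equivariantly contractible and lies in $\ebk{\FI}^{\loc}$—is $\KK^G$-negligible, i.e.\ that $\RKK^G(E_\F G;A,J_t)=0$. Concretely, writing $J_t\cong (C_0((0,1])\otimes B,\tilde{\beta})$ as a field over the half-open interval on which $G$ acts trivially, I would reduce via the projection formula and the join description of $\mathcal{P}$ to the vanishing of compactly supported representable $\KK$ in the contractible interval direction. This is the operator-algebraic counterpart of the homotopy invariance of the Borel construction under homotopies of the action, and it is the heart of the argument; everything else is formal bookkeeping with the complementary pair.
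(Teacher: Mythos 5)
Your first three paragraphs do follow what is evidently the intended derivation: Theorem \ref{thm:AS} at both endpoints, the Milnor description of the right approximation, the projection formula, and Corollary \ref{cor:hom} applied to $B\otimes\mathcal{P}$; and you are right that for $\F=\mathcal{T}$ the conclusion is then immediate, since $\mathcal{TC}\cap\ebk{\mathcal{TI}}^{\loc}=0$ makes the localization functor faithful on $\ebk{\mathcal{TI}}^{\loc}$. The genuine gap is in the step you yourself call the heart of the argument, and it is not a technical point but a circularity. The object $J_t\otimes\mathcal{P}$ (more precisely $\hoprojlim_p\bigl(J_t\otimes C(\bigast_{i\le p}G/H_i)\bigr)$, which is what your construction produces) is exactly the right approximation of $J_t$ with respect to the complementary pair $(\FC,\ebk{\FI}^{\loc})$ of Theorem \ref{thm:decomp}; hence it vanishes in $\sigma\Kas^G$ if and only if $J_t\in\FC$, i.e.\ if and only if $\Res_G^H\ev_t$ is a $\KK^H$-equivalence for \emph{every} $H\in\F$. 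That is precisely the homotopy-invariance statement you are trying to prove, now posed at each subgroup of the family (and literally the original statement if $G\in\F$). Tensoring with $\mathcal{P}$ buys nothing at these subgroups: for $H\in\F$ every inclusion of consecutive joins cones off $H$-equivariantly through an $H$-fixed point, so the Milnor system restricted to $H$ is pro-$H$-homotopy equivalent to the constant system $\comp$, and therefore $\Res_G^H(J_t\otimes\mathcal{P})\cong\Res_G^H J_t$ in $\KK^H$; the join description helps only at the trivial subgroup. Your proposed mechanism --- vanishing of compactly supported representable $\KK$ ``in the interval direction'' --- fails for the same reason: the contraction of $(0,1]$ is not $\tilde{\beta}$-equivariant, because the $G$-action on the fibre varies along the interval; $J_t$ is a field of actions, not a pullback field, and that is the entire difficulty.

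Two symptoms confirm this. First, your vanishing claim nowhere uses the finite-generation hypothesis, yet any correct proof must, since without it even Theorem \ref{thm:AS} is unavailable. Second, if the claim held for an arbitrary family it would hold for $\F=\Z$, where Corollary \ref{cor:cyc} and Theorem \ref{thm:IG2} make the filtration $(I_G^{\Z})^*$ on a finitely generated module eventually zero, so the completion is the identity; your claim would then give $\KK^G(A,B_0)\cong\KK^G(A,B_1)$ outright, i.e.\ full homotopy invariance of equivariant $\KK$ under homotopies of actions --- exactly the statement Corollary \ref{cor:hom} is engineered to avoid (it yields only an equivalence in $\sigma\Kas^G/\mathcal{TC}$, upgradeable to a $\KK^G$-equivalence only for objects of $\ebk{\mathcal{TI}}^{\loc}$). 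In fairness to you, the paper supplies no proof of this corollary, and the derivation it implicitly suggests (Theorem \ref{thm:AS} combined with Corollary \ref{cor:hom}) is complete only for the trivial family; you have located the true obstruction correctly, but the route you propose around it does not close the gap.
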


\section{Restriction map in the Kasparov category}\label{section:4}
The main subjects in this section are the families $\FIN$ of all finite subgroups and $\FZ$ of all finite cyclic subgroups of $G$. We revisit McClure's restriction map theorem (Theorem A and Corollary C of \cite{MR862427}) and its generalization for $\KK$-theory by Uuye (Theorem 0.1 of \cite{MR2887199}) from categorical viewpoint. 

First, we introduce the Kasparov category with coefficient. Let $M_n$ be the mapping cone of the $n$-fold covering map $S^1 \to S^1$ and $M:=\Tel M_n$ where $M_n \to M_{nm}$ is induced from the $m$-fold covering of $S^1$. Set $Q_n:=C_0(M_n)$, $Q:=\hoprojlim Q_n=\{ f \in C(M) \mid f(\ast)=0 \}$ and 
\ma{
\KK ^G(A,B;\zahl /n\zahl )&:=\KK^G(A,B \otimes Q_n),\\
\KK ^G(A,B;\hat{\zahl})&:=\KK^G(A,B \otimes Q)}
(when we consider the real $\KK$-groups, we replace $M_n$ with $S^6M_n$). The Kasparov category $\sigma \Kas ^G_{\hat{\zahl}}$ whose morphism set is $\KK$-groups with coefficient in $\hat{\zahl}$ has a canonical structure of the triangulated category.  
Actually, it follows in the same way as Theorem \ref{thm:homideal} that the exact triangle 
\[A \otimes SQ \to A \otimes R \to A \to A \otimes Q, \]
where $R$ is the mapping cone of $\comp \to Q$, gives the decomposition of $A$ with respect to the complementary pair $(\Ncat_\J, \ebk{\Icat _\J}^\loc)$ for the family of homological ideals $\J_n:=\Ker (\blank \otimes Q_n)$.

\begin{lem}\label{lem:coeff}
Let $A$ and $B$ be $\sigma$-$G$-$\Cst$-algebras. We have the exact sequence
$$0 \to \frac{\KK^G_*(A,B)}{n\KK^G_*(A,B)} \to \KK^G_*(A,B;\zahl /n \zahl) \to \Tor _1^\zahl (\zahl /n \zahl , \KK^G_{*+1}(A,B)) \to 0.$$
Moreover, if $\KK^G_*(A,B)$ are finitely generated for $\ast =0,1$, we have
$$\KK^G_*(A,B;\hat{\zahl}) \cong  \varprojlim \KK^G(A,B;\zahl/n\zahl) \cong \KK^G(A,B)^{\myhat}.$$
\end{lem}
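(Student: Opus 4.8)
The plan is to derive both assertions from the exact triangle that defines $Q_n$. Since $M_n$ is the mapping cone of the $n$-fold covering $S^1 \to S^1$, it is a Moore space with reduced $\K$-theory $\tilde{\K}^0 = \zahl/n\zahl$ and $\tilde{\K}^1 = 0$; more precisely, the Puppe sequence of this cofibration, together with the suspension identification $C_0(S^1) \cong S\comp$, yields an exact triangle in $\sigma\Kas^G$ of the form
\[
\comp \xra{\cdot n} \comp \to Q_n \to S\comp,
\]
where the first map is multiplication by $n$ (the degree of the covering acts as $\cdot n$ on $\KK(\comp,\blank)$). Tensoring with $B$ (so $\comp$ becomes $B$ and $S\comp$ becomes $SB$) and applying the homological functor $\KK^G_*(A,\blank)$ produces a six-term exact sequence whose connecting maps $\KK^G_*(A,B) \to \KK^G_*(A,B)$ are again multiplication by $n$.

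First I would read off the desired short exact sequence from this six-term sequence. The cokernel of $\cdot n$ on $\KK^G_*(A,B)$ is $\KK^G_*(A,B)/n\KK^G_*(A,B)$ and maps injectively into $\KK^G_*(A,B;\zahl/n\zahl) = \KK^G_*(A,B\otimes Q_n)$, while the image of the next map in $\KK^G_{*-1}(A,B)$ is the kernel of $\cdot n$, i.e.\ the $n$-torsion $\{x : nx=0\}$. Identifying this $n$-torsion with $\Tor_1^\zahl(\zahl/n\zahl, \KK^G_{*-1}(A,B))$ via the resolution $0 \to \zahl \xra{n} \zahl \to \zahl/n\zahl \to 0$, and noting $\KK^G_{*-1} = \KK^G_{*+1}$ by Bott periodicity, gives exactly the stated sequence. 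This part is routine.

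For the second assertion I would exploit $Q = \hoprojlim Q_n$. Passing to a cofinal tower (e.g.\ $n = k!$, so that $\varprojlim \zahl/n\zahl = \hat\zahl$) and using that $\KK^G_*(A, B\otimes\blank)$ commutes with the countable products occurring in the homotopy projective limit (either from the concrete telescope model $B\otimes Q = \hoprojlim(B\otimes Q_n)$, or because $\KK^G_*(A,\blank)$ is compatible with countable products in the $\sigma$-framework), the defining triangle of $\hoprojlim$ yields a Milnor exact sequence
\[
0 \to \varprojlim{}^1 \KK^G_{*+1}(A,B;\zahl/n\zahl) \to \KK^G_*(A,B;\hat\zahl) \to \varprojlim \KK^G_*(A,B;\zahl/n\zahl) \to 0.
\]
I would then analyse the tower $\{\KK^G_*(A,B;\zahl/n\zahl)\}_n$ through the short exact sequence of the first part, writing $M_* := \KK^G_*(A,B)$. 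The subsystem $\{M_*/nM_*\}$ has surjective transition maps, hence is Mittag--Leffler with limit the profinite completion $M_*^{\myhat}$. Finite generation enters in the quotient subsystem $\{\Tor_1^\zahl(\zahl/n\zahl, M_{*+1})\}$: its transition maps are the multiplication-by-$m$ maps on the torsion subgroups, and since finite generation forces the torsion subgroup to be finite of some exponent $e$, every transition by a multiple of $e$ vanishes, so this subsystem is pro-zero and both its $\varprojlim$ and $\varprojlim{}^1$ vanish. Consequently the torsion contributions disappear in the limit, giving $\varprojlim \KK^G_*(A,B;\zahl/n\zahl) \cong M_*^{\myhat} = \KK^G(A,B)^{\myhat}$; moreover the whole tower is an extension of two Mittag--Leffler towers and is therefore Mittag--Leffler, so the $\varprojlim{}^1$ term in the Milnor sequence vanishes, yielding $\KK^G_*(A,B;\hat\zahl) \cong \varprojlim \KK^G_*(A,B;\zahl/n\zahl)$.

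The main obstacle I anticipate is this second step: controlling $\varprojlim$ and $\varprojlim{}^1$ of the tower. One must genuinely verify that $\KK^G_*(A, B\otimes\blank)$ commutes with the countable products built into the homotopy projective limit — this is precisely where the $\sigma$-/pro-algebra framework of the Appendix is indispensable, since separable algebras are not closed under products — and one must establish that the torsion subsystem is pro-zero, for which the finite-generation hypothesis is essential; without it the torsion need not stabilize and the identification with the profinite completion breaks down.
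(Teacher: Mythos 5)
Your proposal follows the same route as the paper's own proof. Part one is exactly the paper's argument (up to a rotation of the triangle): the six-term sequence induced by the exact triangle $S^2 \to Q_n \to S \xra{n\cdot} S$, with $\Tor_1^\zahl(\zahl/n\zahl,\blank)$ identified with the kernel of multiplication by $n$. Part two is the paper's ``projective limit of the first sequence,'' which you carry out in more detail via the ${\varprojlim}^1$-sequence; the compatibility with countable products that you flag as the main obstacle is precisely what Theorem \ref{thm:limit} (2) of the appendix supplies, so there is no extra work to do there.

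However, one step of your part two is wrong as stated: ``finite generation forces the torsion subgroup to be finite of some exponent $e$.'' The hypothesis is finite generation as $R(G)$-modules, not as abelian groups (this is how the lemma is applied in Theorem \ref{thm:rest}, and examples such as the $2$-torsion in $R(\mathbb{T})\oplus R(\mathbb{T})/2R(\mathbb{T})$ show the torsion subgroup can be infinite). What survives is the part you actually use: the torsion subgroup is an $R(G)$-submodule, hence finitely generated over the Noetherian ring $R(G)$; since each of its finitely many generators is killed by some positive integer, the whole torsion submodule is killed by their least common multiple $e$. With bounded exponent restored, your pro-zero argument for the $\Tor$-tower, and hence the vanishing of both $\varprojlim$ and ${\varprojlim}^1$, goes through unchanged. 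For comparison, the paper instead invokes the fact that a finitely generated $R(G)$-module contains no divisible subgroup, which kills $\varprojlim$ of the $\Tor$-tower; your bounded-exponent argument is marginally stronger in that it disposes of the ${\varprojlim}^1$-term of that tower at the same time, rather than relying on the Mittag-Leffler property of the quotient tower alone.
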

\begin{proof}
It is proved in the same way as Theorem 2.7 of \cite{MR880503}. Consider the six-term exact sequence
\[
\xymatrix{
\KK^G(A,B) \ar[r]^{n\cdot } &\KK^G(A,B) \ar[r] & \KK^G(A,B;\zahl/n\zahl) \ar[d] \\
\KK^G_1(A,B;\zahl /n\zahl) \ar[u] & \KK^G_1(A,B) \ar[l] & \KK^G_1(A,B) \ar[l]^{n\cdot}
}
\]
induced from the exact triangle $S^2 \to Q_n \to S \xra{n \cdot } S$. Then we get the first exact sequence since $\Tor _1^\zahl (\zahl /n \zahl , \KK^G_{*+1}(A,B))$ is equal to the kernel of multiplication by $n$. The second exact sequence is obtained as the projective limit of the first one. Note that any finitely generated $R(G)$-module does not contain a divisible subgroup.
\end{proof}

Let $G$ be a compact Lie group. Let $T$ be a closed subgroup of $G$ as in Proposition 4.1 of \cite{MR0248277}, that is, it is isomorphic to a finite extension of a torus and $\comp$ is a direct summand of $C(G/T)$ in $\sigma \Kas ^G$. According to Corollary 1.2 of \cite{MR880503}, there is an increasing sequence $\{ F_n \}$ of finite subgroups of $T$ such that $\pi (F_i)=T/T^0$ and any cyclic subgroup of $T^0$ is contained in $T^0 \cap F_i$ for sufficiently large $i>0$. 

\begin{lem}\label{lem:tori}
Set $\Phi:=\Tel T/F_i$. Then, $C(\Phi) \otimes Q_n \sim _{\KK^G} C(E_\FIN T) \otimes Q_n$ and $C(\Phi) \otimes Q \sim _{\KK^G} C(E_\FIN T) \otimes Q$. In particular, $C(\Phi)$ is equivalent to $C(E_\FIN T)$ in the category $\sigma \Kas ^T _{\hat{\zahl}}$. 
\end{lem}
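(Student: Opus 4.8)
The plan is to compare the two algebras through a single morphism and then show that this morphism becomes invertible once we tensor with $Q_n$ (respectively $Q$), detecting invertibility on cyclic subgroups. Since both $\Phi=\Tel T/F_i$ and $E_\FIN T$ are $\FIN$-free $T$-CW-complexes, the universal property of $E_\FIN T$ provides a $T$-map $\Phi\to E_\FIN T$, unique up to $T$-homotopy, and hence a morphism $c\colon C(E_\FIN T)\to C(\Phi)$ in $\sigma\Kas^T$. (As the relevant objects are $T$-algebras I work throughout in $\sigma\Kas^T$, reading the $\KK^G$ of the statement accordingly.) Because tensoring with $Q_n$ is an exact functor, the mapping cone of $c\otimes Q_n$ is $C_c\otimes Q_n$, where $C_c$ denotes the mapping cone of $c$; since a morphism in a triangulated category is invertible exactly when its cone is a zero object, it suffices to prove that $C_c\otimes Q_n$ is zero in $\sigma\Kas^T$, and likewise for $Q$.

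First I would reduce this vanishing to a computation on cyclic subgroups. By Corollary \ref{cor:cyc} (1) the subcategory $\Z\mathcal{C}$ is zero in $\sigma\Kas^T$, so it is enough to show that $\Res_T^C(C_c\otimes Q_n)=\mathrm{cone}(\Res_T^C c)\otimes Q_n$ is $\KK^C$-contractible for every topologically cyclic subgroup $C\le T$; equivalently, that $\Res_T^C c\otimes Q_n$ is a $\KK^C$-equivalence. This is exactly where the coefficient algebra $Q_n$ is indispensable: $\Res_T^C E_\FIN T$ is a model for the proper classifying space $E_\FIN C$ of $C\cong\mathbb T^m\times\zahl/l$, which is not $C$-equivariantly contractible, so the comparison is genuinely a mod-$n$ statement rather than a $C$-homotopy equivalence of spaces.

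Next I would carry out the cyclic computation. Restricting the telescope gives $\Res_T^C\Phi=\Tel_i\Res_T^C(T/F_i)$, and since $C_0$ turns a mapping telescope into a homotopy inverse limit, $\KK^C_*(D,C(\Phi)\otimes Q_n)$ is computed, for arbitrary $D$, by the $\varprojlim$--$\varprojlim^1$ sequence of the inverse system $\{\KK^C_*(D,C_0(\Res_T^C(T/F_i))\otimes Q_n)\}_i$. Each $\Res_T^C(T/F_i)$ is a finite disjoint union of $C$-orbits, so these groups are explicit in terms of the representation rings $R(C\cap gF_ig^{-1})$. The defining properties of the sequence $\{F_i\}$ (Corollary 1.2 of \cite{MR880503}) --- namely $\pi(F_i)=T/T^0$ together with the eventual containment of every finite cyclic subgroup of $T^0$ --- would then be used to show that, after $\otimes Q_n$, this inverse system is pro-isomorphic to the one computing $\KK^C_*(D,C(E_\FIN C)\otimes Q_n)$, and that the map induced by $c$ realizes this pro-isomorphism. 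Lemma \ref{lem:coeff} identifies the $\zahl/n$-coefficient groups with the relevant completions, which closes the cyclic case and, reassembled via Corollary \ref{cor:cyc}, yields $C(\Phi)\otimes Q_n\sim_{\KK^T}C(E_\FIN T)\otimes Q_n$.

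Finally, the $Q=\hoprojlim Q_n$ statement would follow by passing to the homotopy inverse limit of the equivalences $c\otimes Q_n$; here I must control the $\varprojlim^1$-terms, for which the Mittag--Leffler property coming from the finite generation input of Lemma \ref{lem:coeff} at the finite stages guarantees that a homotopy limit of equivalences is again an equivalence, giving the asserted equivalence in $\sigma\Kas^T_{\hat{\zahl}}$. The main obstacle is the cyclic computation of the third paragraph: translating the combinatorial properties of $\{F_i\}$ into the assertion that the restricted telescope is, mod $n$, the completion modeled by $E_\FIN C$. This is precisely the $\KK$-theoretic incarnation of the McClure/Atiyah--Segal detection principle by cyclic subgroups, and the delicate point is the careful bookkeeping of the inverse systems of representation rings and their completions after introducing $\zahl/n$-coefficients.
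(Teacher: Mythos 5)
Your opening moves are legitimate: constructing the comparison morphism $c\colon C(E_\FIN T)\to C(\Phi)$ from the universal property of $E_\FIN T$, passing to its mapping cone, and invoking Corollary \ref{cor:cyc} (1) to reduce the vanishing of that cone (after $\otimes\, Q_n$) to $\KK^C$-contractibility of its restrictions to all topologically cyclic subgroups $C\le T$. The gap is that the entire content of the lemma is then concentrated in your third paragraph, which you do not prove but only describe (``would then be used to show\dots''), and the route you describe breaks down. You claim that $\Res_T^C(T/F_i)$ is a \emph{finite} disjoint union of $C$-orbits, so that the relevant inverse systems are explicit in terms of the representation rings $R(C\cap gF_ig^{-1})$. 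This is false unless $\dim C=\dim T$: the $C$-orbits in the manifold $T/F_i$ have dimension $\dim C$, so for, say, $T=\mathbb{T}^2$ and $C$ a coordinate circle, the orbit space $C\backslash T/F_i$ is itself a circle and there are uncountably many orbits. Since Corollary \ref{cor:cyc} forces you to treat \emph{every} topologically cyclic $C$, not only those of full dimension, the proposed computation cannot be carried out as stated; repairing it (e.g.\ by induction over a $C$-CW structure on $T/F_i$ and careful pro-group bookkeeping) would amount to reproving McClure's theorem, which is a substantial piece of work and is exactly what the paper cites instead.

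For comparison, the paper's proof is two lines because it quotes the space-level result: Corollaries 1.4 and 1.5 of \cite{MR862427} give a $T$-homotopy equivalence between $\Phi\times E_\FIN T\times M_n$ and $E_\FIN T\times M_n$ (resp.\ with $M$ in place of $M_n$), which yields $C(\Phi)\otimes C(E_\FIN T)\otimes A\sim_{\KK^T}C(E_\FIN T)\otimes A$ for $A=Q_n$ or $Q$; the extra factor $C(E_\FIN T)$ is then removed on the other side using that $C(\Phi)$ lies in $\ebk{\FIN\mathcal{I}}^{\loc}$ --- tensoring an object of $\ebk{\FIN\mathcal{I}}^{\loc}$ with the $\FIN$-contractible cone of $\comp\to C(E_\FIN T)$ gives zero, whence $C(\Phi)\otimes A\sim_{\KK^T}C(\Phi)\otimes C(E_\FIN T)\otimes A$. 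Your proposal uses neither ingredient: it never exploits $C(\Phi)\in\ebk{\FIN\mathcal{I}}^{\loc}$, and it attempts to rebuild McClure's equivalence from scratch, stalling precisely at that point. The $Q$-statement in your last paragraph inherits the same gap, since it rests on the unproven mod-$n$ case.
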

In other words, $SC(\Phi) \to \cone _\Phi \to \comp \to C(\Phi)$ is the approximation triangle of $\comp$ with respect to the complementary pair $(\mathcal{EC}, \ebk{\mathcal{EI}}^\loc)$ in $\sigma \Kas ^T_{\hat{\zahl}}$. 
\begin{proof}
By Corollary 1.4 and Corollary 1.5 of \cite{MR862427}, $\Phi \times E_\F T \times M_n$  (resp. $\Phi \times E_\FIN T \times M$) is $G$-homotopic to $E_\FIN T \times M_n$ (resp. $E_\FIN T \times M$). Since $C(\Phi)$ is in $\ebk{\FIN \mathcal{I}}^\loc$, we obtain $\KK^G$-equivalences
$$C(\Phi) \otimes A \sim _{\KK^G} C(\Phi) \otimes C(E_\FIN T) \otimes A \sim _{\KK^G} C(E_\FIN T) \otimes A$$
for $A=Q_n$ or $Q$.
\end{proof}

\begin{prp}\label{prp:Z/nZ}
Let $A,B$ be separable $\sigma$-$G$-$\Cst$-algebras. Then we have an equivalence of filtrations 
$$(\J _G^\FIN)^*(A,B;\zahl /n\zahl ) \sim \{ \J _G^{F_n}(A,B;\zahl /n\zahl ) \} _{n \in \zahl _{>0}}$$
where $\J(A,B ; \zahl /n\zahl):=\J(A,B \otimes Q_n)$ for any homological ideals $\J$.
\end{prp}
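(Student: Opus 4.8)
The plan is to establish the equivalence as two mutually cofinal inclusions, carried out inside a single coefficient group $\KK^G(A,B\otimes Q_n)$ at a time: namely (i) for every $n$ there is $r$ with $(\J_G^\FIN)^r(A,B;\zahl/n\zahl)\subseteq\J_G^{F_n}(A,B;\zahl/n\zahl)$, and (ii) for every $r$ there is $n$ with $\J_G^{F_n}(A,B;\zahl/n\zahl)\subseteq(\J_G^\FIN)^r(A,B;\zahl/n\zahl)$. Direction (i) is immediate and requires no topology: each $F_n$ is a finite subgroup of $G$, so $\J_G^{F_n}$ is one of the ideals constituting the family $\J_G^\FIN$, and fixing a defining sequence in which the index of $F_n$ occurs, any composition $(\J_G^\FIN)^r$ among whose factors is $\J_G^{F_n}$ is contained in $\J_G^{F_n}$, since a longer composition of ideals lies in every subcomposition. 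This survives tensoring the second variable by $Q_n$. The entire content of the proposition is therefore (ii), the McClure-type statement that restriction to the single finite group $F_n$, with the matched coefficient $\zahl/n\zahl$, is already as strong as the $r$-fold $\FIN$-product once $n$ is large relative to $r$.

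For (ii) I would reduce from $G$ to the subgroup $T$ of Proposition 4.1 of \cite{MR0248277}. By Corollary \ref{cor:cyc} the filtration $(\J_G^\FIN)^*$ is equivalent to $(\J_G^\FZ)^*$; since $T$ is a finite extension of a maximal torus meeting every conjugacy class, every finite cyclic subgroup of $G$ is subconjugate to $T$, so this filtration is controlled by ideals $\J_G^H$ with $H$ a finite subgroup of $T$. As $\comp$ is a direct summand of $C(G/T)\cong\Ind_T^G\comp$ in $\sigma\Kas^G$, Frobenius reciprocity (Proposition \ref{prp:indres}) realises $\KK^G(A,B;\zahl/n\zahl)$ as a natural retract of $\KK^T(\Res_G^TA,\Res_G^TB;\zahl/n\zahl)$; because $\Res_G^{H}=\Res_T^{H}\circ\Res_G^T$ for $H\leq T$, both $\J_G^{F_n}$ and these generators of $(\J_G^\FIN)^*$ correspond under the retract to the ideals $\J_T^{F_n}$ and $\J_T^{H}$. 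Exactly as in the reductions of Theorems \ref{thm:IG1} and \ref{thm:IG2}, it therefore suffices to prove (ii) on $T$, where by Corollary \ref{cor:cyc} the families of finite and of finite cyclic subgroups yield equivalent filtrations.

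The key tool on $T$ is Lemma \ref{lem:tori}: in $\sigma\Kas^T_{\hat{\zahl}}$ the approximation triangle $SC(\Phi)\to\cone_\Phi\to\comp\to C(\Phi)$ of $\comp$ relative to $(\mathcal{EC},\ebk{\mathcal{EI}}^{\loc})$ is modelled by the telescope $\Phi=\Tel T/F_i$, built precisely out of the spaces $T/F_i$. I would realise the corresponding $\FIN$-Milnor tower using the $\FIN$-free complexes $T/F_1,T/F_2,\dots$, so that by Lemma \ref{lem:Milnor} the $k$-th connecting map $\iota_{k-1}^k$ is induced by $\ev_0\colon\cone_{T/F_k}\to\comp$, lies in $\J_T^{F_k}$, and $(\J_T^\FIN)^r=\Im((\iota_0^r)_*)$. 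What legitimises using only the sequence $\{F_i\}$ rather than an enumeration of all finite subgroups is exactly Lemma \ref{lem:tori}, i.e. McClure's equivalences $\Phi\times E_\FIN T\times M_n\simeq E_\FIN T\times M_n$ (Corollaries 1.4 and 1.5 of \cite{MR862427}) together with the defining properties of $\{F_n\}$ ($\pi(F_i)=T/T^0$ and every cyclic subgroup of $T^0$ eventually lying in $T^0\cap F_i$). Feeding $Q_n$ into the $n$-th stage and using that, after smashing with $M_n$, $\Phi$ already agrees with $E_\FIN T$, I would identify the $n$-th step $\Im((\iota_0^n)_*)(A,B\otimes Q_n)$ with $\J_T^{F_n}(A,B;\zahl/n\zahl)$, hence with a term of $(\J_T^\FIN)^*$; transporting the resulting cofinality back through the retract yields (ii) for $G$.

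The main obstacle is precisely this coefficient-matched cofinality in the last step: one must show that restriction to the single group $F_n$ with coefficient $\zahl/n\zahl$ forces an element to factor through the depth-$r$ part of the $\FIN$-tower, with the passage from $r$ to $n$ uniform in $A$ and $B$. Without coefficients this is false — the Remark after Theorem \ref{thm:IG2} exhibits $\iota_0^\infty\notin I_G\KK^G(N,\comp)$ — so the $\zahl/n\zahl$-coefficient must be used essentially, extracting from McClure's space-level equivalences the quantitative fact that the telescope stage $T/F_n$ smashed with $M_n$ dominates the join $\bigast_{i\le r}T/F_i$ once $n$ is large enough. Converting the homotopy-theoretic input of Lemma \ref{lem:tori}, which a priori only identifies the limiting objects, into this uniform filtration-level inclusion is the delicate heart of the argument.
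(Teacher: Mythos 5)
Your outline follows the paper's own route: the easy inclusion $(\J_G^\FIN)^r\subset\J_G^{F_k}$, the reduction to the subgroup $T$ of Proposition 4.1 of \cite{MR0248277} via Corollary \ref{cor:cyc} and the retract $\KK^G\subset\KK^T$, and the use of Lemma \ref{lem:tori} together with the Milnor tower built from the spaces $T/F_i$. However, the step you yourself isolate as ``the delicate heart of the argument''---passing from the $\KK^T$-equivalence of the limiting objects $C(\Phi)\otimes Q_n$ and $C(E_\FIN T)\otimes Q_n$ to a filtration-level interleaving that is uniform in $A$ and $B$---is exactly the step you leave unproved, so the proposal has a genuine gap at its decisive point. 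The paper closes this gap with Corollary \ref{cor:sigma}: both objects are homotopy projective limits of honest separable $\Cst$-algebras, namely $C(\Phi)\otimes Q_n=\hoprojlim_k\,(C(T/F_k)\otimes Q_n)$ and $C(E_\FIN T)\otimes Q_n=\hoprojlim_k\,(C(Y_k)\otimes Q_n)$ with $Y_k=\bigast_{i=1}^k T/F_{n_i}$, and the ${\varprojlim}$--${\varprojlim}^1$ sequence of Corollary \ref{cor:sigma} (which rests on Theorem \ref{thm:limit} and needs the terms of the systems to be genuine $\Cst$-algebras, as they are here) shows that a $\KK^T$-equivalence of two such limits automatically yields a \emph{pro-isomorphism} of the underlying projective systems in $\Kas^T$. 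That pro-isomorphism is precisely the quantitative datum you were missing: for each $k$ it produces an $l$ and $\KK^T$-morphisms between $C(Y_l)\otimes Q_n$ and $C(T/F_k)\otimes Q_n$ (and in the other direction) compatible with the structure maps, with index shifts independent of $A$ and $B$.

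Granting this, the proposition follows formally, and without your intermediate claim that $\Im\,((\iota_0^n)_*)$ in $\KK^T(A,B\otimes Q_n)$ equals the single ideal $\J_T^{F_n}(A,B;\zahl/n\zahl)$---a term-by-term identification that is stronger than needed and is not what the argument yields; only mutual cofinality is established. One applies $\KK^T(A,B\otimes\blank)$ to the two systems and notes that both filtrations in question are kernel filtrations of maps out of $\KK^T(A,B;\zahl/n\zahl)$: for the system $\{C(T/F_k)\otimes Q_n\}_k$ the kernel is exactly $\J_T^{F_k}(A,B;\zahl/n\zahl)$ by Frobenius reciprocity (Proposition \ref{prp:indres}), while for the system $\{C(Y_k)\otimes Q_n\}_k$ the kernels are interleaved with the filtration $(\J_T^{\F})^*(A,B;\zahl/n\zahl)$ exactly as in the proof of Lemma \ref{lem:proisom}, since each $C(Y_k)$ is $(\J_T^\F)^r$-injective for some $r$ and $\Im\,(\iota_0^k)_*\subset(\J_T^\F)^k$. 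Pro-isomorphic targets have equivalent kernel filtrations, which gives both inclusions at once. So your plan is the right one, but without Corollary \ref{cor:sigma} (or an equivalent ${\varprojlim}^1$ argument) the crucial cofinality---the conversion of a statement about the limits into a statement about the towers---remains unestablished, and this is the actual mathematical content of the proposition.
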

\begin{proof}
By Corollary \ref{cor:cyc}, the filtration $(\J_G^\FIN)^*$ is equivalent to the restriction of $(\J_T^\F)^*$ of $\KK^T_*(A,B)$ onto the direct summand $\KK^G_*(A,B)$ where $\F$ is the smallest family of $T$ containing $F_n$'s. Hence, it suffices to prove the assertion for $T$. Let $Y_k:=\bigast _{i=1}^k T/F_{n_i}$ be the $k$-th step of the Milnor construction and $Y:=\Tel Y_k$. By Lemma \ref{lem:tori} and Corollary \ref{cor:sigma}, we obtain the pro-isomorphism between projective systems $\{ C(Y_k) \otimes Q_n \}$  and $\{ C(T/F_k) \otimes Q_n \}$. Therefore we obtain the equivalences 
$$\{ \KK^T_*(A,B \otimes C(Y_k);\zahl /n\zahl ) \}_k \to \{ \KK^T_*(A,B \otimes C(T/T_k) ;\zahl /n\zahl )\}_k$$
of projective systems of $R(G)$-modules. Finally we get the equivalence of filtrations given by the kernels of canonical homomorphisms from $\KK^T(A,B;\zahl/n\zahl)$ to them, which is the conclusion.
\end{proof}

\begin{lem}\label{lem:elem}
Let $\FINe$ be the family of all elementary finite subgroups of $G$. Then, filtrations $(\J_G^{\FIN})^*$ and $(\J_G^{\FINe})^*$ are equivalent.
\end{lem}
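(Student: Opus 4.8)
The plan is to prove the equivalence by reducing both filtrations to the single filtration associated with the family $\FZ$ of finite cyclic subgroups, using Corollary \ref{cor:cyc} (2), which already performs all the analytic work. The only group-theoretic inputs I would need are the containments $\FZ \subseteq \FINe \subseteq \FIN$: the inclusion $\FINe \subseteq \FIN$ is immediate, and $\FZ \subseteq \FINe$ holds because every finite cyclic group is elementary (for any prime $p$ dividing its order it splits as a $p$-group times a cyclic group of order prime to $p$). The easy half of the equivalence is then formal: since $\FINe \subseteq \FIN$, any composite $(\J_G^{\FIN})^k$ is contained in the composite of just those factors $\J_G^H$ with $H \in \FINe$ occurring among its first $k$, and each such $H$ appears infinitely often, so for every $n$ there is $k$ with $(\J_G^{\FIN})^k(A,B) \subseteq (\J_G^{\FINe})^n(A,B)$ for all $A$, $B$.

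For the substantial converse I would apply Corollary \ref{cor:cyc} (2) to each of $\FIN$ and $\FINe$, obtaining $(\J_G^{\FIN})^* \sim (\J_G^{\FIN_{\cyc}})^*$ and $(\J_G^{\FINe})^* \sim (\J_G^{(\FINe)_{\cyc}})^*$. The crucial observation is that the two cyclic-generated subfamilies coincide. The topologically cyclic members of $\FIN$ are exactly the finite cyclic subgroups of $G$ (a finite topologically cyclic group is cyclic), and these generate $\FZ$; and because $\FZ \subseteq \FINe \subseteq \FIN$, the topologically cyclic members of $\FINe$ are again precisely the finite cyclic subgroups. Hence $\FIN_{\cyc} = (\FINe)_{\cyc} = \FZ$, so both filtrations are equivalent to $(\J_G^{\FZ})^*$, and by transitivity of equivalence of filtrations they are equivalent to each other. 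No finite-generation hypothesis on $A$ or $B$ enters, so the equivalence holds for all separable $\sigma$-$G$-$\Cst$-algebras.

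The main obstacle is thus entirely absorbed into Corollary \ref{cor:cyc}: the genuinely nontrivial assertion — that detecting morphisms up to the filtration by restriction to elementary, or even to cyclic, subgroups is cofinal with restriction to all finite subgroups — is exactly its content, which in turn rests on Theorem \ref{thm:IG1} and the reduction through a faithful representation. Everything else in the argument is bookkeeping about which subgroups are cyclic. If one preferred to avoid routing through $\FZ$, the converse could instead be extracted from Brauer's induction theorem applied to the augmentation-ideal filtration of $R(G)$; but that route would require finite generation of $\KK_*^G(A,B)$ (via Theorem \ref{thm:IG2}) and would therefore prove a strictly weaker statement, so the reduction to cyclic subgroups is both cleaner and more general.
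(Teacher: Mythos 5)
Your argument is correct, but it is not the paper's proof: the paper handles Lemma \ref{lem:elem} directly with Brauer's induction theorem, with no detour through cyclic subgroups. For a finite subgroup $H \leq G$ one writes $1 \in R(H)$ as $\sum_j i_{L_j}^H \xi_j$ with $L_j$ elementary subgroups of $H$ and $\xi_j \in R(L_j)$, where $i_L^H \colon \KK^L(A,B) \to \KK^H(A,B)$ is the $\ell^2$-induction map; the projection formula then gives $\Res_G^H x = \sum_j i_{L_j}^H(\xi_j \cdot \Res_G^{L_j}x)$, so vanishing of $\Res_G^{L}x$ for all $L \in \FINe$ forces $\Res_G^H x = 0$ for every finite $H$. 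This yields, for each individual $H$, the ideal-level containment $\bigcap_{L \leq H,\, L \in \FINe} \J_G^L(A,B) \subseteq \J_G^H(A,B)$ involving only the finitely many elementary subgroups of $H$, from which the hard half of the filtration equivalence follows by the same blocking bookkeeping you use for your easy half. Your route instead reduces both families to $\FZ$ via Corollary \ref{cor:cyc} (2) and transitivity of equivalence; this is legitimate and non-circular (Corollary \ref{cor:cyc} is proved in Section 3, independently of this lemma), it is exactly the pattern the paper itself uses in the proof of Theorem \ref{thm:IG2} (two families with the same cyclic part have equivalent filtrations), and it even gives the formally stronger conclusion that both filtrations are equivalent to $(\J_G^{\FZ})^*$. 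What it costs is that the entire weight now rests on Theorem \ref{thm:IG1} (flag manifolds, phantom towers, Segal's augmentation-ideal results), whereas the paper's proof of this particular lemma is short, self-contained finite-group representation theory.

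One correction to your closing remark: the Brauer route does \emph{not} require finite generation of $\KK^G_*(A,B)$ and is not weaker. You assumed Brauer's theorem must act on the augmentation-ideal filtration of $R(G)$ and then be transported via Theorem \ref{thm:IG2}; but the paper applies it directly to classes in $\KK^G(A,B)$ through the $R(H)$-module structure and the induction maps $i_L^H$, which exist with no finiteness hypotheses. So both your proof and the paper's hold for all separable $\sigma$-$G$-$\Cst$-algebras; the paper's is simply the more elementary of the two.
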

\begin{proof}
Let $H$ be a finite subgroup of $G$. For an inclusion of finite groups $L \leq H$, $i_L^H : \KK^L(A,B) \to \KK^H(A,B)$ denotes the $\ell^2$-induction functor $\Ind _L^H (\blank) \otimes _{c(H/L)}[\ell ^2 (H/L)]$. By Brauer's induction theorem, $1 \in R(H)$ is of the form $\sum _j i_{L_j}^H \xi _j$ where $L_j$'s are elementary finite subgroups of $H$ and $\xi_j \in R(L_j)$. Then, we have
$$\Res _G^Hx =\sum _j i_{L_j}^H (\xi _j) \Res _G^Hx = \sum _j  i_{L_j}^H (\xi _j \cdot \Res _G^{L_j}x)$$
for any $x \in \KK^G(A,B)$. Consequently, $\Res _G^Hx=0$ for any $H \in \FIN$ if and only if $\Res _G^L=0$ for any $L \in \FINe$.
\end{proof}

\begin{thm}[cf.\ Theorem 0.1 of \cite{MR2887199}]\label{thm:rest}
Let $G$ be a compact Lie group and let $A$ and $B$ separable $G$-$\Cst$-algebras. We assume that $\KK^G_*(A,B)$ are finitely generated for $\ast=0,1$. Then the following hold:
\begin{enumerate}
\item If $\KK^H(A,B)=0$ holds for any finite cyclic subgroup $H$ of $G$, then $\KK^G(A,B)=0$.
\item If $\xi \in \KK^G(A,B)$ satisfies $\Res_G^Hx=0$ for any elementary finite subgroup $H$ of $G$, then $x=0$.
\end{enumerate}
\end{thm}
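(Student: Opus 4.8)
The plan is to treat both assertions as instances of a single mechanism: pass to the completion, which by the Atiyah--Segal theorem (Theorem \ref{thm:AS}) is computed as $\RKK^G(E_\F G;A,B)$, and then argue that the relevant filtration is separated. Since everything is controlled by finite generation, my first step would be to reduce to finite coefficients. Because $\KK^G_*(A,B)$ is finitely generated, the map $M\to\varprojlim_n M/nM$ is injective, so by Lemma \ref{lem:coeff} it suffices to prove both statements after replacing the coefficients by $\zahl/n\zahl$ for every $n$, working in $\sigma\Kas^G_{\zahl/n\zahl}$. The advantage is that $\KK^G_*(A,B;\zahl/n\zahl)$ is then a finite group, so the completion functor and the convergence results of Section \ref{section:2} and Section \ref{section:3} apply cleanly and the $I$-adic filtration stabilizes at each stage.

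Next I would compute the completion in each case. For (1) I take the family $\FZ$ of finite cyclic subgroups. The Milnor construction (Definition \ref{def:Milnor}) presents $\tilde B$ through the cells $B\otimes C(G/F)$ with $F\in\FZ$, and Frobenius reciprocity (Proposition \ref{prp:indres}) together with the hypothesis gives $\KK^G(A,B\otimes C(G/F);\zahl/n\zahl)\cong\KK^F(\Res_G^FA,\Res_G^FB;\zahl/n\zahl)=0$; assembling the joins yields $\RKK^G(E_\FZ G;A,B;\zahl/n\zahl)=0$, so the completion vanishes. For (2) I would first use Brauer's theorem (Lemma \ref{lem:elem}) to upgrade the hypothesis from elementary finite subgroups to all finite subgroups, then use Proposition \ref{prp:Z/nZ} to identify the $(\J_G^\FIN)^*$-filtration with restriction to an exhausting sequence of finite subgroups $\{F_k\}$; the class $\xi$ restricts to zero on each $F_k$, hence lands in $(\J_G^\FIN)^\infty(A,B;\zahl/n\zahl)$, the kernel of the completion map.

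In both cases the remaining, and genuine, content is that the completion map $\KK^G(A,B;\zahl/n\zahl)\to\RKK^G(E_\F G;A,B;\zahl/n\zahl)$ is injective, i.e.\ that $(\J_G^\F)^\infty(A,B;\zahl/n\zahl)=0$; here for (1) I use $(\J_G^\FZ)^*\sim(\J_G^\FIN)^*$ from Corollary \ref{cor:cyc} to transport the machinery of Proposition \ref{prp:Z/nZ} to the family $\FZ$. To establish this I would reduce $G$ to a closed subgroup $T$ which is a finite extension of a torus and for which $\comp$ is a direct summand of $C(G/T)$ (Example \ref{exmp:flag} and Proposition 4.1 of \cite{MR0248277}), so that $\KK^G(A,B;\zahl/n\zahl)$ is a direct summand of $\KK^T(A,B;\zahl/n\zahl)$ and it suffices to treat $T$. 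On $T$ the decisive input is Lemma \ref{lem:tori}, which with finite coefficients replaces $E_\FIN T$ by the telescope of the finite quotients $T/F_k$, thereby routing the contribution of every positive-dimensional topologically cyclic subgroup through genuinely finite (cyclic) subgroups. In the language of the prime spectrum of $R(G)$ this says that, after reduction mod $n$, the support of the finite module $\KK^G(A,B;\zahl/n\zahl)$ meets no prime outside the locus detected by finite cyclic (resp.\ elementary finite) subgroups, which forces the module, resp.\ the class $\xi$, to vanish.

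I expect this last point---separatedness of the filtration with finite coefficients, equivalently the faithfulness of finite-subgroup restriction once the torus has been approximated---to be the main obstacle. It is precisely where the finite generation hypothesis is indispensable: as the Remark after Lemma \ref{lem:proisom} shows, the completion statement fails without it, and the control needed here is supplied by the Artin--Rees argument already carried out in Lemma \ref{lem:proisom} and Lemma \ref{lem:ABC}.
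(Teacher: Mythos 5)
Your outline follows the paper's proof quite closely---reduction to $\zahl/n\zahl$ coefficients via Lemma \ref{lem:coeff}, Brauer induction via Lemma \ref{lem:elem}, Proposition \ref{prp:Z/nZ} to place $\xi_n$ in $(\J_G^{\FIN})^\infty(A,B;\zahl/n\zahl)$, and Theorem \ref{thm:AS} to compute the completion---but it has a genuine gap at exactly the step you flag as the main one: the separatedness $(\J_G^{\F})^\infty(A,B;\zahl/n\zahl)=0$, equivalently injectivity of the completion map. You assert that this is ``supplied by the Artin--Rees argument already carried out in Lemma \ref{lem:proisom} and Lemma \ref{lem:ABC}.'' It is not, and it cannot be: Lemmas \ref{lem:proisom}, \ref{lem:ABC} and Theorems \ref{thm:IG2}, \ref{thm:AS} hold for an \emph{arbitrary} family, whereas separatedness fails for general families. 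Concretely, let $G=\mathbb{T}$, let $\F=\mathcal{T}$ be the trivial family, let $x=t+t^{-1}-1\in R(\mathbb{T})=\KK^{\mathbb{T}}(\comp,\comp)$, and let $B$ be the mapping cone of $x$. Then $\KK^{\mathbb{T}}_*(\comp,B)$ is $R(\mathbb{T})/(x)\cong\zahl[\zeta_6]$ in one degree and $0$ in the other, hence finitely generated; since $x\equiv 1$ modulo $I_{\mathbb{T}}$, the module $M=R(\mathbb{T})/(x)$ satisfies $M=I_{\mathbb{T}}M$, so its $I_{\mathbb{T}}$-adic completion and $\RKK^{\mathbb{T}}(E\mathbb{T};\comp,B)$ vanish although $M\neq 0$ (after the Borel construction $x$ becomes the unit $1+c^2-c^3+\cdots$ in $\K^0(\comp P^\infty)$, while equivariantly it vanishes at $t=\zeta_6$). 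So under your exact hypotheses the kernel of the completion map can be everything; Artin--Rees only yields the formal half, namely that the kernel $K$ satisfies $K=I_G^C K$, hence $\mathrm{Supp}(K)\cap V(I_G^C)=\emptyset$, for every $C$ in the family.

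What makes the families $\FZ$, $\FIN$, $\FINe$ special---and what your proposal never proves or cites---is the arithmetic of the prime spectrum of $R(G)$: every prime of positive residue characteristic, in particular every maximal ideal and hence every closed point in the support of a nonzero finitely generated $R(G)$-module, is pulled back from a finite cyclic subgroup $C$ and therefore lies in $V(I_G^C)$. Combined with the Nakayama statement above, this forces $K=0$, and in part (1) it upgrades ``completion vanishes'' to ``module vanishes.'' This input is precisely Corollary 3.3 of McClure \cite{MR862427} (resting on Segal's analysis of $\mathrm{Spec}\,R(G)$), and the paper's proof invokes it at exactly this point in both (1) and (2). Your prime-spectrum sentence gestures at the fact, but you present it as a consequence of Lemma \ref{lem:tori}---a statement about telescopes in $\sigma\Kas^T_{\hat{\zahl}}$, not about $\mathrm{Spec}\,R(G)$---and your final paragraph then replaces it by Artin--Rees, which the example above rules out. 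Once McClure's corollary is inserted, your argument does go through; note also that your mod-$n$ detour in (1) is harmless but unnecessary (the paper applies Theorem \ref{thm:AS} and the corollary integrally there), while finite coefficients are genuinely needed in (2), where Proposition \ref{prp:Z/nZ} converts ``restriction to each finite subgroup vanishes'' into membership in the infinite filtration.
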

Note that it is assumed in \cite{MR2887199} that $\KK^H(A,B)$ are finitely generated $R(G)$-modules for any closed subgroup $H \leq G$. 
\begin{proof}
Consider the homological functor $\KK^G(A, B \otimes \blank)$. Since the full subcategory of all $F$-contractible objects is colocalizing and contains all $\Cst$-algebras of the form $C(G/H) $ for $H \in \F$, we have $\KK^G(A,\tilde{B})=\KK^G(A, B \otimes C(E_\F G))=0$. Now (1) follows from Theorem \ref{thm:AS} and Corollary 3.3 of \cite{MR862427}.

Next we show (2). Let $\xi \in \KK^G(A,B)$ such that $\Res _G^H\xi =0$ for any $H \in \FINe$ and write $\xi_n \in \KK^G(A,B;\zahl /n\zahl)$ and $\hat{\xi} \in \KK^G(A,B ;\hat{\zahl})$ for the corresponding elements. By Lemma \ref{lem:coeff}, $\KK^G_*(A,B;\zahl /n\zahl)$ are also finitely generated $R(G)$-modules. Hence, by Theorem \ref{thm:AS}, Proposition \ref{prp:Z/nZ}, Lemma \ref{lem:elem} and Corollary 3.3 of \cite{MR862427}, $\Res _G^H \xi_n=0$ for all $H \in \FINe$ implies $\xi_n=0$. Using Lemma \ref{lem:coeff} and Corollary 3.3 of \cite{MR862427} again, we obtain $\hat{\xi}=0$ and hence $\xi=0$.
\end{proof}

\section{Generalization for groupoids and proper actions}\label{section:5}
In this section, we generalize the Atiyah-Segal completion theorem for equivariant $\KK$-theory of certain proper topological groupoids. Groupoid equivariant $\K$-theory and $\KK$-theory are studied, for example, in \cite{MR1686846} and \cite{MR1671260}. 

First, we recall some conventions on topological groupoids. Let $\G =(\G ^1 , \G ^0 , s,r)$ be a second countable locally compact Hausdorff topological groupoid with a haar system. 
We assume that $\G$ is proper, that is, the combination of the source and the range maps $(s,r):\G^1 \to \G^0 \times \G^0$ is proper. We write $[\G]$ for the orbit space $\G ^0 /\G$ of $\G$ and $\pi : \G ^0 \to [\G]$ for the canonical projection. For a closed subset $S \subset \G ^0$, let $\G_S$ denote the full subgroupoid given by $\G_S^1:=\{ g \in \G^1 \mid s(g), r(g) \in S \}$ and $\G_S^0:=S$. 

Hereafter we deal with proper groupoids satisfying the following two conditions.
\maa{
\text{\parbox{.85\textwidth}{For any $x \in \G^0$, there is an open neighborhood $U$ of $x$, a compact $\G _x^x$-space $S_x$ with a $\G _x^x$-fixed base point $x_0$ and a groupoid homomorphism $\varphi _x:\G_x^x \ltimes S_x \to \G_{\overline{U}}$ such that
\begin{itemize}
\item the inclusion $\{ x_0 \} \to S_x$ is a $\G _x^x$-homotopy equivalence,
\item the homomorphism $\varphi _x $ is injective and a local equivalence (Definition A.4 of \cite{MR2860342}) such that $\varphi _x(x_0)=x$ and $\varphi _x |_{\G _x^x \ltimes \{ x_0 \}} =\id _{\G_x^x} $,
\end{itemize}
}}\label{cond:gr1}\\
\text{\parbox{.85\textwidth}{The groupoid $\G$ admits a finite dimensional unitary representation whose restriction on $\G_x^x$ is faithful for each $x \in \G^0$.}\label{cond:gr2}
}}
We say that a triple $(\overline{U}, S_x, \varphi _x)$ as in (\ref{cond:gr1}) is a slice of $\G$ at $x$.
\begin{exmp}
The slice theorem for $G$-CW-complexes (Theorem 7.1 of \cite{LcukUribe}, see also Lemma 4.4 (ii)) implies that (\ref{cond:gr1}) holds for $\G$ such that for any $x \in \G$ there is a saturated neighborhood $U$ of $x$ and a local equivalence $G \ltimes X \to \G _U$ where $G$ are Lie groups and $X$ are $G$-CW-complexes. 
\end{exmp}

\begin{exmp}
All proper Lie groupoid satisfies (\ref{cond:gr1}). Actually, the slice theorem for proper Lie groupoids (Theorem 4.1 of \cite{MR2292634}) implies that for any orbit $\mathcal{O}$ of $\G$ there isa  tubular neighborhoof $U$ of $\mathcal{O}$ and a local equivalence $\G_{x}^x \ltimes N_x\mathcal{O} \to \G_U$ where $x \in \mathcal{O}$ and $N\mathcal{O}$ is the normal bundle of $\mathcal{O}$. On the other hand, a proper Lie groupoid does not satisfies (\ref{cond:gr2}) in general even if it is an action groupoid. Actually, let $G$ be the group as Section 5 of \cite{MR1838997}. Then, the groupoid $\G:=G \ltimes \real$ is actually a counterexample. To see this, compare Lemma \ref{lem:grcond} (2) below with the fact that $\Im (R(\G) \to R(\G _x^x) \cong R(T))=R(T/K)$ (see p.615 of \cite{MR1838997}).
\end{exmp}

\begin{exmp}
By Lemma \ref{lem:grcond} below and Theorem 6.15 of \cite{MR2499440}, an action groupoid $G \ltimes X$ satisfies (\ref{cond:gr2}) if 
\begin{itemize}
\item $G$ is a closed subgroup of an almost connected group $H$ or
\item $G$ is discrete, $X/G$ has finite covering dimension and all finite subgroups of $G$ have order at most $N$ for some $N \in \zahl _{>0}$.
\end{itemize}
\end{exmp}

\begin{lem}\label{lem:grcond}
Let $\G$ be a proper groupoid whose orbit space is compact.
\begin{enumerate}
\item If the Hilbert $\G$-bundle $L^2\G$ is AFGP (Definition 5.14 of \cite{MR2119241}), then $\G$ satisfies (\ref{cond:gr2}).
\item If $\G$ satisfies (\ref{cond:gr2}), the representation ring $R(\G_x^x)$ is a noetherian module over $R(\G):=\KK ^\G(\comp,\comp)$ for any $x \in \G^0$.
\item If $\G$ satisfies (\ref{cond:gr1}) and (\ref{cond:gr2}), then $R(\G) $ is a noetherian ring.
\end{enumerate}
\end{lem}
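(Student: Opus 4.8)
The plan is to prove the three assertions in order, feeding (2) into (3), and I would extract a single global faithful representation early on so that the whole argument takes place over one fixed noetherian base ring.

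For (1), I start from the fact that the fibre of $L^2\G$ over a point $x$ is the left regular representation $L^2(\G_x^x)$ of the compact isotropy group $\G_x^x$, which is faithful by Peter--Weyl. The AFGP hypothesis provides an increasing sequence of $\G$-invariant finitely generated projective subbundles $E_n \subseteq L^2\G$ exhausting it, and each $E_n$ is precisely a finite-dimensional continuous unitary representation of $\G$. For fixed $x$ the restrictions $E_n|_{\G_x^x}$ increase to $L^2(\G_x^x)$, so for some index $n_x$ the representation $E_{n_x}|_{\G_x^x}$ is already faithful. The key point is that faithfulness propagates to a neighbourhood: via a slice $(\overline U, S_x, \varphi_x)$ as in (\ref{cond:gr1}), every isotropy group $\G_y^y$ with $y \in \overline U$ is isomorphic to a stabiliser subgroup of the $\G_x^x$-action on $S_x$, hence to a subgroup of $\G_x^x$, and a faithful representation restricts to a faithful one on any subgroup. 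Covering the compact orbit space $[\G]$ by finitely many such slice neighbourhoods $\pi(U_{x_1}),\dots,\pi(U_{x_m})$ and setting $n := \max_j n_{x_j}$, monotonicity of the $E_n$ shows the single representation $E_n$ is faithful on every $\G_x^x$, which is (\ref{cond:gr2}).

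For (2), fix $x$, put $K := \G_x^x$, and let $\rho$ be a representation supplied by (\ref{cond:gr2}), so that $\rho_x := \Res_K \rho$ embeds $K$ as a closed subgroup of some $U(N)$. Pulling the exterior powers of the standard representation of $U(N)$ back along $\rho$ produces honest $\G$-representations, so the image of $R(\G) \to R(K)$ contains the subring $A := \mathrm{im}(R(U(N)) \to R(K))$. Now $R(U(N)) = \zahl[\lambda^1,\dots,\lambda^{N-1},\lambda^N,(\lambda^N)^{-1}]$ is a finitely generated $\zahl$-algebra, hence so is its homomorphic image $A$, which is therefore noetherian; and since $K \leq U(N)$ is closed, $R(K)$ is a finitely generated $R(U(N))$-module (Segal), so a finitely generated, and thus noetherian, $A$-module. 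As $A$ lies inside the image of $R(\G)$, every $R(\G)$-submodule of $R(K)$ is in particular an $A$-submodule, so the ascending chain condition passes down and $R(\G_x^x)$ is a noetherian $R(\G)$-module.

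For (3), I fix a global representation $\rho\colon \G \to U(N)$ from (\ref{cond:gr2}), faithful on each fibre, and let $A_0 := \mathrm{im}(R(U(N)) \to R(\G))$; as above $A_0$ is a finitely generated $\zahl$-algebra, hence noetherian, and all the $\KK$-groups below are $A_0$-modules in a natural way through $\rho$. The goal is to show that $R(\G)$ is a finitely generated $A_0$-module, for then, being module-finite over a central noetherian subring, $R(\G)$ is itself a noetherian ring. I would prove module-finiteness by induction on the number of pieces of a finite saturated closed cover $\G^0 = W_1 \cup \dots \cup W_m$, obtained by covering $[\G]$ with finitely many slice images (compactness of $[\G]$) and shrinking, arranged so small that the slice at $x_i$ together with the equivariant contractibility of $S_{x_i}$ and the local equivalence $\varphi_{x_i}$ identifies $R(\G_{W_i})$ with $R(\G_{x_i}^{x_i})$, which by (2) (applied to the global $\rho$) is finitely generated over $A_0$. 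Writing $Z := W_1 \cup \dots \cup W_{m-1}$ and running the groupoid Mayer--Vietoris six-term sequence for the decomposition $\G^0 = Z \cup W_m$ relates $R(\G)$ to $R(\G_Z)$, $R(\G_{W_m})$ and $R(\G_{Z \cap W_m})$; by the inductive hypothesis the outer terms are finitely generated $A_0$-modules, and since $A_0$ is noetherian and every map in the sequence is $A_0$-linear, finite generation propagates to the middle term $R(\G)$.

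The main obstacle is the module-finiteness step in (3). Parts (1) and (2) only repackage standard facts (Peter--Weyl plus compactness, and the Segal module-finiteness of $R(K)$ over $R(U(N))$), whereas (3) is where the geometry enters. I expect to spend most of the effort (i) setting up the groupoid-equivariant Mayer--Vietoris sequence for a saturated closed cover of $\G^0$ and checking that its connecting maps are $A_0$-linear, and (ii) in the point-set bookkeeping needed to shrink the saturated pieces $W_i$ so that each $R(\G_{W_i})$ is genuinely identified with $R(\G_{x_i}^{x_i})$ via the slice. I note that the cruder route—assembling $\phi\colon R(\G) \to \prod_i R(\G_{x_i}^{x_i})$, showing $\ker\phi$ is nilpotent through the groupoid analogue of Lemma \ref{lem:local}, and invoking Eakin--Nagata on $R(\G)/\ker\phi$—proves $R(\G)/\ker\phi$ noetherian but does not by itself control $\ker\phi$ as an $R(\G)$-module, so I prefer to carry finite generation over the fixed noetherian ring $A_0$ throughout, which yields noetherianity directly.
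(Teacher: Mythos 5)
Your proof of part (1) has a genuine gap: you invoke the slice condition (\ref{cond:gr1}) to propagate faithfulness from $\G_x^x$ to the nearby isotropy groups, but part (1) of the lemma does not assume (\ref{cond:gr1}) --- it is stated for an arbitrary proper groupoid with compact orbit space, and the paper applies it (e.g.\ to action groupoids $G \ltimes X$ in combination with Theorem 6.15 of \cite{MR2499440}) in situations where (\ref{cond:gr1}) is not known to hold. The paper's own argument avoids slices entirely: it chooses $n$ with $\pi_n|_{\G_x^x}$ faithful and argues by continuity of the finite-dimensional subrepresentation $\pi_n$ that faithfulness persists on a saturated neighbourhood of $x$, and then invokes compactness of $[\G]$. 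So as written you prove only the weaker implication ``(\ref{cond:gr1}) and AFGP imply (\ref{cond:gr2})'', which would not suffice for the paper's uses of part (1).

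Part (2) is essentially the paper's proof (a map out of $R(U(N))$ landing in the image of $R(\G)$, whose composite to $R(\G_x^x)$ is induced by the embedding $\G_x^x \leq U(N)$, followed by Segal's finiteness theorem); the only caveat is that a finite-dimensional representation of $\G$ is an action on a hermitian bundle, not a homomorphism $\rho : \G \to U(N)$, so ``pulling back along $\rho$'' must be read as the associated-bundle map $[V] \mapsto [U(\Hilb) \times_{U(N)} V]$ --- equivalently, exterior powers and the inverse determinant line bundle of the given bundle --- exactly as in the paper. In part (3), your reformulation (finite generation over the noetherian subring $A_0$ rather than noetherianity over $R(\G)$, as in the paper) is a legitimate equivalent route. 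However, your induction does not close as stated: in the Mayer--Vietoris step for $\G^0 = Z \cup W_m$ the outer terms involve $\G_{Z \cap W_m}$, and $Z \cap W_m = \bigcup_{i<m}(W_i \cap W_m)$ is \emph{not} a union of slice pieces, so the inductive hypothesis --- which concerns covers whose pieces satisfy $R(\G_{W_i}) \cong R(\G_{x_i}^{x_i})$ --- says nothing about it; moreover the relevant outer term sits in odd degree, so the induction has to carry the $\KK^\G_1$-groups along as well. The paper's ``Mayer--Vietoris argument'' is admittedly just as terse on this point, but since you identified this step as the crux of your argument, controlling the intersections (for instance by choosing the cover so that all intersections again sit inside slices with finitely generated equivariant $\K$-theory) is precisely what remains unproved.
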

\begin{proof}
First we check (1). Let $(\Hilb_n , \pi _n)$ be an increasing sequence of finite dimensional subrepresentations of $L^2\G$ whose union is dense. For any $x \in \G^0$, there is $n>0$ such that $\pi _n |_{\G_x^x}$ is faithful. By continuity, there is a saturated neighborhood $U$ of $x$ such that $\pi _n|_{\G_y^y}$ is faithful for any $y \in U$. We obtain the conclusion since $[\G]$ is compact.

To see (2), take an $n$-dimensional unitary representation $\Hilb$ of $\G$ and let $U(\Hilb)$ be the corresponding principal $U(n)$-bundle. Then we have the ring homomorphism
$$R(U(n)) \to R(\G); \ [V] \mapsto [U(\Hilb) \times _{U(n)}V].$$
Now, the composition $R(U(n)) \to R(\G) \to R(\G _x^x)$ is actually induced from a group homomorphism $\G _x^x \to U(n)$ which is injective by assumption. By Proposition 3.2 of \cite{MR0248277}, $R(\G _x^x)$ is a finitely generated (and hence noetherian) module over $R(U(n))$. Consequently, we obtain that $R(\G _x^x)$ is noetherian as an $R(\G)$-module. 

If $\G$ satisfies (\ref{cond:gr1}) in addition, there is an open covering $\{ U_i \}$ and $x_i \in U_i$ such that $R(\G _{\overline{U}_i})$ is isomorphic to $R(\G_{x_i}^{x_i})$ and in particular is a noetherian $R(\G)$-module. By a Mayer-Vietoris argument, we obtain that $R(\G)$ itself is a noetherian $R(\G)$-module.
\end{proof}

The induction for groupoid $\Cst$-algebras is given in Definition 4.18 of \cite{MR2570950}. Let $\G$ be a second countable locally compact groupoid and $\mathcal{H}$ be a subgroupoid. Let $(\Omega , \sigma ,\rho)$ be a Hilsum-Skandalis morphism \cite{MR925720} from $\G$ to $\mathcal{H}$ given by
$$\Omega :=\{ g\in \G^1 \mid s(g) \in \mathcal{H} ^0 \}, \ \sigma :=s :\Omega \to \mathcal{H} ^0, \ \rho :=r : \Omega \to \G ^0$$
together with the left $\G$-action and the right $\mathcal{H}$-action given by the composition. The induction functor $\sigma \GCsep{\mathcal{H}} \to \sigma \GCsep{\G}$ is given by 
$$\Ind _\mathcal{H} ^\G A=\Omega ^*A:=(C_b (\Omega) \otimes _{\mathcal{H}^0} A)^\mathcal{H}.$$
In the same way as the case of groups, it induces the functor between Kasparov categories.

\begin{prp}
Let $\G$ be a proper groupoid and let $\mathcal{H}$ be a closed subgroupoid. Then, the induction functor $\Ind _\mathcal{H}^\G$ is the right adjoint of the restriction functor $\Res _\G ^\mathcal{H}$, that is,
$$\KK^\G(A,\Ind _\mathcal{H}^\G B) \cong \KK^\mathcal{H}(\Res _\G^\mathcal{H} A,B).$$
\end{prp}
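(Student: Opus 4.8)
The plan is to reproduce the proof of Proposition~\ref{prp:indres} verbatim, with the Hilsum--Skandalis bimodule $\Omega$ playing the role of the group $G$ and properness of $\G$ playing the role that cocompactness of $H$ played there. To exhibit the adjunction $\Res_\G^\mathcal{H}\dashv\Ind_\mathcal{H}^\G$ it suffices to construct a unit $\eta_B\colon B\to\Ind_\mathcal{H}^\G\Res_\G^\mathcal{H}B$ and a counit $\varepsilon_A\colon\Res_\G^\mathcal{H}\Ind_\mathcal{H}^\G A\to A$ as equivariant $\ast$-homomorphisms (these induce $\KK$-cycles, as both functors are defined from $\ast$-homomorphisms) and then to check the two triangle identities in $\sigma\Kas^\G$ and $\sigma\Kas^\mathcal{H}$.

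First I would construct the counit. Since $\Omega/\mathcal{H}\cong\G^0$, the algebra $\Ind_\mathcal{H}^\G A=(C_b(\Omega)\otimes_{\mathcal{H}^0}A)^\mathcal{H}$ is a $\G$-$\Cst$-algebra over $\G^0$, and its restriction $\Res_\G^\mathcal{H}\Ind_\mathcal{H}^\G A$ carries the residual $\mathcal{H}$-action inherited from left multiplication by $\mathcal{H}\leq\G$ on $\Omega$. The inclusion $\mathcal{H}^0\hookrightarrow\Omega$ of identity arrows is $\mathcal{H}$-equivariant and lands in the diagonal locus $\sigma=\rho$, so evaluating invariant sections along it gives an $\mathcal{H}$-equivariant $\ast$-homomorphism $\varepsilon_A\colon f\mapsto f|_{\mathcal{H}^0}$, the direct analog of $f\mapsto f(e)$.

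Next the unit. For a $\G$-algebra $B$ over $\G^0$, I would define $\eta_B(b)$ to be the section $g\mapsto g^{-1}\cdot b_{\rho(g)}$, valued in the fibre over $\sigma(g)\in\mathcal{H}^0$; this is $\mathcal{H}$-invariant by the cocycle identity and is the groupoid counterpart of $a\mapsto a\otimes 1_{G/H}$. Here properness of $\G$ and closedness of $\mathcal{H}$ guarantee that such sections are bounded and continuous and that the construction is compatible with the $C_b(\Omega)\otimes_{\mathcal{H}^0}(\blank)$ structure, so that $\eta_B$ is a genuine $\G$-equivariant $\ast$-homomorphism.

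Finally I would verify the triangle identities by a direct computation on sections, exactly as in Proposition~\ref{prp:indres}: the composites $\Res_\G^\mathcal{H}B\to\Res_\G^\mathcal{H}\Ind_\mathcal{H}^\G\Res_\G^\mathcal{H}B\to\Res_\G^\mathcal{H}B$ and $\Ind_\mathcal{H}^\G A\to\Ind_\mathcal{H}^\G\Res_\G^\mathcal{H}\Ind_\mathcal{H}^\G A\to\Ind_\mathcal{H}^\G A$ both unwind to the identity, because evaluating an induced-then-restricted section at the unit arrows $\mathcal{H}^0$ recovers the original element. I expect the main obstacle to be purely bookkeeping: checking that the left and right $\mathcal{H}$-actions on $\Omega$, together with the invariance condition defining $(C_b(\Omega)\otimes_{\mathcal{H}^0}A)^\mathcal{H}$, interact so that $\varepsilon_A$ and $\eta_B$ are well-defined, equivariant, and natural, and that properness supplies the continuity and boundedness needed in place of the compactness used implicitly in the group case.
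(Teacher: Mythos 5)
Your construction is essentially the paper's own proof. The paper defines the counit as restriction of invariant sections to the subset $\Delta\subset\Omega$ of identity arrows of $\mathcal{H}$ (your evaluation along $\mathcal{H}^0\hookrightarrow\Omega$), defines the unit as $b\mapsto b\otimes 1_{\Omega/\mathcal{H}}$ under the identification $\Ind_{\mathcal{H}}^{\G}\Res_{\G}^{\mathcal{H}}B\cong C_b(\Omega/\mathcal{H})\otimes_{\G^0}B$, $f\mapsto\bigl(\gamma\mapsto\alpha_{\gamma^{-1}}(f(\gamma))\bigr)$ --- which is exactly your section $g\mapsto g^{-1}\cdot b_{\rho(g)}$ transported through that isomorphism --- and then verifies the two triangle identities by the same direct computation on sections as in Proposition~\ref{prp:indres}.

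One caveat: your opening claim that $\Omega/\mathcal{H}\cong\G^0$ is false in general; already in the group case $\Omega/\mathcal{H}=G/H$ while $\G^0$ is a point, and for $\G=G\ltimes X$, $\mathcal{H}=H\ltimes X$ one gets $\Omega/\mathcal{H}\cong G\times_H X$. What is true, and all that your argument actually uses, is that $\rho$ descends to a map $\Omega/\mathcal{H}\to\G^0$ whose fibres play the role of $G/H$; this is what makes $\Ind_{\mathcal{H}}^{\G}A$ a $C_0(\G^0)$-algebra and yields the paper's identification displayed above, with properness of $\G$ standing in for cocompactness so that the constant section $1_{\Omega/\mathcal{H}}$ is a legitimate multiplier, exactly as you indicate. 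Since neither your counit nor your unit formula in fact invokes the erroneous isomorphism, the proof goes through unchanged.
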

\begin{proof}
We have the isomorphism 
$$\Ind _{\mathcal{H}}^\G \Res _\G ^{\mathcal{H}} A=(C_b(\Omega) \otimes _{\mathcal{H}^0}A)^{\mathcal{H}} \xra{\cong} C_b(\Omega /\mathcal{H}) \otimes _{\G ^0} A; \ a(\gamma) \mapsto \alpha _{\gamma ^{-1}}(a(\gamma)).$$
Let $\Delta$ be the subspace of $\Omega$ consisting of all identity morphisms in $\mathcal{H}$. The same argument as Proposition \ref{prp:indres} we can observe that the following $\ast$-homomorphisms
\ma{
\varepsilon _A :\Res _\G^\mathcal{H} \Ind _\mathcal{H}^\G A \cong (C(\Omega)\otimes _X A)^\mathcal{H} \to A;&\  f \mapsto f|_{\Delta}\\
\eta _B: B \to \Ind _\mathcal{H}^\G \Res _\G^\mathcal{H} B \cong C(\Omega /\mathcal{H})\otimes _X B; &\  a \mapsto a \otimes 1_{\Omega /\mathcal{H}}
}
gives the unit and counit of the adjunctions.
\end{proof}

Now we introduce two generalizations of Theorem \ref{thm:AS}. First we consider a proper groupoid $\G$ satisfying (\ref{cond:gr1}) and (\ref{cond:gr2}). For simplicity, we assume that $[\G]$ is connected. Then we have a ring homomorphism $\dim :R(\G) \to \zahl$. Set $I_\G := \Ker \dim $ be the augmentation ideal. We regard a closed subspace $S \subset \G ^0$ as a subgroupoid consisting of all identity morphisms on $x \in S$. We write $\J _\G^S$ for the homological ideal $\Ker \Res _\G^S$ of $\sigma \Kas ^\G$ and in particular set $\J_\G:=\J_\G^{\G^0}$.  We say that $\sigma$-$\G$-$\Cst$-algebras of the form $A=\Ind _{\G ^0}^\G A_0$ is trivially induced and we write $\mathcal{TI}$ for the class of trivially induced objects. Similarly, we say that $\sigma$-$\G$-$\Cst$-algebras $B$ such that $\Res _\G^{\G^0} B$ is $\KK^{\G ^0}$-contractible is trivially contractible and we write $\mathcal{TC}$ for the class of trivially contractible objects. 

\begin{lem}\label{lem:saturation}
Let $(\overline{U}, S,\varphi )$ be a slice of $\G$ at $x \in \G^0$ and let $V$ be the smallest saturated closed subspace of $\G^0$ containing $\varphi (S)$. 
\begin{enumerate}
\item Let $A$ be a $\sigma$-$\G$-$\Cst$-algebra. If $\Res _\G^S A$ is $\KK^S$-contractible, then $\Res _\G^V A$ is $\KK^V$-contractible. 
\item If $V$ is compact, the filtrations $\J_{\G_S}^*$ and $\J_{\G_V}^*$ are equivalent under the isomorphism $\sigma \Kas ^{\G_S} \cong \sigma \Kas ^{\G_V}$.
\end{enumerate}
\end{lem}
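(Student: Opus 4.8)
The plan is to deduce both statements from the slice structure together with a unit-space analogue of Lemma \ref{lem:local}. First I would record the groupoid that organises everything. Since $\varphi$ is an injective local equivalence and $V=\G\cdot\varphi(S)$ is the saturation of $\varphi(S)$, the subspace $S=\varphi(S)$ meets every orbit of $\G_V$; hence the inclusion of full subgroupoids $\G_S\hookrightarrow\G_V$ is a Morita equivalence and induces the category isomorphism $\sigma\Kas^{\G_S}\cong\sigma\Kas^{\G_V}$ referred to in the statement. Writing $\Omega:=\{g\in\G^1\mid s(g)\in S,\ r(g)\in V\}$, the equivariant structure of any $A$ gives a canonical isomorphism $r^*(\Res_\G^V A)\cong s^*(\Res_\G^S A)$ of $\Cst$-bundles over $\Omega$, and properness of $\G$ together with compactness provides, for each point of $V$, a local section of the open surjection $r\colon\Omega\to V$. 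These local sections let me transport the $C(S)$-linear data over $S$ to $C(V)$-linear data over suitable closed pieces of $V$, and this is the mechanism underlying both parts.

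For (1), I would choose a finite closed cover $V=V_1\cup\cdots\cup V_k$ such that over each $V_i$ a local section of $r$ realises $\Res_\G^{V_i}A$ as a pullback of $\Res_\G^S A$ along a map $V_i\to S$; this is possible because $V=\G\cdot S$ is compact and $\G$ is proper. As $\Res_\G^S A$ is $\KK^S$-contractible, each pullback $\Res_\G^{V_i}A$ is $\KK^{V_i}$-contractible. I then invoke the unit-space analogue of Lemma \ref{lem:local}, by which the filtration associated to the finite family $\{\Ker\Res^{V_i}\}$ of ideals is trivial. Since $\id_{\Res_\G^V A}$ lies in each $\Ker\Res^{V_i}$, it lies in every term of this filtration and hence vanishes, so $\Res_\G^V A$ is $\KK^V$-contractible.

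For (2), the inclusion $S\subset V$ gives $\Res_{\G_S}^S\circ\Phi=\Res^S_V\circ\Res_{\G_V}^V$, where $\Phi$ denotes the Morita equivalence and $\Res^S_V$ restriction of spaces; hence $\J_{\G_V}=\Ker\Res_{\G_V}^V\subseteq\Phi^{-1}\J_{\G_S}$, and so $\J_{\G_V}^r\subseteq\J_{\G_S}^r$ under the identification. The substantial direction is the reverse: for each $n$ I must produce $m$ with $\J_{\G_S}^m\subseteq\J_{\G_V}^n$. Here I would again cover the compact space $V$ by finitely many closed pieces on which $\Res_{\G_V}^V$ is controlled by $\Res_\G^S$ via local sections of $r$, using the $\G_x^x$-homotopy equivalence $\{x_0\}\hookrightarrow S$ to reduce the slice datum to the isotropy group, and then combine the finitely many local estimates into a single global finite power by the covering argument of Lemma \ref{lem:local}, exactly as in the proof of Theorem \ref{thm:IG2}.

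The main obstacle is precisely this reverse inclusion in (2): converting the slice-level information that a class lies in a high power of $\Ker\Res^S$ into the uniform statement that it lies in a bounded power of $\Ker\Res^V$ over all of $V$ at once. Fiberwise or pointwise vanishing over a slice never upgrades to a $C(V)$-linear statement by itself, so the whole weight falls on extracting a global finite bound from the local slice data, which is where compactness of $V$ and the finite closed cover feeding Lemma \ref{lem:local} are indispensable.
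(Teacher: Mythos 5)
Your overall mechanism is the paper's: transport data from the slice $S$ to closed pieces of $V$ along arrows of $\G$, then combine finitely many pieces by the composition-of-ideals argument of Lemma \ref{lem:local}. Your part (2) in particular is essentially the paper's proof: since $V$ is compact there, one covers $\G_V^0$ by finitely many closed sets $W_i$ admitting continuous maps $f:W_i\to\G^1$ with $s\circ f=\id$ and $r\circ f(W_i)\subset S$, deduces $\J_{\G_V}^S\subset\J_{\G_V}^{W_i}$ from $\Res_{\G_V}^{W_i}=\Ad f\circ\Res_{\G_V}^{S}$, and concludes $(\J_{\G_V}^S)^n\subset\J_{\G_V}^{W_1}\circ\cdots\circ\J_{\G_V}^{W_n}\subset\J_{\G_V}$ exactly as in Lemma \ref{lem:local}; the detour through the $\G_x^x$-homotopy equivalence $\{x_0\}\hookrightarrow S$ is not needed.

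Part (1), however, contains a genuine gap: you assert that ``$V=\G\cdot S$ is compact'' in order to extract a finite closed cover $V=V_1\cup\cdots\cup V_k$, but compactness of $V$ is not a hypothesis of (1) --- it is assumed only in (2), precisely because it is not automatic. For a proper groupoid the saturation of a compact set is closed but in general not compact: the translation groupoid $\real\ltimes\real$ is proper, free, has compact orbit space, and satisfies (\ref{cond:gr1}) and (\ref{cond:gr2}), yet the saturation of a point is all of $\real$. Without a finite cover, the trick of placing $\id_{\Res_\G^V A}$ in each $\Ker\Res^{V_i}$ and then in a vanishing finite composition does not get off the ground. The paper avoids this by proving only the local statement --- $\Res_\G^W A$ is $\KK^W$-contractible for a closed neighbourhood $W$ of each point of $V$ --- and then invoking the general fact that a locally contractible $X$-$\Cst$-algebra is globally contractible, via a Mayer--Vietoris argument that requires no finite cover. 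A smaller point: the local sections of $r:\Omega\to V$ landing in $S$ exist because $\varphi$ is a local equivalence; properness alone does not provide them, since an open surjection of locally compact spaces need not admit continuous local sections, so your justification ``properness together with compactness'' should be replaced by an appeal to the slice condition (\ref{cond:gr1}).
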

\begin{proof}
Since the homomorphism $\varphi :\G _x^x \ltimes S \to \G$ is a local equivalence, for any $y \in \G_V^0$ we have a closed subspace $W$ of $\G_V^0$ containing $y$ in its interior and a continuous map $f : W  \to \G^1$ such that $s \circ f=\id$ and $r \circ f(W) \subset S$, which induces a group homomorphism 
$$\{ \Ad f(w) \} _{w \in W} : \KK^S(\Res _\G^S A, \Res _\G^S B) \to \KK^W (\Res _\G ^W A,\Res _\G ^W B).$$
Since $\Res _{\G_V}^{W}=\Ad f(u) \circ \Res _{\G_V}^{S}$, we obtain $\J _{\G_V}^W \subset \J_{\G_V}^S$. 

In particular, if $\Res _\G ^S A$ is $\KK^S$-contractible, then $\Res _\G ^WA$ is $\KK^W$-contractible. We obtain (1) because any locaally contractible $X$-$\Cst$-algebra is globally contractible (which follows from a Mayer-Vietoris argument). 

To see (2), let $\{ W_i \}$ be a finite family of closed subspaces of $\G_V^0$ obtained as above such that $\bigcup W_i=\G_V^0$. Then, in the same way as Lemma \ref{lem:local}, we obtain $(\J _{\G_V}^{S})^n \subset \J_{\G_V}^{W_1} \circ \cdots \J_{\G_V}^{W_n} \subset \J _{\G _V}^V$. 
\end{proof}

Consider the following assumption for a pair $(A,B)$ of $\sigma$-$\G$-$\Cst$-algebras corresponding to the assumption that $\KK^G_*(A,B)$ are finitely generated $R(G)$-modules in Theorem \ref{thm:AS}:
\maa{\text{\parbox{.90\textwidth}{There is a basis $\{ U_i \} $ of the topology of $\G$ such that $R(\G)$-modules $\KK^{\G _{\overline{U}_i}}_*(\Res _\G ^{\G_{\overline{U}_i}},\Res _\G ^{\G_{\overline{U}_i}} B)$ are finitely generated. }}\label{cond:gr3}}

\begin{thm}\label{thm:groupoid}
Let $\G$ be a proper groupoid satisfying (\ref{cond:gr1}) and (\ref{cond:gr2}) whose orbit space is compact. Then the following holds:
\begin{enumerate}
\item A pair $(\mathcal{TC},\ebk{\mathcal{TI}}^{\loc})$ is complementary in $\sigma \Kas^\G$.
\item For any pair of $\sigma$-$G$-$\Cst$-algebras $(A,B)$ satisfying (\ref{cond:gr3}), there are isomorphisms of $R(\G)$-modules
$$\KK^\G(A,B)_{I_\G}^{\myhat} \cong \KK^\G (A,\tilde{B}) \cong \RKK^\G(E\G; A,B) \cong \sigma \Kas ^\G /\mathcal{TC} (A,B).$$
\end{enumerate}
\end{thm}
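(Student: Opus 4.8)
The plan is to mimic the proof of Theorem \ref{thm:AS} for groups, replacing the family $\F$ of subgroups by the single ideal $\J_\G=\Ker\Res_\G^{\G^0}$ and replacing the representation ring $R(G)$ by $R(\G)=\KK^\G(\comp,\comp)$, while handling the local-to-global passage via the slice structure. First I would prove assertion (1). By definition $\mathcal{TC}=\Ncat_{\J_\G}$ and $\mathcal{TI}\subset\Icat_{\J_\G}$, so by Theorem \ref{thm:homideal} it suffices to check that $\sigma\Kas^\G$ has enough $\J_\G$-injectives and that every $\J_\G$-injective lies in $\ebk{\mathcal{TI}}^\loc$. Exactly as in the proof of Theorem \ref{thm:decomp}, the morphism $A\to\Ind_{\G^0}^\G\Res_\G^{\G^0}A$ is $\J_\G$-monic with $\J_\G$-injective target (using that $\Ind_{\G^0}^\G$ is the right adjoint of $\Res_\G^{\G^0}$ by the preceding Proposition), and a $\J_\G$-injective object is a retract of such a trivially induced object; this gives both properties at once. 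One also needs $\J_\G$ to be compatible with countable direct products, which follows as in the group case since $\Res_\G^{\G^0}$ commutes with products.

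For assertion (2), the strategy is to reconstruct the three lemmas underlying Theorem \ref{thm:AS}: the pro-isomorphism (Lemma \ref{lem:proisom}), the comparison of filtrations $(\J_\G)^*$ versus the $I_\G$-adic filtration (Theorem \ref{thm:IG2}), and the ABC spectral sequence convergence (Lemma \ref{lem:ABC}). The Milnor construction goes through verbatim: choosing $\F$-free (here $\G^0$-free, i.e.\ trivially-free) finite complexes, one builds a phantom tower whose homotopy projective limit computes $\tilde{B}$, and $E\G$ plays the role of $E_\F G$ so that $\KK^\G(A,\tilde B)\cong\RKK^\G(E\G;A,B)$ via the universal proper $\G$-space. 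The last isomorphism $\KK^\G(A,\tilde B)\cong\sigma\Kas^\G/\mathcal{TC}(A,B)$ is the category equivalence $I:\Tcat/\Ncat\to\Icat$ from the complementary pair of part (1). The finite generation needed to run the Artin--Rees argument and the Mittag--Leffler/$\varprojlim^1$ vanishing is supplied by hypothesis (\ref{cond:gr3}) together with Lemma \ref{lem:grcond}(3), which guarantees $R(\G)$ is noetherian so that submodules of finitely generated modules are finitely generated.

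The main obstacle is the local-to-global comparison of the two filtrations, i.e.\ the groupoid analogue of Theorem \ref{thm:IG2}: there is no single compact Lie group whose representation theory controls $\J_\G$, and the augmentation ideal $I_\G$ is defined only through $\dim:R(\G)\to\zahl$ on the connected orbit space. The key device here is Lemma \ref{lem:saturation}: using a slice $(\overline U,S,\varphi)$ at each $x$, assumption (\ref{cond:gr1}) lets me transport the contractibility/filtration data from the isotropy group $\G_x^x$ (a compact Lie group, where Theorem \ref{thm:IG2} applies) to a saturated neighborhood, and Lemma \ref{lem:saturation}(2) upgrades the local filtration $\J_{\G_S}^*$ to the global $\J_{\G_V}^*$. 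Covering the compact orbit space $[\G]$ by finitely many such slices and applying a Mayer--Vietoris argument as in Lemma \ref{lem:local} then reduces the comparison to the isotropy groups, where finite generation from (\ref{cond:gr3}) and Theorem \ref{thm:IG2} close the argument. I expect the careful bookkeeping of how $I_\G$-adic topologies on $\KK^\G$ restrict to $I_{\G_x^x}$-adic topologies on the slices — the groupoid version of the Lemma 3.4 of \cite{MR935523} step used in Theorem \ref{thm:IG2} — to be the delicate point.
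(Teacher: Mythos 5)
Your treatment of part (1) coincides with the paper's (the paper proves it exactly as Theorem \ref{thm:decomp}, via the adjunction and Theorem \ref{thm:homideal}). For part (2), however, your architecture is genuinely different from the paper's, and it contains a real gap. The paper never proves a groupoid analogue of Theorem \ref{thm:IG2}, nor does it build a global phantom tower for $\G$: instead it proves the completion isomorphisms themselves \emph{locally} --- on closed full subgroupoids $\G_i$ covering a Morita-equivalent groupoid $\tilde{\G}$, each of which (like the intersections $\G_i\cap\G_j$) admits a local equivalence from some $G\ltimes X$ with $G$ a compact Lie group and $X$ a compact $G$-CW-complex, so that statement (2) for $\G_i$ follows directly from Theorem \ref{thm:AS} together with Lemma \ref{lem:saturation} --- and then glues these \emph{conclusions} by Mayer--Vietoris sequences and the five lemma, the completed row being exact because $I$-adic completion is exact on finitely generated modules over the noetherian ring $R(\G)$ (Lemma \ref{lem:grcond}(3)), finishing with Morita invariance of $I_\G$ and of $(\mathcal{TC},\ebk{\mathcal{TI}}^\loc)$. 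Your plan instead globalizes the \emph{ingredients} (filtration comparison, pro-isomorphism, ABC convergence) and then runs the completion argument once, globally.

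The gap is that these ingredients do not "go through verbatim," because the trivial first step of the group-case comparison fails for groupoids. For a group, $I_G^H\,\KK^G(A,B)\subset\J_G^H(A,B)$ holds termwise, since $\Res_G^H$ kills $I_G^H$ by definition; this containment is used silently but essentially in Lemma \ref{lem:proisom} (to get $(I_G^\F)^r N=0$ from $(\J_G^\F)^r$-injectivity of $\tilde{B}_p'$ in the Artin--Rees step) and again in Lemma \ref{lem:ABC}. For a groupoid, $\Res_\G^{\G^0}$ maps $I_\G=\Ker\dim$ into the rank-zero part of the representable $\K$-theory of $\G^0$, which need not vanish: already for the trivial groupoid $\G^1=\G^0=S^2$ one has $\J_\G=0$ while $I_\G\cong\tilde{\K}^0(S^2)\neq 0$, so $I_\G\,\KK^\G\not\subset\J_\G$. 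The filtration equivalence you want can still be true (in this example because $\tilde{\K}^0(S^2)$ is nilpotent), but establishing it requires a genuinely new argument --- nilpotency of reduced representable $\K$-theory transported along slices, or a reduction through Lemma \ref{lem:saturation} --- and every step of Lemmas \ref{lem:proisom} and \ref{lem:ABC} that invoked the termwise containment must be reworked. A second, related gap: your global Milnor tower needs injective objects $B\otimes C_0(Y)$ with $\KK^\G_*(A,B\otimes C_0(Y))$ finitely generated over $R(\G)$, playing the role of Lemma \ref{lem:fingen}(3); but condition (\ref{cond:gr3}) only concerns restrictions to slices and does not produce such global objects. Manufacturing them from the representation in (\ref{cond:gr2}) (via its principal bundle and joins) would itself require a Mayer--Vietoris argument over the slices --- at which point you are effectively carrying out the paper's strategy of gluing local statements, rather than running the group-case machinery globally.
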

\begin{proof}
The assertion (1) can be shown in the same way as Theorem \ref{thm:decomp}.

To see (2), take slices $\{ (X_i, S_i, \varphi _i) \}_{i \in I}$ such that $\KK^{\G_{X_i}}_*(\Res _{\G}^{\G_{X_i}}A,\Res _{\G}^{\G_{X_i}}B)$ are finitely generated and $\bigcup \pi (X_i)=[\G]$. Consider the groupoid 
$$\tilde{\G}^0 :=\bigsqcup S_i, \ \tilde{\G}^1:=\{ (g,i,j) \in \G \times I \times I \mid s(g) \in \varphi _i(S_i),r(g) \in \varphi _j(S_j) \}$$
with $s(g,i,j)=s(g) \in S_i$, $r(g,i,j)=r(g) \in S_j$ and $(h,j,k) \circ (g,i,j)=(g\circ h,i,k)$. Then, $\tilde{\G}$ is Morita equivalent to $\G$ and we have the family of closed full subgroupoids $\{ \G_i :=\G |_{\pi ^{-1}(\pi (S_i))} \}_{i \in I}$ such that $\tilde{\G} = \bigcup \G_i$ and the pair $(A|_{\G_i^0},B|_{\G_i^0})$ of $\sigma$-$\G_i$-$\Cst$-algebras satisfies (2). 

Let $\mathcal{H}$ be a proper groupoid which admits a local equivalence $\varphi : G \ltimes X \to \mathcal{H}$ where $G$ is a compact Lie group and $X$ is a compact $G$-CW-complex (such as $\G_i$ or $\G_i \cap \G_j$). Then, by Lemma \ref{lem:saturation}, $I_\mathcal{H}$-adic topology and $I_G$-adic topology on $\KK^\mathcal{H}(A,B) \cong \KK^{G \ltimes X}(\varphi ^*A,\varphi ^*B)$ coincide. Moreover $\varphi ^*$ preserves $\mathcal{TC}$ and $\ebk{\mathcal{TI}}^\loc$. Hence, (2) holds for $\mathcal{H}$ by Theorem \ref{thm:AS}. 

By Lemma 3.4 of \cite{MR935523} and the proof of Lemma \ref{lem:grcond}, $I_\G$-adic and $I_{\G _i}$-adic topologies coincide on $\KK^{\G_i}(A|_{\G_i^0},B|_{\G_i^0})$. Moreover, $\Res _\G ^{\G_i}$ preserves $\mathcal{TC}$ and $\ebk{\mathcal{TI}}^\loc$. Finally we obtain (2) for $\tilde{\G}$ by using the Mayer-Vietoris exact sequence
\[
\xymatrix@C=1.5em{
\scriptstyle \cdots \ar[r] &\scriptstyle  \KK^{\G }(A,B)_{I_\G}^{\myhat} \ar[r] \ar[d] &  {\begin{array}{c} \scriptstyle \KK^{\G _1}(\Res_\G^{\G_1}A,\Res _\G^{\G_1}B)_{I_\G}^{\myhat}\\ \scriptstyle \oplus \\ \scriptstyle \KK^{\G _2}(\Res _\G^{\G_2}A,\Res _\G^{\G_2}B)_{I_\G}^{\myhat} \end{array}} \ar[r] \ar[d]&\scriptstyle   \KK^{\G _0}(\Res_\G^{\G_0}A,\Res _\G^{\G_0}B)_{I_\G}^{\myhat} \ar[r] \ar[d] & \scriptstyle \cdots\\
\scriptstyle \cdots \ar[r] & \scriptstyle  \KK^{\G }(A,\tilde{B}) \ar[r] & {\begin{array}{c} \scriptstyle  \KK^{\G _1}(\Res _\G^{\G_1}A,\Res_\G^{\G_1}\tilde{B}) \\ \scriptstyle   \oplus \\ \scriptstyle \KK^{\G _2}(\Res _\G^{\G_2},\Res_\G^{\G_2}\tilde{B}) \end{array}} \ar[r] &\scriptstyle  \KK^{\G _0}(\Res_\G^{\G_0}A,\Res_\G^{\G_0}\tilde{B})\ar[r] & \scriptstyle \cdots
}
\]
(for $\G =\G_1 \cup \G_2$, $\G_0:=\G_1 \cap \G_2$) and the five lemma recursively. Note that the first row is exact because the completion functor is exact when modules are finitely generated. Since the augmentation ideal $I_\G$ and the complementary pair $(\mathcal{TC},\ebk{\mathcal{TI}}^\loc)$ are preserved under Morita equivalence, we obtain the consequence.
\end{proof}

Second generalization is the Atiyah-Segal completion theorem for proper actions. Let $G$ be one of 
\begin{itemize}
\item a countable discrete group such that and all finite subgroups of $G$ have order at most $N$ for some $N \in \nat$ and has a model of the universal proper $G$-space $E_\mathcal{C} G$ which is $G$-compact and finite covering dimension or
\item a cocompact subgroup of an almost connected second countable group
\end{itemize}
and let $\F$ be a family of $G$ consisting of compact subgroups. Set $\G:=G \ltimes E_\mathcal{C}G$. According to Section 7 of \cite{MR2193334}, the category $\sigma \Kas ^\G$ is identified with the subcategory $\ebk{\CI}_\loc$ of $\sigma \Kas ^G$ by the natural isomoprhism 
$$p_{E_\mathcal{C}G}^*:\KK^G(A,B) \xra{\cong} \KK^{G \ltimes E_\mathcal{C}G}(A \otimes C(E_\mathcal{C}G),B \otimes E_\mathcal{C}G)$$
since $G$ has a Dirac element coming from a proper $\sigma$-$G$-$\Cst$-algebra when $G$ is discrete (Theorem 2.1 of \cite{MR2105483}) or a closed subgroup of an almost connected second countable group $H$ (Theorem 4.8 of \cite{MR918241}).

\begin{thm}\label{thm:proper}
Let $G$ and $\F$ be as above. Then, the following holds:
\begin{enumerate}
\item A pair $(\FC,\ebk{\FI}^{\loc})$ is complementary in $\ebk{\CI}_\loc \subset \sigma \Kas ^G$.
\item For any pair of proper $\sigma$-$G$-$\Cst$-algebras $A$, $B$ such that $\KK^H_*(A,B)$ are finitely generated for any compact subgroup $H$ of $G$, there are isomorphisms of $R(\G)$-modules
$$\KK^G(A,B)_{I_\G^\F}^{\myhat} \cong \KK^G (A,\tilde{B}) \cong \RKK^G(E_\F \G; A,B) \cong \sigma \Kas ^G /\mathcal{FC} (A,B).$$
\end{enumerate}
\end{thm}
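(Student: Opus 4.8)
The plan is to reduce the statement to the groupoid case of Theorem \ref{thm:groupoid}, applied to the transformation groupoid $\G=G\ltimes E_\mathcal{C}G$, by transporting all the data through the identification $\sigma\Kas^\G\cong\ebk{\CI}_\loc$ furnished by $p_{E_\mathcal{C}G}^*$. First I would verify that $\G$ meets the hypotheses of that theorem. Its orbit space $[\G]=E_\mathcal{C}G/G$ is compact because $E_\mathcal{C}G$ is $G$-compact. Condition (\ref{cond:gr1}) holds since, after arranging $E_\mathcal{C}G$ to be a proper $G$-CW-complex, the slice theorem presents $\G$ locally by action groupoids $G_x\ltimes S_x$ with $G_x$ compact and $S_x$ equivariantly contractible to a fixed point, which is exactly the situation covered by the Example following (\ref{cond:gr2}). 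Condition (\ref{cond:gr2}) is precisely the content of the Example following Lemma \ref{lem:grcond}: the two admissible classes of $G$ in the hypothesis are exactly the two cases there guaranteeing that $G\ltimes E_\mathcal{C}G$ carries a finite dimensional representation faithful on each isotropy group. Finally, finite generation of $\KK^H_*(A,B)$ for compact $H$ yields condition (\ref{cond:gr3}) for the pair $(A\otimes C(E_\mathcal{C}G),B\otimes C(E_\mathcal{C}G))$, since over a slice $C(E_\mathcal{C}G)$ restricts to $C(S_x)$ and $\KK^{G_x\ltimes S_x}_*\cong\KK^{G_x}_*(\Res_G^{G_x}A,\Res_G^{G_x}B)$ is finitely generated by assumption.

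Next I would match the family data. Under $p_{E_\mathcal{C}G}^*$ the isotropy groups of $\G$ run over all compact subgroups of $G$, so a family $\F$ of compact subgroups of $G$ determines family-relative homological ideals on $\G$ by restriction to the isotropy subgroupoids with stabilizer in $\F$; the subcategories $\FC$ and $\ebk{\FI}^\loc$ and the ideal $I_\G^\F$ then correspond to their groupoid counterparts. The crucial point, and the step I expect to be the main obstacle, is that Theorem \ref{thm:groupoid} is phrased for the trivial family on the groupoid, so I must relativize its proof to $\F$. Here the slice presentation is decisive: near $x$ the groupoid is modelled on $G_x\ltimes S_x$ with $S_x\simeq_{G_x}\{x_0\}$, so $\F$-contractibility and $\F$-inducedness become the corresponding notions for the \emph{compact} group $G_x$ relative to $\{L\in\F\mid L\le G_x\}$, where the family version of the Atiyah-Segal theorem, Theorem \ref{thm:AS}, applies directly. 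Lemma \ref{lem:saturation} propagates contractibility from slices to their saturations and shows the slice filtrations are equivalent to $\J_\G^\F$, while Lemma \ref{lem:grcond} guarantees that $R(\G)$ is noetherian, so the relevant $I_\G^\F$-adic completions are exact on finitely generated modules.

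With these ingredients part (1) follows as in Theorem \ref{thm:groupoid}(1) (equivalently Theorem \ref{thm:decomp}): enough $\J_\G^\F$-injectives come from the right adjoint of restriction, and every injective lies in $\ebk{\FI}^\loc$, so $(\FC,\ebk{\FI}^\loc)$ is complementary in $\ebk{\CI}_\loc$. For part (2) I would run the Mayer-Vietoris patching of the proof of Theorem \ref{thm:groupoid}(2): choose finitely many slices covering $[\G]$ with finitely generated local $\KK$, pass to the Morita equivalent groupoid $\tilde{\G}=\bigcup\G_i$ assembled from them, establish the four-term isomorphism on each $\G_i$ and each intersection $\G_i\cap\G_j$ via Theorem \ref{thm:AS} together with the coincidence of the $I_\G^\F$-adic and $I_{\G_i}^\F$-adic topologies (Lemma 3.4 of \cite{MR935523} and Lemma \ref{lem:grcond}), and glue by the five lemma, using exactness of completion on finitely generated $R(\G)$-modules. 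Transporting the resulting isomorphisms back through $p_{E_\mathcal{C}G}^*$ yields the asserted chain $\KK^G(A,B)_{I_\G^\F}^{\myhat}\cong\KK^G(A,\tilde{B})\cong\RKK^G(E_\F\G;A,B)\cong\sigma\Kas^G/\FC(A,B)$.
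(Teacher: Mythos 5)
Your overall strategy---check that $\G=G\ltimes E_{\mathcal{C}}G$ has compact orbit space and satisfies (\ref{cond:gr1})--(\ref{cond:gr3}), then rerun the proof of Theorem \ref{thm:groupoid} relativized to the family $\F$ using slices, Theorem \ref{thm:AS}, Lemma \ref{lem:saturation}, Lemma \ref{lem:grcond} and Mayer--Vietoris patching---is exactly the paper's argument, which it compresses into ``the proof is given in the same way as Theorem \ref{thm:groupoid}.'' But the paper's proof contains one further ingredient, and it is precisely the step your proposal mishandles: the identity $\J_\G^H=\J_\G^{H\ltimes X}$ for every $H\in\F$ and \emph{every} $H$-subspace $X\subseteq E_{\mathcal{C}}G$, compact or not, which holds because the composition $\sigma\Kas^{H\ltimes E_{\mathcal{C}}G}\to\sigma\Kas^{H\ltimes X}\to\sigma\Kas^{H}$ is the identity; the point is that for compact $H$ the space $E_{\mathcal{C}}G$ is itself a universal proper $H$-space, so the identification $p^*_{E_{\mathcal{C}}G}$, applied now to $H$ rather than $G$, makes the forgetful functor $\sigma\Kas^{H\ltimes E_{\mathcal{C}}G}\to\sigma\Kas^{H}$ an equivalence under which restriction to $H\ltimes X$ followed by forgetting becomes the identity.

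This identity is what ties the statement of the theorem---where $\FC$, $\FI$ and $I_\G^\F$ are defined through the functors $\Res_G^H$ ($H\in\F$), which forget the $C_0(E_{\mathcal{C}}G)$-structure entirely---to the groupoid machinery of Section \ref{section:5}, which only sees restrictions to subgroupoids of $\G$. Your substitute is slice-local: from $S_x\simeq_{G_x}\{x_0\}$ you infer $\KK^{G_x\ltimes S_x}_*(A|_{S_x},B|_{S_x})\cong\KK^{G_x}_*(\Res_G^{G_x}A,\Res_G^{G_x}B)$ and that $\F$-contractibility and $\F$-inducedness ``become'' the corresponding notions for the compact groups $G_x$. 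That inference fails: contractibility of the slice relates $A|_{S_x}$ at best to its fiber $A_{x_0}$, whereas $\Res_G^{G_x}A$ is the whole algebra $A$ over all of $E_{\mathcal{C}}G$ with its bundle structure forgotten; these are different functors, and a $C_0(X)$-algebra over a contractible space need not even be $\KK$-equivalent to its fiber, so no purely local contractibility statement can identify them. What is needed is the \emph{global} $G_x$-equivariant contractibility of $E_{\mathcal{C}}G$ (its universal property as a proper $G_x$-space), i.e.\ exactly the paper's note. Without it, the complementary pair you build in (1) (whose enough-injectives step secretly uses the groupoid adjunction, since $\Ind_H^G\dashv$-type adjunctions from Proposition \ref{prp:indres} are unavailable for non-cocompact $H\leq G$) and the filtrations and completions you compute in (2) are attached to the subgroupoid ideals $\J_\G^{H\ltimes X}$ rather than to the ideals $\J_\G^{H}$ and $I_\G^\F$ appearing in the statement, so neither part follows as stated. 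Once this identity is inserted, the rest of your argument goes through and coincides with the paper's.
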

\begin{proof}
The proof is given in the same way as Theorem \ref{thm:groupoid}. Note that $\J _\G^H = \J _\G ^{H \ltimes X}$ for any $H$-subspace $X$ of $E_\mathcal{C}G$ (even if $X$ is not compact) since the composition
$$\sigma \Kas^{H \ltimes E_\mathcal{C}G} \to \sigma \Kas^{H \ltimes X} \to \sigma \Kas ^{H}$$
is identity.
\end{proof}

\section{The Baum-Connes conjecture for group extensions}\label{section:6}
In this section we apply Corollary \ref{cor:cyc} for the study of the complementary pair $(\ebk{\CI}_\loc ,\CC)$ of the Kasparov category $\sigma \Kas ^G$ when $G$ is a Lie group. As a consequence, we refine the theory of Chabert, Echterhoff and Oyono-Oyono \citelist{\cite{MR1817505},\cite{MR1857079},\cite{MR1836047}} on permanence property of the Baum-Connes conjecture under extensions of groups. 

Let $G$ be a second countable locally compact group such that any compact subgroup of $G$ is a Lie group. We bear the case that $G$ is a real Lie group in mind. We write $\mathcal{C}$ and $\CZ$ for the family of compact and compact cyclic subgroups of $G$ respectively. 
\begin{cor}\label{cor:BCcyc}
We have $\CC = \CZC$ and $\ebk{\CI}_\loc =\ebk{\CZI}_\loc$. 
\end{cor}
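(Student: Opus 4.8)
The plan is to prove the two equalities separately, in each case reducing to a compact Lie group and invoking Corollary \ref{cor:cyc}. For $\CC = \CZC$ the inclusion $\CC \subseteq \CZC$ is immediate from $\CZ \subseteq \mathcal{C}$. For the converse, let $A \in \CZC$ and let $K \leq G$ be any compact subgroup, so that $K$ is a compact Lie group by hypothesis. Every cyclic subgroup $L \leq K$ is a compact cyclic subgroup of $G$, hence $\Res_G^L A = \Res_K^L \Res_G^K A$ is $\KK^L$-contractible; thus $\Res_G^K A$ belongs to the class $\Z\mathcal{C}$ of $\sigma \Kas^K$. Since Corollary \ref{cor:cyc}(1) applied to $K$ gives $\Z\mathcal{C} = 0$ in $\sigma \Kas^K$, the algebra $\Res_G^K A$ is $\KK^K$-contractible. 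As $K$ is arbitrary, $A \in \CC$.

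For $\ebk{\CI}_\loc = \ebk{\CZI}_\loc$ the inclusion $\ebk{\CZI}_\loc \subseteq \ebk{\CI}_\loc$ again follows from $\CZ \subseteq \mathcal{C}$, so the task is to place each generator $\Ind_H^G D$ of $\ebk{\CI}_\loc$ (with $H \leq G$ compact and $D$ a $\sigma$-$H$-$\Cst$-algebra) into $\ebk{\CZI}_\loc$. I would first argue inside $\sigma \Kas^H$ that $D$ lies in the thick triangulated subcategory generated by the $\Z$-induced objects, where $\Z$ now denotes the family of cyclic subgroups of the compact Lie group $H$. Granting this, I would apply the triangulated induction functor $\Ind_H^G$: by induction in stages $\Ind_H^G(\Ind_L^H B) \cong \Ind_L^G B$ is $\CZI$ whenever $L \leq H$ is cyclic (such $L$ being compact cyclic in $G$), so $\Ind_H^G$ carries the thick subcategory generated by the $\Z$-induced objects into the thick subcategory generated by $\CZI$, which sits inside $\ebk{\CZI}_\loc$. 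Hence $\Ind_H^G D \in \ebk{\CZI}_\loc$, completing the reverse inclusion.

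The crux is the claim that $D$ lies in the thick subcategory generated by the $\Z$-induced objects of $\sigma \Kas^H$, and the main obstacle to keep in view is that Theorem \ref{thm:decomp} only yields the colocalizing decomposition $(\Z\mathcal{C}, \ebk{\Z\mathcal{I}}^\loc)$, whose right-hand part is closed under direct products, whereas $\ebk{\CZI}_\loc$ is closed under direct sums. The resolution is the nilpotence in Corollary \ref{cor:cyc}(1): choosing $n$ with $(\J_H^\Z)^n = 0$ and forming the Milnor phantom tower and castle of $D$ (Definition \ref{def:Milnor}), whose injective terms $I_p = \Ind_{H_p}^H \Res_H^{H_p} N_{p-1}$ are $\Z$-induced, the vanishing $(\J_H^\Z)^n = 0$ forces $\iota_0^n = 0$ (take the identity of $N_n$ in $\J^n(N_n, D) = \Im(\iota_0^n)_* = 0$). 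Consequently the castle triangle $N_n \xra{0} D \to \tilde{D}_n \to \Sigma N_n$ splits, so $D$ is a direct summand of $\tilde{D}_n$; and $\tilde{D}_n$ is assembled from the finitely many $\Z$-induced objects $I_1, \dots, I_n$ by finitely many cone constructions, hence lies in the thick subcategory generated by the $\Z$-induced objects, as does its summand $D$. Because only finitely many operations intervene, this thick subcategory is contained in both $\ebk{\Z\mathcal{I}}^\loc$ and $\ebk{\Z\mathcal{I}}_\loc$, which is exactly what bridges the product-versus-sum gap.
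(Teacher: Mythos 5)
Your proof is correct, and its first half coincides with the paper's: both reduce $\CC = \CZC$ to Corollary \ref{cor:cyc} by restricting to an arbitrary compact subgroup, which is a Lie group by hypothesis, and noting that its cyclic subgroups are compact cyclic subgroups of $G$. Where you genuinely diverge is the second equality. The paper never touches the generators of $\ebk{\CI}_\loc$ at all: it uses that both $(\ebk{\CI}_\loc, \CC)$ and $(\ebk{\CZI}_\loc, \CZC)$ are complementary pairs (the Meyer--Nest machinery applied to the families $\mathcal{C}$ and $\CZ$), so that each left half is the left orthogonal of its right half; once $\CC = \CZC$ is known, $\ebk{\CI}_\loc = \ebk{\CZI}_\loc$ is purely formal --- that is why the paper can say ``it suffices to show $\CC = \CZC$.'' Your route instead shows directly that every generator $\Ind_H^G D$ of $\ebk{\CI}_\loc$ lies in $\ebk{\CZI}_\loc$: the nilpotence $(\J_H^\Z)^n = 0$ of Corollary \ref{cor:cyc}(1) forces $\iota_0^n = 0$ in the Milnor phantom tower of $D$ over the compact Lie group $H$, the castle triangle then splits so that $D$ is a direct summand of $\tilde{D}_n$, which is a finite iterated cone on the $\Z$-induced objects $I_0,\dots,I_{n-1}$, and induction in stages carries all of this into the thick subcategory generated by $\CZI$. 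This is longer, but it is self-contained (it does not require knowing that $(\ebk{\CZI}_\loc, \CZC)$ is itself complementary) and it proves something strictly stronger: each compactly induced object lies already in the \emph{thick} subcategory generated by $\CZI$, with no localizing closure needed --- which is exactly how you correctly bridge the mismatch you identify between the colocalizing decomposition of Theorem \ref{thm:decomp} and the localizing subcategory in the statement. One bookkeeping point worth flagging: Corollary \ref{cor:cyc}(1) asserts vanishing for one fixed filtration associated to $\J_H^\Z$, while your Milnor tower determines another; since any two filtrations associated to the same family are equivalent, the vanishing transfers at the cost of enlarging $n$, which harms nothing.
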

\begin{proof}
Since $\CZ \subset \mathcal{C}$, we have $\CZI \subset \CI$ and $\CC \subset \CZC$. Hence it suffices to show $\CC=\CZC$, which immediately follows from Corollary \ref{cor:cyc} (2).
\end{proof}

\begin{cor}[cf.\ Theorem 1.1 of \cite{MR2083579}]
The canonical map $f:E_{\mathcal{CZ}}G \to E_{\mathcal{C}}G$ induces the $\KK^G$-equivalence $f^*:C(E_{\mathcal{CZ}}G)\to C(E_{\mathcal{C}}G)$.
\end{cor}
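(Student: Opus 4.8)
The plan is to prove the statement by showing that the mapping cone of the morphism $f^{*}$ induced by $f$ vanishes in $\sigma\Kas^{G}$, since a morphism in a triangulated category is invertible if and only if its cone is zero. The strategy is to locate this cone, call it $C$, simultaneously inside \emph{both} members of the complementary pair $(\ebk{\CI}_\loc,\CC)$ of $\sigma\Kas^{G}$ and then to apply the defining orthogonality of that pair. Thus the whole argument reduces to two membership claims for $C$, after which the conclusion is immediate.

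First I would place $C$ in the localizing half. Recall that $E_{\F}G$ is a $G$-CW-complex whose cells have the form $G/H\times D^{n}$ with $H\in\F$, so its skeletal filtration exhibits $C(E_{\F}G)$ as a homotopy colimit of suspensions of the $\F$-induced algebras $\Ind_{H}^{G}\comp$; hence $C(E_{\mathcal{C}}G)\in\ebk{\CI}_\loc$ and $C(E_{\CZ}G)\in\ebk{\CZI}_\loc$. By Corollary \ref{cor:BCcyc} we have $\ebk{\CI}_\loc=\ebk{\CZI}_\loc$, so both endpoints of $f^{*}$ lie in the one triangulated subcategory $\ebk{\CI}_\loc$, and therefore so does the cone $C$ of the morphism between them.

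Next I would show $C\in\CC$ by restricting to small subgroups. For every $H\in\CZ$ the restricted spaces $\Res_{G}^{H}E_{\CZ}G$ and $\Res_{G}^{H}E_{\mathcal{C}}G$ are both $H$-equivariantly contractible: since $\CZ$ is closed under subconjugacy, every subgroup of such an $H$ again lies in $\CZ$ (and is compact), so each restricted space is a universal space for the family of \emph{all} subgroups of $H$, i.e. an $H$-contractible point. Consequently $\Res_{G}^{H}f$ is an $H$-homotopy equivalence, $\Res_{G}^{H}C\sim_{\KK^{H}}0$, and therefore $C\in\CZC$. Invoking Corollary \ref{cor:BCcyc} once more gives $\CZC=\CC$, so $C\in\CC$. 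Combining the two claims, $C$ belongs to both $\ebk{\CI}_\loc$ and $\CC$; the complementary-pair relation $\sigma\Kas^{G}(P,N)=0$ for $P\in\ebk{\CI}_\loc$ and $N\in\CC$ then forces $\id_{C}=0$, whence $C=0$ and $f^{*}$ is a $\KK^{G}$-equivalence.

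The main obstacle I anticipate is precisely the $\mathcal{C}$-contractibility of $C$. Restriction to compact subgroups yields contractibility only over the members of $\CZ$: for a non-cyclic compact $K$ the space $\Res_{G}^{K}E_{\CZ}G$ is a genuine model for the universal space of the cyclic subgroups of $K$ and is \emph{not} $K$-contractible, so $\Res_{G}^{K}f$ need not be a $K$-equivalence. The passage from $\CZC$ to $\CC$ cannot be seen directly and is exactly where Corollary \ref{cor:BCcyc} (and, behind it, Corollary \ref{cor:cyc}) is indispensable; this identification of the two complementary pairs is the conceptual heart of the proof.
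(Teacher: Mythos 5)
Your proposal is correct and is essentially the paper's own argument: the paper likewise observes that $f$ is a $T$-equivariant homotopy equivalence for every $T\in\CZ$ (so that $f^*$ becomes invertible modulo $\CZC$), notes that both $C(E_{\CZ}G)$ and $C(E_{\mathcal{C}}G)$ lie in $\ebk{\CI}_\loc=\ebk{\CZI}_\loc$, and then concludes via Corollary \ref{cor:BCcyc} and the complementarity of $(\ebk{\CI}_\loc,\CC)$. Your phrasing through the vanishing of the mapping cone, rather than through the equivalence $\ebk{\CI}_\loc\simeq\sigma\Kas^G/\CC$, is just a reformulation of the same argument, and your identification of Corollary \ref{cor:BCcyc} as the indispensable ingredient matches the paper exactly.
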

Note that the topological $\K$-homology group $\K ^{\mathrm{top}}_*(G;A)$ is isomorphic to the $\KK$-group $\KK^G(C(E_{\mathcal{C}}G),A)$ of $\sigma$-$\Cst$-algebras for any $G$-$\Cst$-algebra $A$. 
\begin{proof}
Since $f$ is a $T$-equivariant homotopy equivalence between $E_\mathcal{C}G$ and $E_\CZ G$ for any $T \in \CZ$, $f^*$ is an equivalence in $\sigma \Kas^G /\CZC$. The conclusion follows from Corollary \ref{cor:BCcyc} because $C(E_\CZ G)$ and $C(E_\mathcal{C}G)$ are in $\ebk{\CI}_\loc =\ebk{\CZI}_\loc$. 
\end{proof}

Next we review the Baum-Connes conjecture for extensions of groups. Let $1 \to N \to G \to Q \to 1$ be an extension of second countable locally compact groups. We assume that any compact subgroup of $Q$ is a Lie group. As in Subsection 5.2 of  \cite{MR2313071}, we say that a subgroup $H$ of $G$ is \textit{$N$-compact} if $\pi (H)$ is compact in $Q$. We write $\mathcal{C}_N$ for the family of $N$-compact subgroups of $G$. Then, we have the complementary pair $(\left< \CNI \right>_{\loc}, \CNC)$. It is checked as following. First, in the same way as Lemma 3.3 of \cite{MR2193334}, for a large compact subgroup $H$ of $Q$ we have
$$\KK^G(\Ind _{\tilde{H}}^G A,B) \cong \KK^{\tilde{H}}(\Res _{\tilde{U}_H}^{\tilde{H}} \Ind _{\tilde{H}}^{\tilde{U}_H}A,\Res _G^{\tilde{H}}B)$$
where $\tilde{H}:=\pi ^{-1}(H)$ for any $H \leq Q$ and $U_H$ is as Section 3 of \cite{MR2193334}. Hence $\KK^G(Q,M)=0$ for any $Q \in \CNI$ and $M \in \CNC$. Let $S\mathsf{M} \to \mathsf{Q} \to \comp \to \mathsf{M}$ be the approximation exact triangle of $\comp$ in $\sigma \Kas ^Q$ with respect to $(\ebk{\CI}_\loc,\CC)$. Since the functor $\pi^*:\sigma \Kas ^Q \to \sigma \Kas ^G$ maps $\CI$ to $\CNI$ and $\CC$ to $\CNC$ respectively, $S\pi^*\mathsf{M} \to \pi^*\mathsf{Q} \to \comp \to \pi^*\mathsf{M}$ gives the approximation of $\comp$ in $\sigma \Kas ^G$ with respect to $(\ebk{\CNI}_\loc,\CNC)$. Hereafter, for simplicity of notations we omit $\pi ^*$ for $\sigma$-$Q$-$\Cst$-algebras which are regarded as $\sigma$-$G$-$\Cst$-algebras.

Since $\CI \subset \CNI$ and $\CNC \subset \CC$, we obtain the diagram of semi-orthogonal decompositions
\maa{
\xymatrix{
\ebk{\CI}_\loc \ar@{=}[r] \ar[d]&\ebk{\CI}_\loc \ar[r] \ar[d] &0 \ar[d] \\
\ebk{\CNI}_\loc \ar[r] \ar[d] & \Kas ^G \ar[r] \ar[d] & \CNC \ar@{=}[d] \\
\ebk{\CNI}_\loc \cap \CC \ar[r] & \CC \ar[r] & \CNC,}
\xymatrix{
\mathsf{P} \ar@{=}[r] \ar[d]^{\mathsf{D}_G^Q}& \mathsf{P} \ar[r] \ar[d]^{\mathsf{D}_G} &0 \ar[d] \\
\mathsf{Q} \ar[r]^{\mathsf{D}_Q} \ar[d] &\comp \ar[r] \ar[d] & \mathsf{M} \ar@{=}[d] \\
\mathsf{Q} \otimes \mathsf{N} \ar[r] & \mathsf{N} \ar[r] & \mathsf{M}.} \label{form:PQ}
}

For a $\sigma$-$G$-$\Cst$-algebra $A$, the (full or reduced) crossed product $N \ltimes A$ is a twisted $\sigma$-$Q$-$\Cst$-algebra (Definition 2.1 of \cite{MR1002543}). By the Packer-Raeburn stabilization trick (Theorem 1 of \cite{MR1277761}), it is Morita equivalent to the untwisted $Q$-$\Cst$-algebra 
$$N \ltimes ^{\mathrm{PR}} A:=C_0(Q,N \ltimes A ) \rtimes _{\tilde{\alpha} , \tilde{\tau}} Q$$
where $\tilde{\alpha}$ and $\tilde{\tau}$ are induced from the canonical $G$-action on $C_0(Q,N \ltimes A)$. The \textit{Packer-Raeburn crossed product} $N \ltimes ^{\mathrm{PR}}\blank$ is a functor from $\GCsep{G}$ to $\GCsep{G/N}$, which induces the \textit{partial descent functor}  (Section 4 of \cite{MR1857079})
$$j_G^Q: \sigma \Kas^G \to \sigma \Kas ^{Q}$$
by universality of $\sigma \Kas ^G$ (Theorem \ref{thm:univ}). 
\begin{lem}
The functor $j_G^Q$ maps $\left< \CNI \right>_\loc$ to $\left < \mathcal{CI} \right >_\loc$ and $\CNC$ to $\mathcal{CC}$. 
\end{lem}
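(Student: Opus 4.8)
The plan is to exploit three structural facts: that $j_G^Q$ is a triangulated functor commuting with countable direct sums (it is induced by the $\Cst$-algebra functor $N \ltimes^{\mathrm{PR}} \blank$, and both the crossed product and the Packer--Raeburn stabilization preserve inductive limits, hence direct sums), and two compatibility isomorphisms relating $j_G^Q$ to the restriction and induction functors. Since $\ebk{\CI}_\loc$ is localizing, the full subcategory $\{ A \in \sigma\Kas^G \mid j_G^Q A \in \ebk{\CI}_\loc \}$ is again localizing, so for the first assertion it suffices to send the generators of $\ebk{\CNI}_\loc$ --- the $\mathcal{C}_N$-induced objects --- into $\ebk{\CI}_\loc$. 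For an $N$-compact subgroup $L \leq G$ with compact image $F := \pi(L) \leq Q$, write $j_L^F := (L \cap N) \ltimes^{\mathrm{PR}} \blank$ for the partial descent attached to the subextension $1 \to L \cap N \to L \to F \to 1$; in the saturated case $L = \tilde F := \pi^{-1}(F)$ one has $L \cap N = N$. The two compatibilities I would use are
\begin{equation*}
\Res_Q^F \circ j_G^Q \cong j_{\tilde F}^F \circ \Res_G^{\tilde F}, \qquad j_G^Q \circ \Ind_L^G \cong \Ind_F^Q \circ j_L^F .
\end{equation*}

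For the class $\CNC$: I would first note that $A \in \CNC$ if and only if $\Res_G^{\tilde F} A \cong 0$ for every compact $F \leq Q$, since any $N$-compact subgroup $H$ is contained in $\pi^{-1}(\pi(H))$ and $\Res_G^H$ factors through $\Res_G^{\pi^{-1}(\pi(H))}$. Given such an $A$, the object $\Res_G^{\tilde F} A$ is $\KK^{\tilde F}$-contractible, i.e.\ it is the zero object of $\sigma\Kas^{\tilde F}$, so the additive functor $j_{\tilde F}^F$ carries it to the zero object of $\sigma\Kas^F$. By the first compatibility, $\Res_Q^F(j_G^Q A) \cong j_{\tilde F}^F(\Res_G^{\tilde F} A) \cong 0$ for every compact $F \leq Q$, which is exactly the statement that $j_G^Q A \in \CC$.

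For the class $\CNI$: a generator is $\Ind_L^G A_0$ with $L$ an $N$-compact subgroup and $F = \pi(L)$ compact. By the second compatibility, $j_G^Q(\Ind_L^G A_0) \cong \Ind_F^Q(j_L^F A_0)$, and since $F$ is a compact subgroup of $Q$ the right-hand side is induced from $F$, hence lies in $\CI \subseteq \ebk{\CI}_\loc$. Together with the reduction to generators in the first paragraph, this gives $j_G^Q(\ebk{\CNI}_\loc) \subseteq \ebk{\CI}_\loc$.

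The main obstacle is establishing the two compatibility isomorphisms. The restriction compatibility is the assertion that forming $N \ltimes^{\mathrm{PR}} \blank$ commutes with restricting the $Q$-action along $F \hookrightarrow Q$, matching it with the descent for the subextension over $\tilde F$; this is a direct but bookkeeping-heavy computation with the defining crossed products $C_0(Q, N \ltimes A) \rtimes Q$. The induction compatibility is the genuinely analytic point: it is an imprimitivity-type identification of the descent of an induced algebra with an induced descent, where Green's imprimitivity theorem and the Packer--Raeburn stabilization enter. This is precisely the compatibility of the partial descent functor with induction exploited by Chabert, Echterhoff and Oyono-Oyono in \cite{MR1857079} and \cite{MR1836047}, and I would either invoke their results directly or reprove the isomorphism on the level of $\Cst$-algebras before passing to $\sigma\Kas^G$ by universality.
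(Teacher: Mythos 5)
Your proof is correct, and its contractibility half is essentially the paper's own argument: the paper also verifies $j_G^Q(\CNC) \subset \CC$ by identifying $\Res_Q^F(N \ltimes^{\mathrm{PR}} A)$ with the partial descent of $\Res_G^{\pi^{-1}(F)}A$, which is precisely your restriction compatibility (stated there with a slight abuse, since the two Packer--Raeburn stabilizations agree only up to equivariant Morita equivalence --- harmless, as you note, because everything is happening in $\sigma\Kas^Q$). Where you genuinely diverge is the induced-objects half. The paper never invokes Green-type imprimitivity: it observes that $N \ltimes^{\mathrm{PR}} \Ind_H^G A_0$ carries a canonical structure of a $Q \ltimes \bigl((Q \times H\backslash G)/G\bigr)$-$\Cst$-algebra, that this $Q$-space is just $Q/\pi(H)$ and hence proper, and then concludes membership in $\ebk{\CI}_\loc$ from the general fact that proper $Q$-algebras lie in that localizing subcategory. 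Your route instead establishes the sharper identification $j_G^Q(\Ind_L^G A_0) \cong \Ind_F^Q(j_L^F A_0)$, exhibiting the descent as literally an object of $\CI$ rather than merely of its localizing hull; this is exactly where the Chabert--Echterhoff twisted imprimitivity results must be imported, and your flagging of this as the one genuinely analytic ingredient is accurate. The trade-off is clear: the paper's properness argument is softer and shorter, needing only the easy $C_0(Q/\pi(H))$-structure on the descent plus a known theorem of Meyer--Nest, while yours costs heavier crossed-product machinery but yields a more explicit conclusion and makes the adjunction-style bookkeeping transparent. Both proofs share the same reduction to generators --- implicit in the paper's ``consequently,'' explicit in your first paragraph --- resting on $j_G^Q$ being triangulated and compatible with countable direct sums.
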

\begin{proof}
Let $H$ be a $N$-compact subgroup of $G$ and let $A$ be a $\sigma$-$H$-$\Cst$-algebra. Then, $N \ltimes ^{\mathrm{PR}}\Ind _H^GA$ admits a canonical $\sigma$-$Q \ltimes ((Q \times H\backslash G)/G)$-$\Cst$-algebra structure. Since the $Q$-action on $(Q \times H\backslash G)/G$ is proper, $N \ltimes ^{\mathrm{PR}}\Ind _H^GA$ is in $\ebk{\CI}_\loc$. Consequently we obtain $j_G^Q (\left< \CNI \right>_\loc) \subset \left < \mathcal{CI} \right >_\loc$.

Let $A$ be a $\mathcal{C}_N$-contractible $\sigma$-$\Cst$-algebra. Then, for any compact subgroup $H$ of $Q$, $\Res _Q^H (N \ltimes^{\mathrm{PR}} A)=N \ltimes \Res _G^{\pi^{-1}(K)} A$ is $\KK ^H$-contractible. Hence we obtain $j_G^Q (\CNC) \subset \mathcal{CC}$. 
\end{proof}

Consider the partial assembly map
$$\mu _{G,A}^{Q}: \K^{\mathrm{top}}_*(G; A) \to \K ^{\mathrm{top} }_*(Q;N \ltimes A)$$
constructed in Definition 5.14 of \cite{MR1836047}. Then, in the same way as Theorem 5.2 of \cite{MR2104446}, we have the commutative diagram
\[ 
\xymatrix{
\K^{\mathrm{top}}_*(G;\mathsf{P} \otimes A) \ar[d]^\cong \ar[r]^\cong & \K ^{\mathrm{top} }_*(G;\mathsf{Q} \otimes A) \ar[r]^\cong \ar[d]& \K^{\mathrm{top}}_*(G;A)\ar[d]^{\mu _{Q,A}^Q} \\
\K^{\mathrm{top}}_*(Q;N \ltimes^{\mathrm{PR}} (\mathsf{P} \otimes A)) \ar[r] \ar[d]^\cong & \K^{\mathrm{top}}_*(Q;N \ltimes^{\mathrm{PR}} (\mathsf{Q} \otimes A)) \ar[r]^\cong \ar[d]^\cong& \K^{\mathrm{top}}_*(Q;N \ltimes^{\mathrm{PR}} A)\ar[d]^{\mu _{Q, N \ltimes ^{\mathrm{PR}}A}} \\
\K _*(G \ltimes (\mathsf{P} \otimes A)) \ar[r]^{j_G(\mathsf{D}_G^Q)} & \K _*(G \ltimes (\mathsf{Q} \otimes A)) \ar[r]^{j_G(\mathsf{D}_Q)} & \K_*(G \ltimes A)
}
\]
and hence the composition of partial assembly maps 
$$\mu _{G,A}=\mu _{Q,N \ltimes ^{\mathrm{PR}}A} \circ \mu_{G,A}^Q : \K^{\mathrm{top}}_*(G;A) \to \K^{\mathrm{top}}_*(Q;N \ltimes ^{\mathrm{PR}}A) \to \K _*(G \ltimes A)$$
is isomorphic to the canonical map $\K _*(G \ltimes (\mathsf{P} \otimes A)) \to \K _*(G \ltimes (\mathsf{Q} \otimes A)) \to \K_*(G \ltimes A)$. In other words, the partial assembly map $\mu _{G,A}^Q$ is isomorphic to the assembly map $\mu _{G, \mathsf{Q}\otimes A}$ for $\mathsf{Q} \otimes A$.

We say that a separable $\sigma$-$G$-$\Cst$-algebra $A$ satisfies the (resp.\ strong) Baum-Connes conjecture (BCC) if $j_G(\mathsf{D}_G)$ induces the isomorphism of $\K$-groups (resp.\ the $\KK$-equivalence). 
\begin{thm}\label{thm:BC}
Let $1 \to N \to G \to Q \to 1$ be an extension of second countable groups such that all compact subgroups of $Q$ are Lie groups and let $A$ be a separable $\sigma$-$G$-$\Cst$-algebra. Then the following holds.
\begin{enumerate}
\item If $\pi^{-1}(H)$ satisfies the (resp.\ strong) BCC for $A$ for any $H \in \CZ$, then $G$ satisfies the (resp.\ strong) BCC for $A$ if and only if $Q$ satisfies the (resp.\ strong) BCC for $N \ltimes ^{\mathrm{PR}}_rA$.
\item If $\pi^{-1}(H)$ for any $H \in \CZ$ and $Q$ have the $\gamma$-element, then so does $G$. Moreover, in that case $\gamma _{\pi^{-1}(H)}=1$ for any $H \in \CZ$ and $\gamma _Q=1$ if and only if $\gamma _G=1$.
\end{enumerate}
\end{thm}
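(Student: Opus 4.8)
The plan is to run the strategy of Chabert--Echterhoff--Oyono-Oyono \cite{MR1836047} and Meyer--Nest \cite{MR2193334}, feeding in Corollary \ref{cor:BCcyc} exactly where those arguments invoke the preimages $\pi^{-1}(F)$ of \emph{all} compact $F\leq Q$. The engine is the factorization $\mathsf{D}_G=\mathsf{D}_Q\circ\mathsf{D}_G^Q$ of (\ref{form:PQ}) and the induced factorization of assembly maps $\mu_{G,A}=\mu_{Q,N\ltimes^{\mathrm{PR}}A}\circ\mu_{G,A}^Q$ already established before the statement, together with $\mu_{G,A}^Q\cong\mu_{G,\mathsf{Q}\otimes A}$. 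Since $\mathsf{Q}$ is the left approximation of $\comp$ with respect to the (localizing) pair $(\ebk{\CI}_\loc,\CC)$ in $\sigma\Kas^Q$, and $Q$ has all compact subgroups Lie, Corollary \ref{cor:BCcyc} together with the $\KK^Q$-equivalence $C(E_\CZ Q)\to C(E_\mathcal{C}Q)$ lets me take $\mathsf{Q}\simeq C(E_\CZ Q)$, i.e.\ a homotopy colimit of cells $\Ind_H^Q(\Sigma^n\comp)$ with $H$ ranging only over \emph{compact cyclic} subgroups of $Q$. Viewed through $\pi^*$ as a $G$-algebra, and using $\pi^*\Ind_H^Q\cong\Ind_{\pi^{-1}(H)}^G$ (from $G/\pi^{-1}(H)\cong Q/H$) and the projection formula, the $G$-cells of $\mathsf{Q}\otimes A$ become $\Ind_{\pi^{-1}(H)}^G(\Sigma^n\Res_G^{\pi^{-1}(H)}A)$.

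For (1) I would first show that the cyclic hypothesis forces $\mu_{G,A}^Q$ to be an isomorphism (and, in the strong case, $\mathsf{D}_G^Q\otimes\id_A$ to be a $\KK^G$-equivalence). The class of $Y$ for which $G$ satisfies the (resp.\ strong) BCC is localizing, since $Y\mapsto\K_*(G\ltimes(\mathsf{N}\otimes Y))$ (resp.\ $Y\mapsto\mathsf{N}\otimes Y$) is homological and commutes with countable direct sums. As $G$ satisfies the (resp.\ strong) BCC for $\Ind_{\pi^{-1}(H)}^GD$ precisely when $\pi^{-1}(H)$ does for $D$ (induction-invariance of the assembly map via Proposition \ref{prp:indres}), the hypothesis that each $\pi^{-1}(H)$, $H\in\CZ$, satisfies the BCC for $\Res A$ applies to every cell and propagates through the homotopy colimit to the whole of $\mathsf{Q}\otimes A$; hence $\mu_{G,A}^Q$ is an isomorphism. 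Feeding this into the factorization and applying two-out-of-three gives that $\mu_{G,A}$ is an isomorphism if and only if $\mu_{Q,N\ltimes^{\mathrm{PR}}A}$ is, and by the commutative diagram and the Packer--Raeburn identification $G\ltimes A\cong Q\ltimes(N\ltimes^{\mathrm{PR}}A)$ the latter is the BCC for $Q$ with coefficients $N\ltimes^{\mathrm{PR}}_rA$. The strong version runs in parallel at the level of $\sigma\Kas$-objects, using that $j_G^Q$ carries $(\ebk{\CNI}_\loc,\CNC)$ into $(\ebk{\CI}_\loc,\CC)$.

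For (2) I would pass to the dual-Dirac picture: $G$ has a $\gamma$-element iff $\mathsf{D}_G$ admits a one-sided inverse $\eta_G$ with $\eta_G\circ\mathsf{D}_G=\id_\mathsf{P}$, the idempotent $\gamma_G=\mathsf{D}_G\circ\eta_G$ being the $\gamma$-element, and $\gamma_G=1$ iff $\mathsf{N}\cong 0$. A $\gamma$-element for $Q$ pulls back along $\pi^*$ to a section of $\mathsf{D}_Q$; the $\gamma$-elements of the groups $\pi^{-1}(H)$ give sections of the Dirac morphisms of the cells $\Ind_{\pi^{-1}(H)}^G(-)$, which assemble (again because the relevant class is localizing) into a section of $\mathsf{D}_G^Q$. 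Composing produces $\eta_G$, so $G$ has a $\gamma$-element. For the final equivalence I would read off the bottom row of (\ref{form:PQ}), the octahedral triangle $\mathsf{Q}\otimes\mathsf{N}\to\mathsf{N}\to\mathsf{M}$: here $\gamma_Q=1\Leftrightarrow\mathsf{M}\cong 0$, while $\gamma_{\pi^{-1}(H)}=1$ for all $H\in\CZ$ is equivalent to $\mathsf{Q}\otimes\mathsf{N}\cong 0$ (the cone of $\mathsf{D}_G^Q=\id_\mathsf{Q}\otimes\mathsf{D}_G$), so the triangle yields $\mathsf{N}\cong 0\Leftrightarrow\gamma_G=1$; the remaining implication $\gamma_G=1\Rightarrow\gamma_{\pi^{-1}(H)}=1$ follows by restricting the Dirac and dual-Dirac elements to the closed subgroup $\pi^{-1}(H)$.

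The main obstacle I anticipate is the bookkeeping in the two propagation steps. I must verify that the $G$-cellular description of $\mathsf{Q}\simeq C(E_\CZ Q)$ through $\pi^*$ genuinely produces only inductions from the groups $\pi^{-1}(H)$ with $H$ compact cyclic (the point at which Corollary \ref{cor:BCcyc} is indispensable, since taking $E_\CZ Q$ in place of $E_\mathcal{C}Q$ is what shrinks the required subgroups), and that the localizing-subcategory closure together with the induction-invariance of the assembly map legitimately upgrades the hypothesis imposed on the cells to the full object $\mathsf{Q}\otimes A$ — and, in (2), that the vanishing of $\gamma_{\pi^{-1}(H)}$'s is exactly equivalent to $\mathsf{Q}\otimes\mathsf{N}\cong 0$. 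Each of these rests on the compatibility $\pi^*\Ind_H^Q\cong\Ind_{\pi^{-1}(H)}^G$ intertwining the Dirac morphisms, which must be checked carefully.
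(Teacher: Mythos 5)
Your skeleton is the paper's: factor the assembly map through the partial assembly map, reduce (1) to showing that $G$ satisfies the BCC for $\mathsf{Q}\otimes A$, invoke the cyclic reduction of Corollary \ref{cor:BCcyc}, and in (2) compose one-sided inverses of $\mathsf{D}_G^Q$ and $\mathsf{D}_Q$. But the propagation step at the heart of (1) has two genuine gaps. First, the identification $\mathsf{Q}\simeq C(E_\CZ Q)$ is never justified, and it conflates two objects that play Poincar\'e-dual roles: by the paper's remark, $C(E_{\mathcal{C}}Q)$ \emph{co}represents topological K-homology, $\K^{\mathrm{top}}_*(Q;B)\cong\KK^Q(C(E_{\mathcal{C}}Q),B)$, whereas the Dirac object $\mathsf{Q}$ computes it through the crossed product, $\K^{\mathrm{top}}_*(Q;B)\cong\K_*(Q\ltimes(\mathsf{Q}\otimes B))$; identifying the two is a nontrivial duality that holds only for special (smooth, cocompact) models and then only up to Clifford twists. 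What Corollary \ref{cor:BCcyc} actually gives is $\mathsf{Q}\in\ebk{\CZI}_\loc$, the localizing subcategory generated by $\Ind_H^QB$ with $H\in\CZ$ and \emph{arbitrary} coefficients $B$, while your hypothesis only controls the trivial-coefficient cells $C_0(Q/H)\otimes A$; passing from arbitrary to trivial coefficients is exactly what the paper's appeal to Proposition 9.2 of \cite{MR2193334} accomplishes. Second, closure under countable direct sums is not enough. In the algebra picture the relevant objects are homotopy \emph{projective} limits, not colimits: $C(E_\CZ Q)$ is $\hoprojlim C_0(X_n)$ over cocompact pieces, and the Milnor construction $C(E_\CZ H)$, which upgrades the cyclic generators $C_0(Q/L)$ to $C_0(Q/H)$ for an arbitrary compact subgroup $H$ via $C_0(Q/H)\sim_{\KK^G}C_0(Q/H)\otimes C(E_\CZ H)$, is an infinite join, i.e.\ an inverse limit of its finite stages (for non-cyclic $H$ one cannot expect a finite-dimensional model). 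Hence the class $\mathfrak{N}=\{D\mid G\text{ satisfies the BCC for }D\otimes A\}$ must be closed under countable direct \emph{products} as well as sums; this is why the paper requires $\mathfrak{N}$ to be localizing \emph{and} colocalizing and proves $\mathfrak{N}\supset\pi^*\ebk{\CZI_1}_\loc^\loc=\pi^*\ebk{\CI_1}_\loc^\loc$ (the doubly closed subcategories generated by the trivial-coefficient algebras $C_0(Q/H)$ with $H$ cyclic, resp.\ arbitrary compact), and it is, in the end, the entire reason the argument is run in $\sigma\Kas^G$ rather than $\Kas^G$.

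Two further errors. The induction invariance of the BCC (``$G$ satisfies the BCC for $\Ind_{\pi^{-1}(H)}^GD$ if and only if $\pi^{-1}(H)$ does for $D$'') does not follow from Proposition \ref{prp:indres}: that adjunction requires the subgroup to be \emph{cocompact}, and $\pi^{-1}(H)$ is cocompact in $G$ only when $Q/H$ is compact; the correct input is the induction theorem of Chabert--Echterhoff, which the paper uses tacitly when it asserts $\pi^*\CZI_1\subset\mathfrak{N}$. And in (2), one-sided inverses of the Dirac morphisms of individual cells do not ``assemble'' into a left inverse of $\mathsf{D}_G^Q$ --- nothing forces the separately chosen sections to be compatible along the triangles and limits of the tower. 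The paper instead reduces to totally disconnected $Q$ (Corollary 34 of \cite{MR2313071}) and shows that $\mathsf{D}_G^*:\KK^G(A,\mathsf{P})\to\KK^G(\mathsf{P}\otimes A,\mathsf{P})$ is an isomorphism for $A\in\pi^*\ebk{\CZI}_\loc$, in particular for $A=\mathsf{Q}$, so that the left inverse $\eta_G^Q$ exists for representability reasons; your claimed equivalence ``$\gamma_{\pi^{-1}(H)}=1$ for all $H\in\CZ$ iff $\mathsf{Q}\otimes\mathsf{N}\cong0$'' is likewise asserted rather than proved and would need the same machinery.
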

\begin{proof}
To see (1), it suffices to show that $G$ satisfies the (resp.\ strong) BCC for $\mathsf{Q} \otimes A$. Consider the full subcategory $\mathfrak{N}$ of $\sigma \Kas ^G$ consisting of objects $D$ such that $G$ satisfies the (resp./ strong) BCC for $D \otimes A$. Set $\CZI_1$ be the family of all $G$-$\Cst$-algebras of the form $C_0(Q/H)$ for $H \in \CZ$. By assumption, $\mathfrak{N}$ contains $\pi ^* \CZI _1$. Since $\mathfrak{N}$ is localizing and colocalizing, $\mathfrak{N}$ contains $\pi^*\ebk{\CZI_1}_\loc^\loc$, which is equal to $\pi^*\ebk{\CI_1}_\loc^\loc$ because $C_0(Q/H)$ are $\KK^G$-equivalent to $C_0(Q/H) \otimes C(E_\CZ H)\in \pi^*\ebk{\CZI _1}^\loc$. By Proposition 9.2 of \cite{MR2193334}, we obtain $\mathsf{Q} \in \mathfrak{N}$. 

The assertion (2) is proved in the same way as Theorem 33 of \cite{MR2313071}. Actually, since  we may assume without loss of generality that $Q$ is totally disconnected by Corollary 34 of \cite{MR2313071}, the homomorphism
$$\mathsf{D}_G ^*:\KK^G(A,\mathsf{P}) \to \KK^G(\mathsf{P} \otimes A , \mathsf{P} )$$
is an isomorphism if $A \in \pi^*\ebk{\CZI}_\loc$ and in particular when $A=\mathsf{Q}$ (note that any compact subgroup is contained in an open compact subgroup which is also a Lie group by assumption). Consequently we obtain a left inverse $\eta_G^Q :\mathsf{Q} \to \mathsf{P}$ of $\mathsf{D}_G^Q$. Now, the composition $\eta _G^Q \circ \pi ^* \eta _Q: \comp \to \mathsf{P}$ is a dual Dirac morphism of $G$. Of course $\eta_G \circ \mathsf{D}_G =\id _\comp$ if $\eta _G^Q \circ \mathsf{D}_G^Q=\id _{\mathsf{Q}}$ and $\eta _Q \circ \mathsf{D}_Q=\id_\comp$.
\end{proof}

\appendix

\section{Equivariant $\KK$-theory for $\sigma$-$\Cst$-algebras}\label{section:App}
In this appendix we summarize basic properties of equivariant $\KK$-theory for $\sigma$-$C^*$-algebras for the convenience of readers. Most of them are obvious generalizations of equivariant $\KK$-theory for $\Cst$-algebras (a basic reference is \cite{MR1656031}) and non-equivariant $\KK$-theory for $\sigma$-$\Cst$-algebras  by Bonkat \cite{Bonkat}. Throughout this section we assume that $G$ is a second countable locally compact topological group.

\subsection{Generalized operator algebras and Hilbert $\Cst$-modules}
Topological properties of inverse limits of $\Cst$-algebras was studied by Phillips in \cite{MR950831}, \cite{MR996445}, \cite{MR1050491} and \cite{MR1050490}. He introduced the notion of representable $\K$-theory for $\sigma$-$\Cst$-algebras in order to formulate the Atiyah-Segal completion theorem for $\Cst$-algebras. 

\begin{defn}
A {\it pro-$G$-$\Cst$-algebra} is a complete locally convex $\ast$-algebra with continuous $G$-action whose topology is determined by its $G$-invariant continuous $\Cst$-seminorms. A pro-$G$-$\Cst$-algebra is a {\it $\sigma $-$G$-$\Cst$-algebra} if its topology is generated by countably many $G$-invariant $\Cst$-seminorms. 
\end{defn}
In other words, pro-$G$-$\Cst$-algebra is projective limit of $G$-$\Cst$-algebras. Actually, a pro-$G$-$\Cst$-algebra $A$ is isomorphic to $\varprojlim _{p \in \mathscr{S}(A)}A_p$ where $\mathscr{S}(A)$ is the net of $G$-invariant continuous seminorms and 
$$A_p:=A/ \{ x \in A \mid p(x^*x)=0 \} $$
is the completion of $A$ by the seminorm $p \in \mathscr{S}(A)$. A pro-$G$-$\Cst$-algebra is {\it separable} if $A_p$ are separable for any $p \in \mathscr{S}(A)$. If $A$ is a separable $\sigma$-$G$-$\Cst$-algebra, then it is separable as a topological space. Basic operations (full and reduced tensor products, free products and crossed products) are also well-defined for pro-$\Cst$-algebras. When $G$ is compact, any $\sigma$-$\Cst$-algebras with continuous $G$-action are actually $\sigma$-$G$-$\Cst$-algebras. 

We write $\sigma \GCsep{G}$ for the category of separable $\sigma$-$G$-$\Cst$-algebras and equivariant $\ast$-homomorphisms. Then we have the category equivalence 
$$\varprojlim : \Pro_{\nat}\GCsep{G} \to \sigma \GCsep{G}$$
where $\Pro_{\nat}\GCsep{G}$ is the category of surjective projective systems of separable $G$-$\Cst$-algebras indexed by $\nat$ with the morphism set $\Hom (\{A_n\},\{B_m\}):=\varprojlim _n \varinjlim _m \Hom (A_n,B_m)$. Actually, a $\ast$-homomorphism $\varphi : A \to B$ induces a morphism between projective systems since the composition $A \xra{\varphi} B \to B_p$ factors through some $A_q$.

Next we introduce the notion of Hilbert module over pro-$\Cst$-algebras. 

\begin{defn}
A \textit{$G$-equivariant pre-Hilbert $B$-module} is a locally convex $B$-module together with the $B$-valued inner product $\ebk{\cdot , \cdot }: E \times E \to B$ and the continuous $G$-action such that $\ebk{e_1,e_2b}=\ebk{e_1,e_2}b$, $\ebk{e_1,e_2}^*=\ebk{e_2,e_1}$, $g(\ebk{e_1,e_2})=\ebk{g(e_1),g(e_2)}$, $g(eb)=g(e)g(b)$ and the topology of $E$ is induced by seminorms $p_E(e):=p(\ebk{e,e})^{1/2}$ for $p \in \mathscr{S}(B)$. A $G$-equivariant pre-Hilbert $B$-module is a \textit{$G$-equivariant Hilbert $B$-module} if it is complete. 
\end{defn}
Basic operations (direct sums, interior and exterior tensor products and crossed products) are also well-defined (see Section 1 of \cite{MR1261582}). 

As a locally convex space, $E$ is isomorphic to the projective limit $\varprojlim _{p \in \mathscr{S}(B)}E_p$ where $E_p:=E/\{ e\in E \mid p(\ebk{e,e})=0 \}$. A $G$-equivariant Hilbert $B$-module $E$ is \textit{countably generated} if $E_p$ are countably generated for any $p \in \mathscr{S}(B)$. 

Let $\mathbb{L}(E)$ and $\Kop (E)$ be the algebra of adjointable bounded  and compact operators on $E$ respectively. They are actually pro-$G$-$\Cst$-algebras since we have isomorphisms 
$$\Lop (E) \cong \varprojlim _{p \in \mathscr{S}(B)} \Lop(E_p), \ \Kop (E) \cong \varprojlim _{p \in \mathscr{S}(B)} \Kop (E_p).$$
In particular, $\mathbb{L}(E)$ and $\Kop (E)$ are $\sigma$-$G$-$\Cst$-algebra if so is $B$. Note that $\mathbb{L}(E)$ is not separable and the canonical $G$-action on $\mathbb{L}(E)$ is not continuous in norm topology. 

Kasparov's stabilization theorem is originally introduced in \cite{MR587371} and generalized by Mingo-Phillips~\cite{MR740176} and Meyer~\cite{MR1803228} for equivariant cases. Bonkat~\cite{Bonkat} also gives a generalization for $\sigma$-$C^*$-algebras. Let $\Hilb$ be a separable infinite dimensional Hilbert space and we write $\Hilb _B$, $\Hilb _{G,B}$ and $\Kop _G$ for $\Hilb \otimes B$, $\Hilb \otimes L^2(G) \otimes B$ and $\Kop (L^2G \otimes \Hilb)$ respectively.
\begin{thm}\label{thm:stab}
Let $B$ be a $\sigma$-unital $\sigma$-$G$-$\Cst$-algebra and let $E$ be a countably generated $G$-equivariant Hilbert $B$-module together with an essential homomorphism $\varphi : \mathbb{K}_G \otimes A \to \Lop (E)$. 
Then there is an isomorphism
$$E \oplus \Hilb_{G,B} \cong \Hilb_{G,B}$$
as $G$-equivariant Hilbert $B$-modules. 
\end{thm}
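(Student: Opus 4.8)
The plan is to establish this as the $\sigma$-$G$-equivariant form of the Kasparov stabilization theorem \cite{MR587371}, by combining the equivariant $\Cst$-algebraic versions of Mingo--Phillips \cite{MR740176} and Meyer \cite{MR1803228} with the $\sigma$-$\Cst$-algebraic refinement of Bonkat \cite{Bonkat}. Writing $B \cong \varprojlim _{p \in \mathscr{S}(B)} B_p$ and correspondingly $E \cong \varprojlim _p E_p$ and $\Lop(E) \cong \varprojlim _p \Lop(E_p)$, I would carry out the Kasparov construction over each $G$-$\Cst$-algebra $B_p$ and then pass to the projective limit. The genuinely new points are the coherence of the level-wise data and the preservation of $G$-continuity, the rest being the classical argument.

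First I would fix a bounded, $G$-continuous generating sequence. Since $B$ is $\sigma$-unital it has a strictly positive element, and since $E$ is countably generated over $B$ one may choose a sequence $\{\xi_n\}$ in the unit ball of $E$ whose images generate each $E_p$, arranged so that every generator is repeated infinitely often; the essential homomorphism $\varphi : \Kop _G \to \Lop(E)$ together with an approximate unit of $\Kop _G$ lets these be taken inside the essential submodule $\overline{\varphi(\Kop _G)E}=E$ and $G$-continuous. Identifying $\Hilb _{G,B}$ with its standard frame, I would then build, following Kasparov, an adjointable operator $T : \Hilb _{G,B} \to E \oplus \Hilb _{G,B}$ as a weighted combination of a norm-limit-of-finite-rank operator $\Hilb _{G,B} \to E$ with dense range (formed from the $\xi_n$ with summable weights $2^{-n}$) and an isometric shift on the $\Hilb _{G,B}$-summand, interleaved so that $T$ has dense range while $T^*T$ is bounded below, hence invertible in $\Lop(\Hilb _{G,B})$; the strict positivity furnished by $\sigma$-unitality is used exactly here to control the lower bound. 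The polar decomposition
\[
U := T\,(T^*T)^{-1/2}
\]
is then a unitary isomorphism $\Hilb _{G,B} \xra{\cong} E \oplus \Hilb _{G,B}$, which is the desired statement.

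Equivariance I would handle by Fell's absorption principle. Because the absorbing module has been stabilized by $L^2(G)$, the regular representation on $\Hilb _{G,B}$ swallows any continuous $G$-action carried by $E$, so the weighted operator $T$ can be taken $G$-continuous. The point to verify is that $T^*T$ lies in the pro-$\Cst$-subalgebra of $\Lop(\Hilb _{G,B})$ on which $G$ acts continuously, so that continuous functional calculus applied to $(T^*T)^{-1/2}$ stays within the $G$-continuous adjointable operators; then $U$ is genuinely equivariant. This is where the hypothesis of an \emph{essential} $\Kop _G$-representation is spent, guaranteeing that the whole construction remains in the $G$-continuous category.

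The hard part will be the $\sigma$-algebraic coherence rather than the construction of $T$ or the equivariance in isolation. In the plain $\Cst$-algebraic case the polar decomposition is performed once; here the weights and generators must be chosen so that the operators $T_p \in \Lop(\Hilb _{G,B_p}, E_p \oplus \Hilb _{G,B_p})$ are compatible under the connecting maps, and so that $(T^*T)^{-1/2} = \varprojlim _p (T_p^*T_p)^{-1/2}$ really defines an adjointable operator on the inverse-limit module. I expect this to follow from Bonkat's analysis of adjointable operators on $\sigma$-$\Cst$-Hilbert modules \cite{Bonkat}, combined with a lower bound on $T_p^*T_p$ that is uniform in $p$ and hence survives the projective limit. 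Once this uniform invertibility is in hand, the functional calculus and the conclusion follow exactly as in the non-equivariant $\Cst$-algebraic case.
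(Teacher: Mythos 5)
Your non-equivariant and $\sigma$-algebraic skeleton is essentially the paper's: one chooses a \emph{single} sequence $\{e^i\}$ in $E$, bounded by $1$ in every seminorm, whose images generate every $E_p$ (the paper justifies this by the surjectivity of the unit-ball projections $(E_p)_1 \to (E_q)_1$), builds Kasparov's operator $T$ once at the level of the projective limit, and takes a polar decomposition; this single observation makes your level-wise operators $T_p$ and their compatibility problem unnecessary. The genuine gap is your equivariance step. In this setting ``$G$-continuous'' means that $g \mapsto g(U)= g\circ U\circ g^{-1}$ is norm-continuous; it does \emph{not} mean $g(U)=U$. So even granting that $T$, hence $T^*T$, hence $(T^*T)^{-1/2}$, hence $U$ are all $G$-continuous (functional calculus does preserve $G$-continuity), the conclusion ``then $U$ is genuinely equivariant'' is a non sequitur: a $G$-continuous unitary between two $G$-Hilbert modules need not intertwine the actions at all. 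For the polar decomposition to yield an equivariant unitary you would need $T$ itself to be invariant, and no choice of generators achieves that, since $G$ does not permute them; this is exactly why the equivariant stabilization theorem does not follow from the non-equivariant one by an appeal to absorption in the abstract. The paper, following Section 3 of \cite{MR1803228}, spends the essentialness of $\varphi$ in a different place: it factors $E \cong L^2(G,A)\otimes_A E_0$ with $E_0:=L^2(G,A)^*\otimes_{\Kop_G\otimes A}E$, applies the \emph{non-equivariant} stabilization to $E_0$ to get a possibly non-equivariant unitary $U:\Hilb_B \to E_0\oplus\Hilb_B$, and then defines the pointwise twist $\tilde U(g):=g(U)$ on $C_c(G,\Hilb_B)\subset L^2(G,\Hilb_B)\cong \Hilb_{G,B}$, which is equivariant by construction and extends to the desired unitary onto $E\oplus\Hilb_{G,B}$. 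That factorization --- not preservation of $G$-continuity --- is what the hypothesis that $\varphi$ is essential buys, and your proposal contains no substitute for it.

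There is also a technical error that infects your $\sigma$-coherence plan: $T$ is a compact operator with $\ssbk{T\xi^i}\le 2^{-i}+4^{-i}\to 0$, so $T^*T$ is never bounded below and never invertible, and the formula $U=T(T^*T)^{-1/2}$ does not parse. What the argument actually provides (and all it needs) is that $T^*T=\diag(4^{-2},4^{-4},\dots)+(2^{-i-j}\ebk{e^i,e^j})_{ij}$ is \emph{strictly} positive, so that $|T|$ has dense range and the phase in the polar decomposition $T=U|T|$, defined on that dense range, extends to a unitary. Consequently a ``lower bound on $T_p^*T_p$ uniform in $p$'' cannot be the mechanism that survives the projective limit, because it does not exist even for a single $p$. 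The correct $\sigma$-specific input is the one stated above: a single lifted generating sequence, uniformly bounded in all seminorms, makes $T^*T$ strictly positive directly on the limit module, so no limit of level-wise functional calculi has to be taken at all.
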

\begin{proof}
In non-equivariant case, the proof is given in Section 1.3 of \cite{Bonkat}. In fact, we have a sequence $\{ e^i \}$ in $E$ such that $\sup _n \ssbk{e_n^i} \leq 1$ and $\{ \pi (e^i) \}$ generates $E_p$ for any $p \in \mathscr{S}(B)$ since the projection $(E_p)_1 \to (E_q)_1$ between unit balls is surjective for any $p \geq q$. Now we obtain the desired unitary $U$ as the unitary factor in the polar decomposition of the compact operator 
$$T  : \Hilb_B, \to E \oplus \Hilb_B; \ T(\xi ^i)= 2^{-i} e^i \oplus 4^{-i}\xi ^i$$
where $\{ \xi ^i \}$ is a basis of $\Hilb_B$. Actually the range of $|T|$ is dense because $T^*T=\diag (4^{-2},4^{-4},\ldots) + (2^{-i-j}\ebk{e_i,e_j})_{ij}$ is strictly positive.

In equivariant case, we identify $E$ with $L^2(G,A) \otimes_A (L^2(G,A)^* \otimes _{\mathbb{K}_G \otimes A} E)$ and set $E_0:=L^2(G,A)^* \otimes _{\mathbb{K}_GA} E$. Let $U$ be the (possibly non-equivariant) unitary from $\Hilb_B$ to $E_0 \oplus \Hilb_B$ as above. Then we obtain  
$$\tilde{U}(g):=g(U) : C_c(G,\Hilb_B) \to C_c(G,E_0 \oplus \Hilb_B)$$
which extends to the unitary $\tilde {U} : \Hilb_{G,B} \cong L^2(G,\Hilb_B) \to L^2(G,E_0 \oplus \Hilb_B) \cong E \oplus \Hilb _{G,B}$. More detail is found in Section 3 of \cite{MR1803228}.
\end{proof}

A pro-$\Cst$-algebra is \textit{$\sigma$-unital} if there is a strictly positive element $h \in A$. Here, we say that an element $h \in A$ is strictly positive if $\overline{hA}=\overline{Ah}=A$. A pro-$\Cst$-algebra $A$ is $\sigma$-unital if and only if it has a countable approximate unit. A separable $\sigma$-$\Cst$-algebra is $\sigma$-unital and moreover has a countable increasing approximate unit (Lemma 5 of \cite{MR985690}).

\begin{lem}\label{lem:qc}
Let $B$ be a $\sigma$-$\Cst$-algebra with $G$-action, $A \subset B$ a $\sigma$-$G$-$\Cst$-algebra, $Y$ a $\sigma$-compact locally compact space, $\varphi:Y \to B$ a function such that $y \mapsto [\varphi(y),a]$ are continuous functions which take values in $A$. Then there is a countable approximate unit $\{ u_i \}$ for $A$ that is quasi-central for $\varphi(Y)$ and quasi-invariant, that is, the sequences $[u_i,\varphi(y)]$ ($y \in Y$) and $g(u_i)-u_i$ converge to zero.
\end{lem}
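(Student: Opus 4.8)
The plan is to produce $\{u_i\}$ as finite convex combinations of a fixed base approximate unit, forcing all the required estimates at once by the convexity (Mazur) trick of Arveson and then diagonalising over the relevant exhaustions. Concretely, fix an increasing countable approximate unit $\{e_k\}_{k\in\nat}$ for $A$ (which exists since $A$ is $\sigma$-unital), a dense sequence $\{a_j\}_{j\in\nat}$ in $A$, an increasing sequence of $G$-invariant $\Cst$-seminorms $p_1\le p_2\le\cdots$ generating the topology of $A$, and exhaustions $Y=\bigcup_m K_m$ and $G=\bigcup_m L_m$ by compact sets. Each $u_i$ will be a finite convex combination $\sum_k t_k^{(i)}e_k$ taken from a far tail of $\{e_k\}$, so that the approximate-unit estimates are inherited automatically while the commutator and invariance defects are driven to zero.

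For fixed $k$ and $m$ the two defects are continuous $A$-valued functions on compact sets: $\Phi_k\colon K_m\to A$, $\Phi_k(y):=[e_k,\varphi(y)]$, lands in $A$ and is continuous by the hypothesis applied to $a=e_k$, and $\Psi_k\colon L_m\to A$, $\Psi_k(g):=g(e_k)-e_k$, is continuous since the action on $A$ is continuous; both are bounded by $2$. The crucial point is that for each fixed $y$ and $g$ these tend weakly to $0$ in every $A_{p_n}$ as $k\to\infty$. I would see this in the bidual: $e_k\to z$ weak-$*$ in $B^{**}$, where $z=1_{A^{**}}$ is the identity of the weak-$*$ closure $A^{**}\subset B^{**}$, and since multiplication in $B^{**}$ is separately weak-$*$ continuous one gets $[e_k,\varphi(y)]\to[z,\varphi(y)]$ and $g(e_k)-e_k\to g(z)-z$ weak-$*$. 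Now $[e_k,\varphi(y)]\in A$ by hypothesis, so the limit $[z,\varphi(y)]$ lies in $A^{**}$; as $z$ is a two-sided unit for $A^{**}$ this forces $z[z,\varphi(y)]=[z,\varphi(y)]z=[z,\varphi(y)]$, which upon expanding gives $z\varphi(y)z=z\varphi(y)=\varphi(y)z$, i.e. $[z,\varphi(y)]=0$. Likewise each $g$ extends to a weak-$*$ continuous automorphism of $B^{**}$ restricting to one of $A^{**}$, whence $g(z)=z$. Thus both defects are pointwise weakly null.

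Being bounded sequences of continuous functions that are pointwise weakly null, $\{\Phi_k\}$ and $\{\Psi_k\}$ are weakly null in the Banach spaces $C(K_m,A_{p_n})$ and $C(L_m,A_{p_n})$: every functional is represented by an $(A_{p_n})^*$-valued measure, and dominated convergence applies to the integrands $\langle\mu_y,\Phi_k(y)\rangle$ and $\langle\mu_g,\Psi_k(g)\rangle$. I then proceed inductively. At stage $i$, apply Mazur's lemma to the single weakly null sequence whose $k$-th term collects the defects on $K_i$ and $L_i$, inside the finite direct sum $\bigoplus_{n\le i}\bigl(C(K_i,A_{p_n})\oplus C(L_i,A_{p_n})\bigr)$, restricting to indices $k$ beyond a threshold $k_0(i)$, to extract one finite convex combination $u_i=\sum_k t_k^{(i)}e_k$ with $\sup_{y\in K_i}p_n([u_i,\varphi(y)])<1/i$ and $\sup_{g\in L_i}p_n(g(u_i)-u_i)<1/i$ for all $n\le i$. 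Choosing $k_0(i)$ large and using $p_n(u_ia_j-a_j)\le\sum_k t_k^{(i)}p_n(e_ka_j-a_j)$ together with its right-handed analogue simultaneously forces $p_n(u_ia_j-a_j),p_n(a_ju_i-a_j)<1/i$ for $n,j\le i$. Since $p_n(u_i)\le 1$ and $\{a_j\}$ is dense, the resulting $\{u_i\}$ is a countable approximate unit for $A$ that is quasi-central uniformly on each $K_m$ and quasi-invariant uniformly on each $L_m$, in every seminorm, which is the assertion.

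The step I expect to be the main obstacle is the weak-nullity claim together with its uniform (compact-parameter) form. First, one must identify the bidual limit $z$ and show it commutes with the elements $\varphi(y)\in B$, which are not multipliers of $A$ but only satisfy $[\varphi(y),A]\subset A$; it is exactly the computation $[z,\varphi(y)]=0$ that makes quasi-centrality available at all. Second, the uniform-over-compacta quasi-invariance is delicate: naive averaging of $e_k$ over a shrinking neighbourhood of the identity in $G$ does \emph{not} control $g(u_i)-u_i$ for $g$ ranging over a fixed compact $L_m$, and it is precisely the Mazur argument inside $C(L_m,A_{p_n})$, fed only by pointwise weak nullity, that circumvents this. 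Carrying both through the projective-limit seminorm structure and packaging them into a single diagonal sequence is the remaining bookkeeping.
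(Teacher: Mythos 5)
Your overall strategy coincides with the paper's: the paper's proof is a short sketch that takes convex combinations of a fixed countable approximate unit and defers each induction step to Section 1.4 of Kasparov's Novikov paper, and that argument is precisely the Arveson-type scheme you spell out, namely pointwise weak nullity of the defects (proved in the bidual, with the computation $[z,\varphi(y)]=0$ resting on the hypothesis $[\varphi(Y),A]\subset A$ in place of $A$ being an ideal, and $g(z)=z$ for quasi-invariance), followed by Mazur applied to all defects collected in one finite direct sum, with the tail trick preserving the approximate-unit estimates, and a diagonalisation over exhaustions and seminorms. Your choice of $G$-invariant seminorms on $A$ (rather than on $B$) is also the right way to make the invariance part work.

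There is, however, one assertion that is false as stated and whose repair is a genuine missing step: you claim the commutator defects $\Phi_k(y)=[e_k,\varphi(y)]$ are ``bounded by $2$''. Nothing in the hypothesis bounds $\varphi$: it is an arbitrary function, neither continuous nor bounded (nor a multiplier of $A$), so even pointwise the best naive bound is $2q(\varphi(y))$ for a continuous $\Cst$-seminorm $q$ of $B$, and $\sup_{y\in K_m}q(\varphi(y))$ need not be finite. This is not cosmetic: norm-boundedness of $\{\Phi_k\}$ in $C(K_m,A_{p_n})$ is a \emph{necessary} condition for weak nullity (weakly convergent sequences are bounded), and it is also exactly what your dominated-convergence argument needs as dominating function; without it the Mazur step collapses. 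The fix is standard but must be said: the linear map $T\colon A\to C(K_m,A)$, $T(a)(y):=[\varphi(y),a]$, is well defined by hypothesis and has closed graph (if $a_j\to a$ in $A$ and $T(a_j)\to F$ uniformly, then for each $y$ one has $[\varphi(y),a_j]\to[\varphi(y),a]$ in $B$ by separate continuity of multiplication, so $F=T(a)$); since $A$ and $C(K_m,A)$ are Fr\'echet, $T$ is continuous, hence maps the bounded set $\{e_k\}$ to a bounded set, giving $\sup_k\sup_{y\in K_m}p_n([e_k,\varphi(y)])<\infty$. (The invariance defects $\Psi_k$ really are bounded by $2$, but only because your $p_n$ are $G$-invariant; that choice is doing real work.) A second, smaller precision point: $B$ is a $\sigma$-$\Cst$-algebra, so there is no von Neumann algebra ``$B^{**}$''; the bidual argument should be run per seminorm, in $(B_q)^{**}$ for continuous $\Cst$-seminorms $q$ of $B$ with $q|_A\geq p_n$ (and for the invariance part one can stay inside $(A_{p_n})^{**}$). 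With these two repairs your proof is complete and in fact yields the uniform-on-compacta statement that the paper actually uses in Theorem \ref{thm:tech}.
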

\begin{proof}
Let $\{ p_n \} _{n \in \zahl _{>0}}$ be an increasing sequence of invariant $\Cst$-seminorms on $B$ generating the topology of $B$ and let $\{ v_m \}$ be a countable increasing approximate unit for $A$ and $h:=\sum 2^{-k}v_k$. By induction, we can choose an increasing sequence $\{ u_n\}$ given by convex combinations of $v_i$'s such that
\begin{enumerate}
\item $p_n(u_nh-h) \leq 1/n$,
\item $p_n([u_n,\varphi (y)]) \leq 1/n$ for any $y \in \overline{Y_n}$,
\item $p_n(g(u_n)-u_n) \leq 1/n$ for any $g \in \overline{X_n}$.
\end{enumerate}
Each induction step is the same as in Section 1.4 of Kasparov~\cite{MR918241}.
\end{proof}

\begin{thm}\label{thm:tech}
Let $J$ be a $\sigma$-$G$-$\Cst$-algebra, $A_1$ and $A_2$ $\sigma$-unital closed subalgebras of $M(J)$ where $G$ acts continuously on $A_1$, $\Delta$ a separable subset of $M(J)$ such that $[\Delta ,A_1] \subset A_1$ and $\varphi :G \to M(J)$ a function such that $\sup _{g \in G, p \in \mathscr{S}(M(J))}p(\varphi (g))$ is bounded. Moreover we assume that $A_1 \cdot A_2$, $A_1 \cdot \varphi (G)$, and $\varphi (G) \cdot A_1$ are in $J$ and $g \mapsto \varphi(g)a$ are continuous functions on $G$ for any $a \in A_1 +J$. Then, there are $G$-continuous even positive elements $M_1, M_2 \in M(J)$ such that 
\begin{itemize}
\item $M_1+M_2=1$,
\item $M_ia_i$, $[M_i,d]$, $M_2\varphi(g)$, $\varphi (g)M_2$, $g(M_i)-M_i$ are in $J$ for any $a_i \in A_i$, $d \in \Delta$, $g \in G$,
\item $g \mapsto M_2\varphi(g)$ and $g \mapsto \varphi(g)M_2$ are continuous.
\end{itemize}
\end{thm}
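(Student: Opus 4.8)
The plan is to transcribe Kasparov's proof of his technical theorem (Section~3 of \cite{MR918241}) into the $\sigma$-$\Cst$ setting, the only genuinely new ingredient being that the classical construction of quasi-central, quasi-invariant approximate units is supplied here by Lemma~\ref{lem:qc}. Since $J$ is separable $\sigma$, its multiplier algebra $M(J)\cong\varprojlim_{p}M(J_p)$ is again a $\sigma$-$\Cst$-algebra (carrying a possibly discontinuous $G$-action), whose topology is generated by the countable family $\{p\}_{p\in\mathscr S(J)}$; every assertion in the conclusion is to be read in this inverse-limit topology and will be checked one seminorm at a time.

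First I would manufacture the approximate unit. Apply Lemma~\ref{lem:qc} with $B=M(J)$, with $A$ the $\sigma$-$G$-$\Cst$-algebra $A_1+J$ (on which $G$ acts continuously), with $Y$ a $\sigma$-compact space surjecting onto $G$ together with a countable set indexing a dense sequence of $\Delta$, and with the function of the lemma chosen to have image $\Delta\cup\varphi(G)$. The hypothesis $[\Delta,A_1]\subset A_1$ and the containments $\varphi(G)A_1,\,A_1\varphi(G)\subset J$ ensure that $y\mapsto[\,\cdot\,,a]$ lands in $A=A_1+J$ for $a\in A_1+J$, while the bound $\sup_{g,p}p(\varphi(g))<\infty$ and the continuity of $g\mapsto\varphi(g)a$ give the continuity hypothesis of the lemma. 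I would moreover take the approximate unit even, by passing to degree-zero parts. This produces an increasing even sequence $\{u_n\}\subset A_1+J$ with $0\le u_n\le 1$ which is an approximate unit for $A_1$, quasi-central for $\Delta\cup\varphi(G)$ and quasi-invariant: $p(u_na_1-a_1)\to 0$, $p([u_n,d])\to 0$ and $p(g(u_n)-u_n)\to 0$ for every seminorm $p$ (the last uniformly for $g$ in $\sigma$-compact sets).

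Next I would build $M_2$ from $\{u_n\}$ exactly as in \cite{MR918241}: after passing to a rapidly increasing subsequence I assemble $M_2$ as a series in the differences $u_{n+1}-u_n$ that converges strictly in $M(J)$, with $0\le M_2\le 1$ even, and set $M_1:=1-M_2$. The required properties then follow from the classical estimates, each verified per seminorm: $M_1a_1=(1-M_2)a_1\in J$ because $u_na_1\to a_1$; $M_2a_2\in J$ because $u_n\in A_1+J$ and $A_1A_2,\,JA_2\subset J$; $M_2\varphi(g),\varphi(g)M_2\in J$ because $A_1\varphi(G),\varphi(G)A_1\subset J$; $[M_i,d]\in J$ from quasi-centrality; $g(M_i)-M_i\in J$ from quasi-invariance; and the continuity of $g\mapsto M_2\varphi(g)$ and $g\mapsto\varphi(g)M_2$ from the continuity of $g\mapsto\varphi(g)a$ on $A_1+J$ together with the uniform bound on $\varphi(G)$.

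The main obstacle is analytic rather than algebraic: one must choose the subsequence and organize the assembling series so that the outcome is a genuine element of $M(J)$ and so that all of Kasparov's norm estimates hold simultaneously in every seminorm $p\in\mathscr S(J)$, not merely in a single $\Cst$-norm. This is where the pro-$\Cst$ structure $M(J)=\varprojlim M(J_p)$ intervenes, and it is handled by running the inductive choice of the subsequence through the countable cofinal family of seminorms. A secondary difficulty, already anticipated in the formulation of Lemma~\ref{lem:qc}, is the non-compactness of $G$: the conditions on $\varphi(G)$ must be drawn from the uniform bound $\sup_{g,p}p(\varphi(g))<\infty$ and from pointwise continuity of $g\mapsto\varphi(g)a$ alone, which is precisely why the approximate unit is produced over a $\sigma$-compact parameter space and why quasi-invariance is needed uniformly on $\sigma$-compact subsets of $G$.
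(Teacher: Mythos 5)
Your first paragraph matches the paper's opening move: Lemma \ref{lem:qc} is indeed the device that supplies quasi-central, quasi-invariant approximate units inside $M(J)$, with all estimates run seminorm by seminorm over an exhaustion of $G$ by relatively compact sets. The gap is in your second paragraph. Kasparov's construction on p.~151 of \cite{MR918241}, which the paper follows (combined with Theorem 10 of \cite{MR985690} for the $\sigma$-$\Cst$ bookkeeping), uses \emph{two} interleaved approximate units: one, $\{u_n\}$, for $A_1$, and a second one, $\{v_n\}$, for the ideal $J$ itself, chosen recursively \emph{after} the $u_n$'s so that $v_n$ almost acts as a unit on a finite set $W_n$ assembled from $u_n$, $u_{n+1}$, $h_2$ and $\varphi(\overline{X}_n)$ (the paper's condition $p_n(v_nw-w)\le 2^{-2n}$ for $w\in W_n$). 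One then puts $b_n=(v_n-v_{n-1})^{1/2}$ and $M_2=\sum b_nu_nb_n$. The skeleton $b_n$ lies in $J$, so every term of the series for $M_1a_1=\sum b_n(1-u_n)b_na_1$, $M_2a_2$, $[M_i,d]$, $M_2\varphi(g)$, $g(M_i)-M_i$ is in $J$, and the interlocking condition on $v_n$ is exactly what makes those series norm-convergent, hence with sums in $J$.

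You instead build everything from a single sequence $\{u_n\}$ and take "a series in the differences $u_{n+1}-u_n$". This fails concretely. If $\{u_n\}$ is an approximate unit for $A_1$ only (as you state), the partial sums of the skeleton converge strictly to $1$ only if $u_Nj\to j$ for every $j\in J$, i.e.\ only if $\{u_n\}$ acts as an approximate unit on $J$, which an approximate unit for $A_1$ has no reason to do (try $J=C_0(\real)$ and $A_1=C_0((0,1))\subset M(J)$); so the series you need does not converge strictly. Under the charitable reading that your $\{u_n\}$ is an approximate unit for all of $A_1+J$ (so the skeleton does sum strictly to $1$), the construction still fails: the pieces $e_n=(u_{n+1}-u_n)^{1/2}$ lie in $A_1+J$, not in $J$, so even granting norm convergence the terms of $M_1a_1=\sum e_n(1-u_n)e_na_1$ lie only in the closed algebra $A_1+J$; passing to the quotient $(A_1+J)/J$ gives $M_1a_1\equiv\sum \bar{e}_n(1-\bar{u}_n)\bar{e}_n\bar{a}_1$, which is a sum of nonnegative terms that is nonzero in general, so $M_1a_1\notin J$. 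Moreover, the smallness conditions needed for norm convergence of $M_2a_2$ and $M_2\varphi(g)$ become self-referential with one sequence (you would need $u_n$ to almost act as a unit on sets defined using $u_n$ itself), which is precisely what the recursive two-sequence choice avoids. So the missing idea is the second approximate unit $\{v_n\}$ for $J$ together with the interlocked recursion; without it, the "classical estimates" you appeal to cannot even be formulated.
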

\begin{proof}
The proof is given by the combination of arguments in p.151 of \cite{MR918241} and in Theorem 10 of \cite{MR985690}. Actually, by Lemma \ref{lem:qc} we get an approximate unit $\{ u_n \}$ for $A_1$ and $\{ v_n \} $ for $J$ such that
\begin{enumerate}
\item $p_n(u_nh_1-h_1) \leq 2^{-n}$,
\item $p_n([u_n,y]) \leq 2^{-n}$ for any $y \in Y$,
\item $p_n(g(u_n)-u_n) \leq 2^{-n}$ for any $g \in X_n$,
\item $p_n(v_nw-w) \leq 2^{-2n}$ for any $w \in W_n$,
\item $p_n([v_n,z])$ is small enough to $p_n([b_n,z]) \leq 2^{-n}$ for any $z \in \{ h_1, h_2 \} \cup Y \cup \varphi (\overline{X}_n)$,
\item $p_n(g(b_n)-b_n) \leq 2^{-n}$ for any $g \in \overline{X}_n$,
\end{enumerate}
where $h_1$, $h_2$, $k$ are strictly positive element in $A_1$, $A_2$ and $J$ respectively such that $p_n(h_1), p_n(h_2), p_n(k) \leq 1$ for any $n$, $Y \subset \Delta$ is a compact subset whose linear span is dense in $\Delta$, $X_n$ is a increasing sequence of relatively compact open subsets of $G$ whose union is dense in $G$, $W_n:=\{ k,u_nh_2,u_{n+1}h_2 \} \cup u_n \varphi (\overline{X}_n) \cup u_{n+1} \varphi (\overline{X}_{n+1}) \cup \varphi (\overline{X}_n)u_n \cup \varphi (\overline{X}_{n+1})u_{n+1}$ and $b_n:=(v_n-v_{n-1})^{1/2}$. Now, the finite sum $\sum  b_nu_nb_n$ converges strictly to the desired element $M_2 \in M(J)$. 
\end{proof}

\subsection{Equivariant $\KK$-groups}
A generalization of $\KK$-theory for pro-$\Cst$-algebras was first defined by Weidner~\cite{MR1014824} and was generalized for equivariant case by Schochet~\cite{MR1261582}. Here the notion of coherent $A$-$B$ bimodule is introduced in order to avoid Kasparov's technical lemma for pro-$\Cst$-algebras. On the other hand, Bonkat~\cite{Bonkat} introduced a new definition of $\KK$-theory for $\sigma$-$\Cst$-algebras applying the technical lemma \ref{thm:tech} for $\sigma$-$\Cst$-algebras. In this paper we adopt the latter definition.

\begin{defn}
Let $A$ and $B$ be $\sigma$-unital $\zahl /2$-graded $\sigma$-$G$-$\Cst$-algebras. A $G$-equivariant Kasparov $A$-$B$ bimodule is a triplet $(E,\varphi,F)$ where
\begin{itemize}
\item $E$ is a $\zahl /2$-graded countably generated $G$-equivariant Hilbert $B$-module,
\item $\varphi :A \to \mathbb{L}(E)$ is a graded $G$-equivariant $\ast$-homomorphism,
\item $F \in \mathbb{L}(E)_{\mathrm{s.a.}}^{\mathrm{odd}}$ such that $[F,\varphi (A)], \varphi (A) (F^2-1),  \varphi (A)(g(F)-F) \in \Kop(E)$ and $\varphi (a)F,F\varphi (a)$ are $G$-continuous.
\end{itemize}
\end{defn}

Two $G$-equivariant Kasparov $A$-$B$ bimodules $(E_1, \varphi _1 , F_1)$ and $(E_2,\varphi _2, F_2)$ are \textit{unitary equivalent} if there is a unitary $u \in \mathbb{L}(E_1, E_2)$ such that $u\varphi _1u^* =\varphi _2$ and $uF_1u^*=F_2$. Two $G$-equivariant Kasparov $A$-$B$ bimodules $(E_1, \varphi _1 , F_1)$ and $(E_2,\varphi _2, F_2)$ are \textit{homotopic} if there is a Kasparov $G$-equivariant $A$-$IB$ bimodule $(E,\varphi ,F)$ such that $(\ev _i) _*(E,\varphi ,F)$ are unitary equivalent to $(E_i , \varphi _i, F_i)$. 

\begin{defn}
Let $A$ and $B$ be $\sigma$-unital $\zahl /2$-graded $\sigma$-$G$-$\Cst$-algebras. The $\KK$-group $\KK^G(A,B)$ is the set of homotopy equivalence classes of $G$-equivariant Kasparov $A$-$B$ bimodules. 
\end{defn}
It immediately follows from the definition that $\KK^G(\comp, A)$ is canonically isomorphic to the representable equivariant $\K$-group $\mathcal{R}K_0^G(A)$ of Phillips\cite{MR1050490}.

\begin{defn}
Let $(E_1,\varphi_1,F_1)$ be a Kasparov $A$-$B$ $G$-bimodule and $(E_2,\varphi _2 ,F_2)$ a $G$-equivariant Kasparov $B$-$C$ bimodule. A Kasparov product of $(E_1,\varphi_1,F_1)$ and $(E_2,\varphi _2 ,F_2)$ is a $G$-equivariant Kasparov $A$-$C$ bimodule $(E_1 \otimes _B E_2, \varphi , F)$ that satisfies the following.
\begin{enumerate}
\item The operator $F \in \mathbb{L}(E_1 \otimes_B E_2)$ is an \textit{$F_2$-connection}. That is, $T_x \circ F_2 - (-1)^{\deg x \cdot \deg F_2}F \circ T_x$ and $F_2 \circ T_x^* - (-1)^{\deg x \cdot \deg F_2}T_x ^* \circ F$ are compact for any $x \in E_1$.
\item $\varphi (a)[F_1 \otimes 1,F] \varphi (a)^* \geq 0$ mod $\Kop(E)$.
\end{enumerate}
\end{defn}

\begin{thm}
Let $A$, $B$, $C$ and $D$ be $\sigma$-unital $\sigma$-$G$-$\Cst$-algebras. Moreover we assume that $A$ is separable. The Kasparov product gives a well-defined group homomorphism
$$\KK^G(A,B) \otimes \KK^G(B,C) \to \KK^G(A,C)$$
which is associative, that is, $(x \otimes _B y) \otimes _C z=x \otimes _B (y \otimes _C z)$ for any $x \in \KK^G(A,B)$, $y \in \KK^G(B,C)$ and $z \in \KK^G(C,D)$ when $B$ is also separable.
\end{thm}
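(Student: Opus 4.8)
The plan is to follow the classical scheme of Kasparov and Connes--Skandalis, using the technical theorem for $\sigma$-$\Cst$-algebras (Theorem \ref{thm:tech}) in place of Kasparov's technical lemma at every stage. First I would establish the \emph{existence} of a product. Given cycles $x=(E_1,\varphi_1,F_1)\in \KK^G(A,B)$ and $y=(E_2,\varphi_2,F_2)\in\KK^G(B,C)$, form the graded interior tensor product $E:=E_1\otimes_B E_2$ with left action $\varphi:=\varphi_1\otimes 1$. Using the stabilization theorem (Theorem \ref{thm:stab}) to absorb a degenerate summand, one exhibits an $F_2$-connection $G\in\Lop(E)$ in the sense of the product definition; since the $F_2$-connections form an affine space, this choice is flexible. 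All compactness and continuity conditions throughout are tested against the whole family of seminorms through the identifications $\Kop(E)\cong\varprojlim_p\Kop(E_p)$ and $\Lop(E)\cong\varprojlim_p\Lop(E_p)$, so that no obstruction beyond the $\Cst$-case appears.

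I would then apply Theorem \ref{thm:tech} to $J:=\Kop(E)$, with $A_1$ built from $\varphi(A)$ together with $\varphi(A)\bigl(1-(F_1\otimes 1)^2\bigr)$ and $[\,F_1\otimes 1,\varphi(A)]$, with $A_2$ the corresponding algebra built from the connection $G$, with $\Delta$ the separable set $\{F_1\otimes 1,\,G\}\cup\varphi(A)$, and with $\varphi(g)$ the group action on $M(J)$. This yields positive $G$-continuous multipliers $M_1,M_2$ with $M_1+M_2=1$, and setting
\[
F:=M_1^{1/2}(F_1\otimes 1)+M_2^{1/2}G
\]
produces a triple $(E,\varphi,F)$. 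The defining properties of $M_1,M_2$ force $[F,\varphi(A)]$, $\varphi(A)(F^2-1)$ and $\varphi(A)(g(F)-F)$ into $\Kop(E)$ and force $\varphi(a)F$, $F\varphi(a)$ to be $G$-continuous, so $(E,\varphi,F)$ is a Kasparov $A$-$C$ bimodule; one then checks that $F$ is an $F_2$-connection and that $\varphi(a)[F_1\otimes 1,F]\varphi(a)^*\geq 0$ mod $\Kop(E)$. Here the separability of $A$ is exactly what makes $\Delta$ separable, as Theorem \ref{thm:tech} demands.

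Next I would prove \emph{well-definedness}. Any two products built as above are operator homotopic, since the set of admissible operators is convex modulo $\Kop(E)$ and a second application of Theorem \ref{thm:tech} connects two choices; thus the product is independent of all auxiliary data. Homotopy invariance in each variable follows by forming the product with a homotopy cycle, and compatibility with direct sums gives bilinearity, so the product descends to a group homomorphism $\KK^G(A,B)\otimes\KK^G(B,C)\to\KK^G(A,C)$.

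Finally, \emph{associativity} is the main obstacle. Given a third cycle $z\in\KK^G(C,D)$, both iterated products live on the triple tensor product $E_1\otimes_B E_2\otimes_C E_3$, and by the uniqueness statement it suffices to exhibit a single operator $F$ on this module representing both bracketings simultaneously. Following Connes--Skandalis, I would apply Theorem \ref{thm:tech} to $J=\Kop(E_1\otimes_B E_2\otimes_C E_3)$ with an enlarged separable $\Delta$ containing $F_1\otimes 1$, an $F_2$-connection and an $F_3$-connection, and verify that the resulting $F$ is at once an $F_3$-connection, a connection for the composite $y\otimes_C z$, and a solution of the two positivity inequalities. This is where the extra hypothesis that $B$ be separable enters: it keeps the operators coming from the $B$-module structure inside a separable $\Delta$, so that the technical theorem applies. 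The bookkeeping of connections and the check that each operator remains $G$-continuous over the inverse system are the delicate points, but once Theorem \ref{thm:tech} is available these are formally identical to the $\Cst$-algebra case.
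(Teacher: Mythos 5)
Your proposal takes essentially the same route as the paper's own proof, which simply observes that the classical arguments of Kasparov and Connes--Skandalis for existence, uniqueness up to homotopy, well-definedness and associativity carry over verbatim once the stabilization theorem (Theorem \ref{thm:stab}) and the technical theorem (Theorem \ref{thm:tech}) are available for $\sigma$-$G$-$\Cst$-algebras. The only point the paper flags that you pass over is the normalization $\sup_{p \in \mathscr{S}(\Lop(E))} p(F) \leq 1$, arranged by functional calculus before applying Theorem \ref{thm:tech}: without it the boundedness hypothesis $\sup_{g \in G,\, p} p(\varphi(g)) < \infty$ of that theorem can fail in the $\sigma$-setting, whereas your remark that separability of $A$ (hence topological separability) makes $\Delta$ separable matches the paper's second observation.
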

\begin{proof}
What we have to show is existence, uniqueness up to homotopy, well-definedness of maps between $\KK$-groups and associativity of the Kasparov product. All of them are proved in the same way as in Theorem 12 and Theorem 21 of \cite{MR743845} or Theorem 2.11 and Theorem 2.14 of \cite{MR918241}. Note that we can apply the Kasparov technical lemma \ref{thm:tech} since we may assume that $\sup_{p \in \mathscr{S}(\Lop (E))} p(F) \leq 1$ by a functional calculus and a separable $\sigma$-$\Cst$-algebra is separable as a topological algebra (see also Section 18.3 - 18.6 of \cite{MR1656031}).
\end{proof}

Moreover, we obtain the Puppe exact sequence (as Theorem 19.4.3 of \cite{MR1656031}) for a $\ast$-homomorphism between $\sigma$-$\Cst$-algebras and the six term exact sequences (Theorem 19.5.7 of \cite{MR1656031}) for a semisplit exact sequence of $\sigma$-$\Cst$-algebras by the same proofs. 

Next we deal with the Cuntz picture~\cite{MR733641} (see also \cite{MR1803228}) of $\KK$-theory for $\sigma$-$G$-$\Cst$-algebras.

\begin{defn}[Definition 2.2 of \cite{MR733641}]
We say that $(\varphi _0,\varphi _1) : A \rightrightarrows D \rhd J \to B$ is an equivariant \textit{prequasihomomorphism} from $A$ to $B$ if $D$ is a $\sigma$-unital $\sigma $-$\Cst$-algebra with $G$-action, $\varphi _0$ and $\varphi _1$ are equivariant $\ast$-homomorphisms from $A$ to $D$ such that $\varphi_0 (a)-\varphi _1(a)$ are in a separable $G$-invariant ideal $J$ of $D$ such that the restriction of the $G$-action on $J$ is continuous, and $J \to B$ is an equivariant $\ast$-homomorphism. Moreover we say that $(\varphi_0,\varphi_1)$ is \textit{quasihomomorphism} if $D$ is generated by $\varphi _0(A)$ and $\varphi _1(A)$, $J$ is generated by $\{ \varphi _0(a) -\varphi _1(a) \mid a \in A \}$ and $J \to B$ is injective. 
\end{defn}

The idea given in \cite{MR899916} is also generalized for $\sigma$-$G$-$\Cst$-algebras. 
\begin{defn}
Let $A$ and $B$ be $\sigma$-$G$-$\Cst$-algebras. The full free product $A \ast B$ is the $\sigma$-$G$-$\Cst$-algebra given by the completion of the algebraic free product $A \ast _{\mathrm{alg}} B$ by seminorms
$$ p_{\pi_A,\pi _B}(a_1b_1 \ldots a_nb_n)=\ssbk{\pi _A (a_1)\pi _B (b_1) \ldots \pi _A(a_n)\pi _B (b_n)}$$
where $\pi_A$ and $\pi _B$ are $\ast$-representations of $A$ and $B$ on the same Hilbert space. In other words, when $A=\varprojlim A_n$ and $B=\varprojlim B_m$, the free product $A \ast B$ is the projective limit $\varprojlim (A_n \ast B_m)$.
\end{defn}

By definition, any $\ast$-homomorphisms $\varphi _A :A \to D$ and $\varphi _B : B \to D$ are uniquely extended to $\varphi _A * \varphi _B : A \ast B \to D$. We denote by $QA$ the free product $A \ast A$ and by $qA$ the kernel of the $\ast$-homomorphism $\id _A \ast \id _A : QA \to A$. 

Since we have the stabilization theorem \ref{thm:stab} and the technical theorem \ref{thm:tech} for $\sigma$-$G$-$\Cst$-algebras, the following properties of quasihomomorphisms and $\KK$-theory is proved in the same way. We only enumerate their statements and references for the proofs. Here we write $q_sA$ for the $G$-$\Cst$-algebra $q(A \otimes \Kop _G)$.
\begin{itemize}
\item The set of homotopy classes of $G$-equivariant quasihomomorphisms from $A \otimes \Kop _G$ to $B \otimes \Kop _G$ is isomorphic to $\KK^G(A,B)$ (Section 5 of \cite{MR733641}).
\item The functor $\KK^G: \GCsep{G} \times \GCsep{G}  \to R(G){\mathchar`-}\mathbf{Mod}$ is stable and split exact in both variables (Proposition 2.1 of \cite{MR899916}).
\item For any $\sigma$-$G$-$\Cst$-algebras $A$ and $B$, $A \ast B$ and $A \oplus B$ are $\KK ^G$-equivalent (proof of Proposition 3.1 of \cite{MR899916}).
\item The element $\pi _A:=[\pi _0]$ in $\KK^G(qA,A)$ where $\pi _0:=(\id _A \ast 0 )|_{qA} : qA \to A$ is the $\KK^G$-equivalence (Proposition 3.1 of \cite{MR899916}).
\item There is a one-to-one correspondence between $G$-equivariant quasihomomorphisms from $A \otimes \Kop _G$ to $B \otimes \Kop _G$ and $G$-equivariant $\ast$-homomorphisms from $q_sA$ to $B \otimes \Kop _G$ (Theorem 5.5 of \cite{MR1803228}).
\item There is a canonical isomorphism $\KK^G(A,B) \cong [q_sA,B \otimes \Kop_G]^G$ (the stabilization theorem \ref{thm:stab} and Proposition 1.1 of \cite{MR899916}). 
\item The correspondence
\ma{[q_sA \otimes \Kop _G , q_sB  \otimes \Kop _G]^G &\to \KK^G(A,B)\\
\varphi &\mapsto \pi_B \circ \varphi \circ (\pi _A)^{-1}
}
induces the natural isomorphism (Theorem 6.5 of \cite{MR1803228}).
\end{itemize}

For a projective system $\{ A_n, \pi _n \}$ of $\sigma$-$\Cst$-algebras, the homotopy projective limit $\hoprojlim A_n $ is actually isomorphic to the mapping telescope 
$$\Tel A_n:= \{ f \in \prod C([0,1],A_n) \mid f_n(1)=\pi _n (f(0)) \}.$$
The following theorem follows from the fact that the functor $\KK^G(A,\blank)$ and $\KK^G(\blank , B)$ is compatible with direct products when $B$ is a $G$-$\Cst$-algebra. 

\begin{thm}\label{thm:limit}
The following holds:
\begin{enumerate}
\item Let $\{ A_n \}_{n \in \zahl _{>0}}$ be a inductive system of $\sigma$-$G$-$\Cst$-algebras and $A:=\hoinjlim A_n$. For a $\sigma$-$G$-$\Cst$-algebra $B$, there is an exact sequence
$$0 \to {\varprojlim }^1 \KK ^G_{*+1}(A_n,B) \to \KK^G(A,B) \to \KK^G_*(A_n,B) \to 0.$$
\item Let $\{ B _n \} _{n \in \zahl _{>0}}$ be a projective system of $\sigma$-$G$-$\Cst$-algebras and $B:=\hoprojlim B_n$. For a $\sigma$-$G$-$\Cst$-algebra $B$, there is an exact sequence
$$0 \to {\varprojlim }^1 \KK ^G_{*+1}(A,B_n) \to \KK^G(A,B) \to \varprojlim \KK^G_*(A,B_n) \to 0$$
\item Let $\{ A_n \}_{n \in \zahl _{>0}}$ be a projective system of $\sigma$-$G$-$\Cst$-algebras and $A:=\hoprojlim A_n$. For a $G$-$\Cst$-algebra $B$, there is an isomorphism
$$\KK^G(A,B) \cong \varinjlim \KK^G(A_n,B).$$
\end{enumerate}
\end{thm}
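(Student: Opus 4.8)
The plan is to deduce all three statements by feeding the defining exact triangles of the homotopy (co)limits into the long exact sequence for $\KK^G$ and then identifying the product terms and the connecting map. Recall that $\hoprojlim B_n$ sits in the triangle $\Sigma^{-1}\prod B_n \to \hoprojlim B_n \to \prod B_n \xrightarrow{\id - S}\prod B_n$ with $S=\prod\varphi_m^{m+1}$, and dually $\hoinjlim A_n$ sits in $\bigoplus A_n \xrightarrow{\id - S}\bigoplus A_n \to \hoinjlim A_n \to \Sigma\bigoplus A_n$. The only inputs beyond exactness are the additivity isomorphisms $\KK^G(\bigoplus A_n,B)\cong \prod\KK^G(A_n,B)$ for the coproduct in the first variable and $\KK^G(A,\prod B_n)\cong \prod\KK^G(A,B_n)$ for products in the second variable, together with the standard algebraic description of the derived limit $\varprojlim^1$.

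For (2) I would apply the homological functor $\KK^G(A,\blank)$ to the rotated triangle $B\to \prod B_n \xrightarrow{\id-S}\prod B_n\to \Sigma B$ for $B=\hoprojlim B_n$. Using $\KK^G(A,\prod B_n)\cong \prod\KK^G(A,B_n)$, the long exact sequence reduces to $\prod\KK^G_*(A,B_n)\xrightarrow{\id-S_*}\prod\KK^G_*(A,B_n)$ glued to $\KK^G_*(A,B)$ by connecting maps. Since $\{B_n\}$ is a tower, $\{\KK^G_*(A,B_n)\}$ is a projective system and $S_*$ is its shift, so $\Ker(\id-S_*)=\varprojlim$ and $\Cok(\id-S_*)=\varprojlim^1$ by the usual presentation of $\varprojlim^1$; splicing gives the claimed short exact sequence. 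Part (1) is identical after applying the contravariant functor $\KK^G(\blank,B)$ to the $\hoinjlim$-triangle: a directed system of algebras becomes, under a contravariant functor, a projective system of groups $\{\KK^G_*(A_n,B)\}$, so the same $\varprojlim/\varprojlim^1$ identification yields the stated sequence with right-hand term $\varprojlim\KK^G_*(A_n,B)$.

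For (3) I would argue directly with the Cuntz picture rather than with the product triangle, since this is exactly where the hypothesis that $B$ is a genuine $G$-$\Cst$-algebra is essential and where a naive triangle computation would spuriously manufacture a $\varprojlim^1$-term. Write $\KK^G(A,B)\cong [q_sA,B\otimes \Kop_G]^G$ and observe that $B\otimes \Kop_G$ is again a genuine $G$-$\Cst$-algebra. As $A=\hoprojlim A_n$ is represented by the countable projective system $\{A_n\}$, so is $q_sA$ by $\{q_sA_n\}$, and the equivalence $\varprojlim\colon \Pro_\nat\GCsep{G}\to \sigma\GCsep{G}$ shows that every equivariant $\ast$-homomorphism from $q_sA$ into the genuine algebra $B\otimes \Kop_G$ factors through some finite stage $q_sA_n$; the same holds for homotopies because $C([0,1],B\otimes \Kop_G)$ is still genuine. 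Hence $[q_sA,B\otimes \Kop_G]^G\cong \varinjlim_n [q_sA_n,B\otimes \Kop_G]^G$, which is precisely $\varinjlim_n\KK^G(A_n,B)$. No $\varprojlim^1$ appears because morphisms out of a countable pro-object into a genuine object are computed by a filtered colimit of morphism sets.

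The routine bookkeeping — tracking the $\zahl/2$-grading and the degree shift produced by $\Sigma$ in the first variable, and verifying that the connecting map is genuinely $\id-S$ under the additivity isomorphisms — I would defer to the standard references. The one genuinely delicate point is part (3): one must resist running the $\prod$-triangle (which would formally leave a $\varprojlim^1$-term) and instead invoke the factorization property of the countable pro-category, which is valid only because $B$ is a genuine $G$-$\Cst$-algebra. This hypothesis is exactly what separates (3) from (1) and (2).
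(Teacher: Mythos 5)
Your treatment of parts (1) and (2) is correct and coincides with the paper's (one-line) intended argument: feed the defining triangles of $\hoinjlim$ and $\hoprojlim$ into the long exact sequence, use the (co)product compatibility $\KK^G(\bigoplus A_n,B)\cong\prod\KK^G(A_n,B)$ and $\KK^G(A,\prod B_n)\cong\prod\KK^G(A,B_n)$, and identify $\Ker(\id-S)$ and $\Cok(\id-S)$ on the product with $\varprojlim$ and $\varprojlim^1$ (your reading of the right-hand term of (1) as $\varprojlim\KK^G_*(A_n,B)$ is indeed the intended one).

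Part (3) is where you go astray, in two ways. First, your reason for abandoning the triangle --- that it ``would spuriously manufacture a $\varprojlim^1$-term'' --- is mistaken, and it misdiagnoses where the hypothesis on $B$ enters. The intended proof runs the same triangle $\Sigma^{-1}\prod A_n \to \hoprojlim A_n \to \prod A_n \xra{\id-S}\prod A_n$ and uses precisely the fact the paper cites before the theorem: when $B$ is a genuine $G$-$\Cst$-algebra, the canonical map $\bigoplus_n \KK^G(A_n,B)\to\KK^G(\prod_n A_n,B)$ is an isomorphism, i.e.\ the contravariant functor turns countable products into direct sums. Since for an inductive system of abelian groups one has the exact sequence $0\to\bigoplus G_n\xra{\id-S}\bigoplus G_n\to\varinjlim G_n\to 0$, with $\id-S$ \emph{injective} on the direct sum, the long exact sequence collapses to $\KK^G(A,B)\cong\varinjlim\KK^G(A_n,B)$ with no derived-limit term. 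The absence of $\varprojlim^1$ comes from injectivity of $\id-S$ on $\bigoplus$, not from avoiding the triangle.

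Second, your substitute argument has a genuine gap: the claim that $A=\hoprojlim A_n$ ``is represented by the countable projective system $\{A_n\}$'' is false. The homotopy projective limit is defined by the triangle above (equivalently, it is the mapping telescope $\Tel A_n$); it is not the pro-$\Cst$-algebra $\varprojlim A_n$, and as an object of $\Pro_\nat\GCsep{G}$ the telescope is the system of its finite telescopes, which is only pro-\emph{homotopy} equivalent to $\{A_n\}$ --- this requires an argument (each finite telescope retracts onto its last stage) and requires the connecting maps to be $\ast$-homomorphisms. Worse, the projective system in (3) lives in the triangulated category $\sigma\Kas^G$, so its connecting maps may be mere $\KK$-classes; in that case there is no pro-$\Cst$-algebra $\varprojlim A_n$ and no system $\{q_sA_n\}$ at all, and your Cuntz-picture factorization cannot even start, whereas the triangle argument applies verbatim. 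Your factorization idea is sound in substance, but its correct role is as the proof of the compatibility isomorphism $\KK^G(\prod A_n,B)\cong\bigoplus\KK^G(A_n,B)$ itself, applied to the system of finite subproducts (where the structure maps are genuine equivariant surjections); it cannot be applied directly to $\hoprojlim A_n$ as you do.
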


\begin{cor}\label{cor:sigma}
Let $A=\hoprojlim A_n$ and $B=\hoprojlim B_m$ be homotopy projective limits of $\Cst$-algebras. There is an exact sequence
$$0 \to {\varprojlim _m}^1 \varinjlim_n \KK^G_{*+1}(A_n,B_m) \to \KK^G_*(A,B) \to \varprojlim_m \varinjlim_n \KK^G(A_n,B_m) \to 0.$$
\end{cor}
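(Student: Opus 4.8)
The plan is to deduce the statement by applying the two one-variable results of Theorem \ref{thm:limit} in succession, handling the projective limit in the second variable first and the projective limit in the first variable afterwards. This order is forced, since part (3) of Theorem \ref{thm:limit} requires an honest $G$-$\Cst$-algebra in the second slot, whereas $B=\hoprojlim_m B_m$ is only a $\sigma$-algebra.

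First I would regard $A=\hoprojlim_n A_n$ as a fixed $\sigma$-$G$-$\Cst$-algebra and apply Theorem \ref{thm:limit}(2) to the projective system $\{B_m\}$ of $G$-$\Cst$-algebras. This produces the Milnor-type short exact sequence
\[
0 \to {\varprojlim_m}^1 \KK^G_{*+1}(A,B_m) \to \KK^G_*(A,B) \to \varprojlim_m \KK^G_*(A,B_m) \to 0.
\]
Next, for each fixed $m$ the algebra $B_m$ is an ordinary $G$-$\Cst$-algebra, so Theorem \ref{thm:limit}(3) applies to $A=\hoprojlim_n A_n$ and yields
\[
\KK^G_*(A,B_m) \cong \varinjlim_n \KK^G_*(A_n,B_m),
\]
together with the analogous isomorphism in degree $*+1$.

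The remaining point is to verify that these isomorphisms are natural in $m$. The isomorphism of Theorem \ref{thm:limit}(3) is induced by the canonical projections $A\to A_n$, hence by pullback it is functorial in the second variable and commutes with the bonding morphisms $B_{m+1}\to B_m$. Consequently $\{\KK^G_*(A,B_m)\}_m \cong \{\varinjlim_n \KK^G_*(A_n,B_m)\}_m$ as projective systems in $m$, and applying $\varprojlim_m$ and $\varprojlim_m^1$ termwise turns the short exact sequence above into
\[
0 \to {\varprojlim_m}^1 \varinjlim_n \KK^G_{*+1}(A_n,B_m) \to \KK^G_*(A,B) \to \varprojlim_m \varinjlim_n \KK^G_*(A_n,B_m) \to 0,
\]
which is the desired assertion. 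The one genuine obstacle is exactly this naturality check: one must confirm that the identification of Theorem \ref{thm:limit}(3) respects the structure maps of $\{B_m\}$, so that both the limit and the derived limit $\varprojlim^1$ are correctly computed on the inductive-limit groups $\varinjlim_n \KK^G_*(A_n,B_m)$.
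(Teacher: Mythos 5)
Your proposal is correct and follows exactly the route the paper intends: Corollary \ref{cor:sigma} is stated without proof immediately after Theorem \ref{thm:limit}, and the evident derivation is precisely yours — apply part (2) to the projective system $\{B_m\}$ in the second variable, then identify each term $\KK^G_*(A,B_m)$ via part (3), which applies because each $B_m$ is an honest $G$-$\Cst$-algebra. Your attention to the forced order of the two steps and to the naturality in $m$ of the isomorphism from part (3) (induced by the canonical projections $A \to A_n$, hence compatible with the bonding maps $B_{m+1}\to B_m$) fills in the only details the paper leaves implicit.
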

In particular, if two homotopy projective limits $A=\hoprojlim A_n$ and $B=\hoprojlim B_m$ of $G$-$\Cst$-algebras are $\KK^G$-equivalent, then we get a pro-isomorphism of projecctive systems $\{ A_n \}_n \to \{ B_m \}_m$ in $\Kas ^G$.

\subsection{The Kasparov category}
\begin{defn}
We write $\sigma \Kas^G$ for the Kasparov category of $\sigma$-$G$-$\Cst$-algebras i.e.\ the additive category whose objects are separable $\sigma$-$G$-$\Cst$-algebras, morphisms from $A$ to $B$ are $\KK^G(A,B)$ and composition is given by the Kasparov product. 
\end{defn}

Note that the inclusion $\GCsep{G} \subset \sigma \GCsep{G}$ induces that $\Kas ^G$ is a full subcategory of $\sigma \Kas ^G$. Additional structures of $\Kas ^G$ such as tensor products, crossed products and countable direct sums are extended on $\sigma \Kas ^G$. Moreover the category $\Kas ^G$ has countably infinite direct products.

\begin{thm}[Theorem 2.2 of \cite{MR1656818}, Satz 3.5.10 of \cite{Bonkat}]\label{thm:univ}
The category $\sigma \Kas^G$ is an additive category that has the following universal property: there is the canonical functor $\KK^G:\sigma \mathfrak{C}^*\mathfrak{sep} \to \sigma \Kas^G$ such that for any $\Cst$-stable, split-exact, and homotopy invariant functor $F:\sigma \mathfrak{C}^*\mathfrak{sep} \to \mathfrak{A}$ there is a unique functor $\tilde{F}$ such that the following diagram commutes.
\[ 
\xymatrix{
\sigma \mathfrak{C}^*\mathfrak{sep}^G \ar[r] \ar[d] & \mathfrak{A}\\
\sigma \Kas^G \ar@{.>}[ru]}
\]
\end{thm}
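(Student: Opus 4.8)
The plan is to adapt the Higson--Cuntz characterization of $\KK$-theory (carried out in the equivariant setting by Thomsen \cite{MR1656818}) to $\sigma$-$\Cst$-algebras, using the Cuntz picture recorded above. On objects there is nothing to do: one sets $\tilde F(A):=F(A)$, so the triangle commutes on objects by fiat. The whole content is to extend $F$ to the $\KK^G$-morphisms, i.e.\ to define a homomorphism $\tilde F:\KK^G(A,B)\to \Hom_{\mathfrak A}(F(A),F(B))$ compatible with the Kasparov product, and to see that it is the unique such extension to an additive functor.

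The technical engine is the identification $\KK^G(A,B)\cong [q_sA,B\otimes \Kop_G]^G$ together with the fact that $\pi_A=[\pi_0]$ is a $\KK^G$-equivalence. First I would prove the Cuntz--Higson lemma in the present setting: any $\Cst$-stable, split-exact, homotopy-invariant $F$ sends the corner embedding $s_A:A\to A\otimes\Kop_G$ and the quotient $\pi_0:q_sA\to A\otimes\Kop_G$ to isomorphisms. Stability gives invertibility of $F(s_A)$; invertibility of $F(\pi_0)$ is Cuntz's argument \cite{MR899916}, applying split-exactness to the split extension $0\to q_sA\to Q(A\otimes\Kop_G)\to A\otimes\Kop_G\to 0$ (the splitting being an inclusion of one free factor) together with homotopy invariance. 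Granting this, a class $x\in\KK^G(A,B)$ represented by an equivariant $\ast$-homomorphism $\varphi:q_sA\to B\otimes\Kop_G$ is sent to
\[
\tilde F(x):=F(s_B)^{-1}\circ F(\varphi)\circ F(\pi_0)^{-1}\circ F(s_A)\colon F(A)\to F(B).
\]

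Next I would check that $\tilde F$ is well defined, additive, and functorial. Well-definedness is immediate from homotopy invariance, since homotopic representatives $\varphi$ induce the same $F(\varphi)$ and every $\KK^G$-class is precisely a homotopy class of such maps. Additivity on each hom-group is Cuntz's additivity argument, a formal consequence of split-exactness and stability once the sum of two quasihomomorphisms is realized through a block embedding into $2\times 2$ matrices. Compatibility with composition reduces to the fact that, under the Cuntz picture, the Kasparov product corresponds to composition of the representing $\ast$-homomorphisms (via the isomorphism $[q_sA\otimes\Kop_G,q_sB\otimes\Kop_G]^G\to\KK^G(A,B)$ recorded above), together with functoriality of $F$. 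Uniqueness is forced: $\KK^G$ is the identity on objects and sends $\ast$-homomorphisms to their classes, so the value of any extension on the classes $[\varphi]$, $[\pi_0]$, $[s_A]$ is prescribed; since these generate all of $\KK^G$ under composition and inversion of the invertible classes $\pi_A$ and $s_A$, the additive extension $\tilde F$ is unique.

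The main obstacle is the Cuntz--Higson lemma, namely that split-exactness and stability together force $F$ to be invariant under Cuntz (quasihomomorphism) equivalence and to invert $\pi_0$. In the $\sigma$-$\Cst$ setting this rests on the algebraic apparatus of free products, the algebras $qA$ and $q_sA$, the equivalence $\pi_A$, and the stabilization isomorphism all behaving as in the separable $\Cst$-case. These are exactly the facts secured by the stabilization theorem (Theorem \ref{thm:stab}) and the Cuntz-picture properties listed above, so once split-exactness is available for the relevant semisplit extensions of $\sigma$-$\Cst$-algebras the classical argument of Cuntz and Thomsen transports essentially verbatim.
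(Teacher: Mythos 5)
Your proposal is correct and takes essentially the same approach as the paper: the paper's proof is exactly the one-line observation that the universal property follows from the Cuntz picture of the previous subsection (the algebras $q_sA$, the $\KK^G$-equivalence $\pi_A$, the identification $\KK^G(A,B)\cong [q_sA,B\otimes\Kop_G]^G$, and the fact that the Kasparov product becomes composition of representing homomorphisms), with the details delegated to Thomsen \cite{MR1656818} and Bonkat \cite{Bonkat}. The steps you spell out---inverting $F(s_A)$ and $F(\pi_0)$ via stability and split-exactness, the formula $\tilde F(x)=F(s_B)^{-1}\circ F(\varphi)\circ F(\pi_0)^{-1}\circ F(s_A)$, and uniqueness from the factorization of every class through classes of $\ast$-homomorphisms and their inverses---are precisely the classical Cuntz--Higson--Thomsen argument the paper invokes.
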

This follows from the Cuntz picture introduced in the previous subsection. 

A structure of the triangulated category on $\Kas ^G$ is introduced in \cite{MR2193334}. Let $S$ be the suspension functor $SA:=C_0(\real) \otimes A$ of $\Cst$-algebras. Roughly speaking, the inverse $\Sigma :=S^{-1}$ and the mapping cone exact sequence 
$$\Sigma B \to \mathrm{cone} (f) \to A \xra{f} B$$
determines a triangulated category structure of $\Kas ^G$. More precisely we need to replace the category $\Kas ^G$ with another one that is equivalent to $\Kas ^G$, whose objects are pair $(A,n)$ where $A$ is a separable $\sigma$-$G$-$\Cst$-algebra and $n \in \zahl$, morphisms from $(A,n)$ to $(B, m)$ are $\KK_{n-m}(A,B)$ and composition is given by the Kasparov product. In this category the functor $\Sigma : (A,n) \mapsto (A,n+1)$ is an category isomorphism (not only an equivalence) and $S \circ \Sigma =\Sigma \circ S$ are natural equivalent with the identity functor. A triangle $\Sigma (B,m) \to (C,l) \to (A,n) \to (B,m)$ is exact if there is a $\ast$-homomorphism from $A'$ to $B'$ and the isomorphism $\alpha$, $\beta$ and $\gamma$ such that the following diagram commute. 
\[ 
\xymatrix{
\Sigma B \ar[r] \ar[d]^{\Sigma \beta}_\cong & C \ar[r] \ar[d]^\gamma_\cong & A \ar[r] \ar[d] ^\alpha_\cong & B \ar[d]^\beta_\cong \\
\Sigma B' \ar[r] & \mathrm{cone} (f) \ar[r] & A' \ar[r]^f & B'.
}
\]
For simplicity of notation we use the same letter $\Kas ^G$ for this category.

\begin{thm}\label{thm:triang}
The category $\sigma \Kas^G$, with the suspension $\Sigma$ and exact triangles as above, is a triangulated category.
\end{thm}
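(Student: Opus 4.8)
The plan is to verify the axioms [TR0]--[TR4] for the suspension $\Sigma$ and the class of mapping cone triangles, following the argument of Meyer and Nest \cite{MR2193334} for $\Kas^G$. The essential point is that every structural ingredient used in their proof is now available for $\sigma$-$G$-$\Cst$-algebras by the results assembled in Appendix \ref{section:App}: the Cuntz picture $\KK^G(A,B) \cong [q_sA, B \otimes \Kop_G]^G$, the stabilization theorem (Theorem \ref{thm:stab}), the Kasparov technical theorem (Theorem \ref{thm:tech}), and the Puppe exact sequence. The additive structure of $\sigma \Kas^G$ and the fact that $\Sigma$ is a genuine automorphism are built into the formal shift category with objects $(A,n)$ introduced above, so they need no separate argument.

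First I would dispatch [TR0] and [TR1]. Closure of the class of exact triangles under isomorphism and exactness of the identity triangle are immediate from the definition. For [TR1], given a morphism $f \in \KK^G(A,B)$ the Cuntz picture represents it by an honest $G$-equivariant $\ast$-homomorphism $\varphi \colon q_sA \to B \otimes \Kop_G$; forming the mapping cone $\mathrm{cone}(\varphi)$ and transporting along the $\KK^G$-equivalences $q_sA \sim A$ and $B \otimes \Kop_G \sim B$ produces an exact triangle through $f$. That an arbitrary $\KK^G$-class can be realised by a $\ast$-homomorphism is precisely what makes the mapping cone applicable to every morphism, and this is the one place where the $\sigma$-$\Cst$-algebra machinery of the appendix is genuinely needed.

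Next I would treat the rotation axiom [TR2] and the fill-in axiom [TR3]. Rotation follows, exactly as in \cite{MR2193334}, from the Puppe exact sequence together with the standard $G$-homotopy equivalence identifying an iterated mapping cone with a suspension, so that a mapping cone triangle remains exact under rotation in either direction. For [TR3], given a commutative square in $\sigma \Kas^G$ whose horizontal arrows underlie mapping cone triangles, I would lift the two vertical morphisms to $\ast$-homomorphisms after the replacements $A \leadsto q_sA$ and $B \leadsto B \otimes \Kop_G$, and then produce the third vertical arrow by functoriality of the mapping cone, as in the non-$\sigma$ case.

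The hard part will be the octahedral axiom [TR4]. Given composable $\ast$-homomorphisms I would build the octahedron out of the iterated mapping cones and establish the required exactness by exhibiting an explicit $G$-equivariant homotopy equivalence between the two relevant double mapping cones. This is the most delicate bookkeeping, but it is purely homotopy-theoretic and uses no estimates beyond those already packaged in Theorem \ref{thm:stab} and Theorem \ref{thm:tech}; consequently the argument of \cite{MR2193334} transfers verbatim once the representability of $\KK^G$-classes by $\ast$-homomorphisms is in hand.
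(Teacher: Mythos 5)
Your proposal is correct and matches the paper's approach: the paper omits the detailed verification and simply observes that the proof of Appendix 1 of \cite{MR2193334} for $\Kas^G$ carries over verbatim, precisely because the Cuntz picture $\KK^G(A,B) \cong [q_sA, B\otimes \Kop_G]^G$ (built on Theorem \ref{thm:stab} and Theorem \ref{thm:tech}) is available for $\sigma$-$G$-$\Cst$-algebras, which is exactly the representability-by-$\ast$-homomorphisms point you identify as the crux. Your axiom-by-axiom outline is a faithful expansion of that same argument rather than a different route.
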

We omit the proof. Actually, the same proof as for $\Kas^G$ given in Appendix 1 of \cite{MR2193334} works since we have the Cuntz picture of equivariant $\KK$-theory introduced in the previous subsection. 

\subsection*{Acknowledgement}
The authors would like to thank their supervisor Yasuyuki Kawahigashi for his support and encouragement. They also would like to thank Georges Skandalis and N. Christopher Phillips for suggesting initial ideas and Kang Li for helpful comments.  
The first author wishes to thank the KU Leuven, where the last part of this paper was written, for the invitation and hospitality. This work was supported by Research Fellow of the JSPS (No.\ 26-2598 and No.\ 26-7081) and the Program for Leading Graduate Schools, MEXT, Japan.

\bibliographystyle{alpha}
\bibliography{bibABC,bibDEFG,bibHIJK,bibLMN,bibOPQR,bibSTUV,bibWXYZ,arXiv,AtiyahSegal}
\end{document}